\newcommand{\der}[2]{\frac{\partial #1}{\partial#2}}
\renewcommand*\env@matrix[1][*\c@MaxMatrixCols c]{%
 \hskip -\arraycolsep
 \let\@ifnextchar\new@ifnextchar
 \array{#1}}
\numberwithin{equation}{section}
\newtheorem{Theorem}{Theorem}[section]
\newtheorem*{Theorem*}{Theorem}
\newtheorem*{Theorem-5.11}{Theorem~\ref{classificationaffine}}
\newtheorem*{Theorem-4.2}{Theorem~\ref{Lie-Cartan}}
\newtheorem{Corollary}[Theorem]{Corollary}
\newtheorem{Lemma}[Theorem]{Lemma}
\newtheorem{Proposition}[Theorem]{Proposition}
 { \theoremstyle{definition}
\newtheorem{Definition}[Theorem]{Definition}

\newtheorem{Exemple}[Theorem]{Example}
\newtheorem{Remark}[Theorem]{Remark} }
\begin{document}
\allowdisplaybreaks

\newcommand{\arXivNumber}{2210.02251}

\renewcommand{\PaperNumber}{052}

\FirstPageHeading

\ShortArticleName{Single-Valued Killing Fields of a Meromorphic Affine Connection and Classification}

\ArticleName{Single-Valued Killing Fields of a Meromorphic\\ Affine Connection and Classification}

\Author{Alexis GARCIA}

\AuthorNameForHeading{A.~Garcia}

\Address{Laboratoire de Math\'ematiques Blaise Pascal, Universit\'e Clermont Auvergne, France}
\Email{\href{mailto:alexis.garcia@uca.fr}{alexis.garcia@uca.fr}}

\ArticleDates{Received November 09, 2022, in final form July 17, 2023; Published online July 27, 2023}

\Abstract{We give a geometric condition on a meromorphic affine connection for its Killing vector fields to be single valued. More precisely, this condition relies on the pole of the connection and its geodesics, and defines a subcategory. To this end, we use the equivalence between these objects and meromorphic affine Cartan geometries. The proof of the previous result is then a consequence of a more general result linking the distinguished curves of meromorphic Cartan geometries, their poles and their infinitesimal automorphisms, which is the main purpose of the paper. This enables to extend the classification result from [Biswas~I., Dumitrescu~S., McKay~B., \textit{\'Epijournal G\'eom. Alg\'ebrique} \textbf{3} (2019), 19, 10~pages, arXiv:1804.08949] to the subcategory of meromorphic affine connection described before.}

\Keywords{meromorphic affine connections; Killing vector fields; infinitesimal automorphisms; Cartan geometries}

\Classification{53A15; 53B05; 53B15; 53C05; 53C15; 57N16}

\section{Introduction}

\subsection{Geometric structures and Cartan geometries}
A class of smooth geometric structures on real manifolds, or holomorphic geometric structures on complex manifolds (see \cite[p.~65]{Gromov} for a modern definition) is obtained as infinitesimal versions of a model geometry. As an example in the smooth category, the notion of Riemannian metric is obtained as an infinitesimal version of Euclidean geometry, and affine connections as infinitesimal versions of affine geometry. These two classes of geometric structures were intensively studied, in particular by Riemann who initiated with Gauss the \textit{Riemannian geometry}.

In the two examples above, we remark that the group of global \textit{automorphisms} of the model geometry, i.e., global transformations preserving the characteristics of this geometry, acts transitively on the base space, namely $\mathbb{R}^n$. We say that the geometric structure corresponding to the model geometry is \textit{homogeneous}. This property was later proposed by Klein to give 	a definition of a geometry, in his famous program aimed at classifying all the geometries. A \textit{Klein geometry} is a couple $(G,P)$ formed by a Lie group $G$, seen as the group of global automorphisms of the geometry, and a closed Lie subgroup $P$ seen as the subgroup of isotropy at a fixed point of the space $G/P$.

Geometric structures underlying a Klein geometry are of diverse kinds. A general fact is that the model space $G/P$ is endowed with a \textit{$Q$-structure} where $Q$ is a linear subgroup naturally associated with $P$. The geometric structures obtained in this way are of order one, but some Klein geometries define higher order geometric structures. For example, for the affine Klein geometry, where $G$ is the affine group of $\mathbb{R}^n$ and $P$ the linear subgroup, the group $G$ is exactly the group of global automorphisms for the canonical flat affine connection on $\mathbb{R}^n$. In general, the geometric structure underlying $(G,P)$ is defined using the $P$-principal bundle $G\longrightarrow G/P$ and the Maurer--Cartan form $\omega_G$ of $G$.

In a series of papers, in particular \cite{Cartan2}, Cartan described affine connections as infinitesimal versions of the affine Klein geometry, and proposed to generalize this principle to any Klein geometry to obtain a \textit{Cartan geometry}. The formalism used nowadays came, in affine case, from the works of Ehresmann, who gave a purely geometric definition of an affine connection in terms of a principal bundle, a \textit{principal connection} \cite[p.~154]{Erhesmanh}, and a soldering form to give a geometric meaning to the principal bundle. There is also an equivariant definition which was proposed by Atiyah in \cite[p.~188]{Atiyah}, and which is useful to extend some results in the meromorphic setting (see Section~\ref{sectionequivalence}). The definition of a Cartan geometry on $M$ modelled on $(G,P)$ that will be adopted in this article is the following: a couple $(E,\omega)$ formed by a $P$-principal bundle over $M$ and a $\mathfrak{g}$-valued equivariant one-form on $E$ mimicking the infinitesimal properties of the Maurer--Cartan form of $G$.

 In this way, the principle constructing a geometric structure on $G/P$ from a Klein geometry $(G,P)$ can be generalized to Cartan geometries, except that it produces non-homogeneous geometric structures in general: the global \textit{automorphisms} of the Cartan geometry do not act transitively on the base manifold.

 \subsection{Classification of quasi-homogeneous geometric structures}
 A natural question then, going back to the work of Riemann, Hopf and Killing for Riemannian metrics (see, for example, \cite{Wolf}), is to classify \textit{locally homogenous} geometric structures, i.e., for which the \textit{infinitesimal automorphisms} span the tangent space of the base manifold at any point. As an example, it is well known that any locally homogeneous and complete Riemannian metric on a simply connected manifold is homogeneous.

 The above question is more relevant in the holomorphic category, for two principal reasons. First, the existence of a holomorphic geometric structure on a complex compact manifold gives some restrictions on its geometry or its topology. Secondly, local homogeneity is sometimes deduced from the complex geometry of the base manifold, at least on an open dense subset. These two reasons are well illustrated by the holomorphic version of Riemannian metrics, i.e., holomorphic fields of nondegenerate bilinear forms on the tangent spaces of a complex manifold (see, for example, \cite[definition on p.~1663]{DumitrescuMetriques} and \cite[definition on p.~210]{LeBrun}). Indeed, on a general complex manifold, such an object gives a trivialisation of some power of the canonical bundle. On a~compact complex surface, the curvature of such a object is a constant function, implying local homogeneity.

 In dimension two, we can also mention the work by Inoue, Kobayashi and Ochiai in \cite{InoueKobayashi}. Using the vanishing of the first two Chern classes of a complex compact surface in presence of a holomorphic affine connection and the Enriques--Kodaira classification, they gave a complete classification of such objects. In particular, any compact complex surface admitting a~holomorphic affine connection admits a flat holomorphic affine connection, which is thus locally homogeneous.

In \cite{McKay}, McKay showed that the existence of an arbitrary holomorphic Cartan geometry on a complex K\"ahler manifold imposes relations on its Chern classes. In a common paper with Dumitrescu \cite{BD}, they proved that a simply connected compact complex manifold, with \textit{algebraic dimension} zero (i.e., whose meromorphic functions are the constants), does not bear any holomorphic affine connection. This results is stated for a vast class of models, called of \textit{algebraic type} (see \cite[p.~1]{Dumitrescu1}).

 Dumitrescu gave in \cite{Dumitrescu1} a result in arbitrary dimension which implies that on compact complex manifolds with only constant meromorphic functions, any holomorphic Cartan geometry must be \textit{quasi-homogeneous}, i.e., locally homogeneous on an open dense subset. This was used in \cite{BDM} by Biswas and the two previous authors to improve the above result.

 \begin{Theorem}\label{theoremBDM} Compact complex manifolds, whose meromorphic functions are constants, bearing a holomorphic Cartan geometry of algebraic
type $($see above$)$ have infinite fundamental group.
 \end{Theorem}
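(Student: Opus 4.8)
The plan is to argue by contradiction: suppose $M$ is a positive-dimensional compact complex manifold, all of whose meromorphic functions are constant, carrying a holomorphic Cartan geometry $(E,\omega)$ modelled on some $(G,P)$ of algebraic type, and suppose moreover that $\pi_1(M)$ is finite. First I would pass to the universal cover $\pi\colon\widehat M\to M$: since the cover is finite, $\widehat M$ is again compact, it is simply connected, and $(E,\omega)$ pulls back to a holomorphic Cartan geometry on $\widehat M$ with the same model. The first point to check is that $\widehat M$ still has algebraic dimension zero. Writing $\Gamma=\pi_1(M)$, pullback identifies $\mathrm{Mer}(M)$ with $\mathrm{Mer}(\widehat M)^{\Gamma}$, so $\mathrm{Mer}(\widehat M)$ is algebraic of finite degree (dividing $|\Gamma|$) over $\mathrm{Mer}(M)=\mathbb C$; as $\mathbb C$ is algebraically closed this forces $\mathrm{Mer}(\widehat M)=\mathbb C$. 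Hence we may assume from the outset that $M$ is simply connected.

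Next I would invoke Dumitrescu's theorem from \cite{Dumitrescu1}: on a compact complex manifold with only constant meromorphic functions, a holomorphic Cartan geometry is quasi-homogeneous, i.e., its local infinitesimal automorphisms span the tangent space at every point of a dense open subset $U\subset M$. The central step is then to upgrade this to genuine local homogeneity on all of $M$, and this is where the algebraic-type hypothesis is essential. The curvature of $(E,\omega)$ is a $P$-equivariant map $\kappa\colon E\to W$ into a finite-dimensional $P$-module $W$; since the model is of algebraic type the relevant transformation groups act algebraically on $W$, so by Rosenlicht's theorem there are finitely many rational $P$-invariants on $W$ whose pullbacks along $\kappa$ descend to meromorphic functions on $M$. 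Because $\mathrm{Mer}(M)=\mathbb C$ these are constant, so $\kappa$ has constant orbit type; by the standard infinitesimal criterion for local homogeneity this means $(E,\omega)$ is locally homogeneous at every point of $M$, and the sheaf of local Killing fields is locally constant of rank at least $\dim M$.

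Then I would globalize. Since $M$ is simply connected the local Killing fields have trivial monodromy and extend to a finite-dimensional Lie algebra $\mathfrak s$ of global holomorphic vector fields; as $M$ is compact these are complete, so $\mathfrak s$ integrates to a holomorphic action on $M$ of a connected complex Lie group $S$. Local homogeneity everywhere makes the $S$-orbits open, hence closed, hence (with $M$ connected) all of $M$, so $M=S/H$ is a compact, simply connected, homogeneous complex manifold. Finally, such a manifold with algebraic dimension zero must be a point: by the structure theory of compact homogeneous complex manifolds its Tits fibration presents $M$ as a holomorphic fibre bundle over a generalised flag manifold with compact parallelisable fibre; a positive-dimensional flag manifold is projective and pulls non-constant meromorphic functions back to $M$, while a compact simply connected parallelisable manifold is a compact complex Lie group, hence a complex torus, hence a point. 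Either way $\dim M=0$, contradicting the standing assumption; therefore $\pi_1(M)$ is infinite.

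The main obstacle is the middle step — converting quasi-homogeneity into homogeneity on all of $M$. Controlling the degeneracy locus $M\setminus U$, and in particular showing that the curvature orbit type is genuinely constant rather than merely generically so (so that the Killing-field sheaf is locally constant everywhere), is precisely what forces one to use that the model is of algebraic type — so that pseudo-group closures are algebraic and Rosenlicht's theorem applies — together with the vanishing of the algebraic dimension. By comparison, the reduction to the universal cover and the concluding classification of compact simply connected homogeneous complex manifolds are routine.
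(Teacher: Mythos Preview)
Your overall strategy --- reduce to the simply connected cover, produce global Killing vector fields, and derive a contradiction from the resulting homogeneous-like structure --- is in the right spirit, but the route sketched in the paper (following \cite{BDM}) is different from yours, and the central step of your proposal has a genuine gap.

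The paper's sketch runs as follows. Local Killing fields of a holomorphic Cartan geometry form a local system on $M$, so on the simply connected cover every germ of Killing field extends globally. Dumitrescu's theorem gives $n$ independent germs at some point; these yield $n$ global holomorphic Killing fields, generically independent. One does \emph{not} obtain, and does not attempt to obtain, local homogeneity at every point. Instead one shows that among the global Killing fields there is a commuting family still spanning generically, so a connected complex \emph{abelian} Lie group acts with an open dense orbit. A detailed analysis of this situation forces the Cartan geometry to be flat; the developing map from the compact simply connected $M$ to the model $G/P$ then gives the contradiction.

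Your argument tries to bypass the abelian-group/flatness step by promoting quasi-homogeneity to local homogeneity on all of $M$ via Rosenlicht applied to the curvature $\kappa\colon E\to W$. This is where the gap lies. First, Rosenlicht provides rational $P$-invariants separating \emph{generic} orbits; their constancy only says the image of $\kappa$ sits in the closure of a single $P$-orbit, not that $\kappa$ has constant orbit type. Second, and more fundamentally, even if $\kappa$ genuinely took values in a single $P$-orbit this would \emph{not} imply local homogeneity: two germs of Cartan geometry are locally isomorphic only when their curvature functions together with all iterated covariant derivatives agree up to the $P$-action. Dumitrescu's argument in \cite{Dumitrescu1} handles this full jet and delivers local homogeneity on an open dense set, not everywhere; there is no ``standard infinitesimal criterion'' that upgrades constancy of the zeroth-order invariants to local homogeneity at every point. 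Without local homogeneity everywhere, the $S$-action you construct need not be transitive --- the $n$ global Killing fields may, and typically will, become dependent along a divisor --- and your Tits-fibration endgame never gets off the ground. The paper's approach avoids exactly this issue by working with an open dense orbit of an abelian group and arguing towards flatness rather than transitivity.
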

Two important facts are used in the proof. First, it is proved that any germ of infinitesimal automorphism of the Cartan geometry is the germ of a global infinitesimal automorphism: this is a generalization of a result by Nomizu \cite{Nomizu} for analytic Riemannian metrics, and follows from the fact that the former objects form a local system on $M$. Hence, using the result by Dumitrescu mentioned above, there are $n=\dim(M)$ independent germs of infinitesimal automorphisms at some point of $M$, extending as a family of $n$ global holomorphic vector fields which are infinitesimal automorphisms. Next, it is proved that there exists a commuting family of vector fields with the previous property. Hence, there exists an action of a complex abelian Lie group~$L$ with an open dense orbit in $M$. The conclusion follows from detailed study of the geometry of such manifolds $M$, which implies that the Cartan geometry is flat.

\subsection{Results of the paper}
In this paper, we consider the meromorphic generalization of the holomorphic geometric structures, in particular the \textit{meromorphic affine connections}. In the meromorphic category, the two above facts no longer stand: infinitesimal automorphisms could be multivalued (see \cite[Example~3.8]{BlazquezCasale}). Moreover, meromorphic single valued infinitesimal automorphism may not have a well defined flow at some point of the pole.

 We give a sufficient condition on some meromorphic Cartan geometries to recover the first fact. Let us explain briefly the condition. In the remainder of this paper, a \textit{pair} is a couple~$(M,D)$ where $M$ is a complex manifold and $D$ a divisor on $M$. The holomorphic $P$-principal bundle~$E$ of a meromorphic Cartan geometry $(E,\omega_0)$ on a pair $(M,D)$ (see Definition~\ref{Cartan}), modelled on a~complex Klein geometry $(G,P)$, comes equipped with holomorphic foliations (possibly singular) $\mathcal{T}_A$ whose leaves are the \text{$A$-distinguished curves} (see definition in Section~\ref{sectionspiral}, or \cite[Definition~4.16]{Sharpe} in the non-singular setting). In the case of the affine model, it is natural to consider these leaves because their projections $\Sigma$ on the base manifold $M$ are exactly the spirals of the corresponding meromorphic affine connection (see \cite[p.~344]{Sharpe}).

 Moreover, there is a well-known result in Riemannian geometry stating that any Killing vector field $X$ for a Riemannian metric $g$ is a Jacobi field, i.e., for any geodesic $\Sigma$ of $\nabla$, the scalar product $g(\gamma'(t),X(\gamma(t)))$ is constant along $\gamma$. We can translate the two objects in terms of the Cartan geometry $(E,\omega)$ corresponding to the Levi-Civita connection of $g$. The proof that any Killing vector field is single-valued can then be recovered from the fact that $(E,\omega)$ is torsionfree and from the structure of the Lie algebra $\mathfrak{g}$ of the complex Euclidean group~$G$.

In the meromorphic setting, and for an arbitrary model, we prove the result below. It involves the following objects: the sheaf $\mathcal{V}$ of holomorphic functions on $E$ with values in $\mathfrak{g}$, for any $A\in \mathfrak{g}$ (resp.\ $\mathcal{V}\bigl(*\tilde{D}\bigr)$ for the meromorphic functions with poles at $\tilde{D}$, the inverse image of $D$), the subsheaf $\mathcal{V}_A$ of functions with values in $\mathbb{C}A$, $\pi_{\mathcal{V}/\mathcal{V}_A}$ the corresponding projection, and $\mathcal{K}$ the subsheaf of $\mathcal{V}$ formed by the images of the local infinitesimal automorphisms of the meromorphic Cartan geometry $(E,\omega_0)$.

\begin{Theorem-4.2}\label{Lie-Cartan-Int}
Let $(G,P)$ be a complex Klein geometry, and $(M,D)$ be a pair with $\dim(M)=\dim(G/P)$. Let $(E,\omega_0)$ be a meromorphic $(G,P)$-Cartan geometry on $(M,D)$, $\mathcal{V}=\mathcal{O}_E\otimes \mathfrak{g}$ and~$\mathcal{K}$ the sheaf defined as in \eqref{K}. Let $x_0 \in D$ belonging to the smooth part of an unique irreducible component $D_\alpha$, and $\tilde{D}_\alpha = p^{-1}(D_\alpha)$.

Suppose that there exist $A\in \mathfrak{g}\setminus \mathfrak{p}$ and a $A$-distinguished curve $\tilde{\Sigma}$ for $(E,\omega_0)$ such that $\tilde{\Sigma}\cap \tilde{D}_\alpha= \{e_0\}$ for some point $e_0$ in the fiber of $x_0$.

Denote by $\mathcal{T}_A$ the distinguished foliation of $E$ \eqref{definitionTA}, as well as $\mathcal{V}_A=\Phi_{\omega_0}(\mathcal{T}_A)$ $($see \eqref{Phi}$)$. Then there exists a neighborhood $U$ of $x_0$ with the following properties:
\begin{itemize}\itemsep=0pt
\item[$(1)$] Let $s$ be a section of $\mathcal{K}$ on an open subset $V\subset p^{-1}(U\setminus D)$. Then the class $[s]_{\mathcal{V}/\mathcal{V}_A}$ of $s$ in $TE/\mathcal{T}_A$ extends as a $($single-valued$)$ section of $TE/\mathcal{T}_A$ on $p^{-1}(U\setminus D)$.
 \item[$(2)$] The image under $\Phi_{\omega_0}$ $($see \eqref{Phi}$)$ of the section defined in $(1)$ is the restriction of a section of $\mathcal{V}/\mathcal{V}_A(* D)$ over $p^{-1}(U)$.
\item[$(3)$] Suppose moreover that $(E,\omega_0)$ is holomorphic branched on $(M,D)$. Then the above section lies in $\mathcal{V}/\mathcal{V}_A\big(p^{-1}(U)\big)$. \end{itemize}
\end{Theorem-4.2}

 The meromorphic Cartan geometries satisfying this condition of the above theorem for a~generic point $x_0\in D$ on the pole are said to be \textit{strongly spiral} (see Definitions~\ref{spiral} and~\ref{strongtaucondition}). This theorem implies that Killing fields of meromorphic affine connections which are strongly spiral are single valued and meromorphic. If we restrict ourselves to the subcategory of \textit{branched holomorphic affine connections}, i.e., those arising from \textit{branched holomorphic Cartan geometries} (see~\cite{BD}), we obtain that the Killing vector fields may be seen as holomorphic sections of submodule $\mathcal{E}\subset TM(* D)$ satisfying $TM\subset \mathcal{E}$. Using this fact and some results in complex geometry, we obtained the following partial generalization of Theorem~\ref{theoremBDM}.

\begin{Theorem-5.11}\label{classificationaffine-Int} Let $M$ be a compact complex manifold with finite fundamental group, and whose meromorphic functions are constants. Then $M$ does not bear any spiral branched holomorphic affine connection.
\end{Theorem-5.11}

\subsection{Plan of the paper}
The plan of the paper is as follow. In Section~\ref{section1}, we recall the dictionary between locally free modules of finite rank and vector bundles, the corresponding meromorphic sections, and recall the definition of Atiyah's exact sequence associated with a principal bundle. In Section~\ref{section2}, we introduce meromorphic Cartan geometries and the holomorphic vector bundles naturally associated to these objects. In Section~\ref{killparab}, we prove Theorem~\ref{Lie-Cartan} and deduce from this result sufficient conditions for two classes of regular meromorphic \textit{parabolic} geometries (see Section~\ref{sectionparabolique}) to have single-valued infinitesimal automorphisms. We say in this case that the geometry satisfies the \textit{extension property} (see Definition~\ref{extensionproperty}). In Section~\ref{section4}, we prove the equivalence between meromorphic affine Cartan connections and meromorphic affine connections, and we introduce the \textit{spiral connections} (see Definition~\ref{tauconnection}). We moreover restricts ourselves to a subcategory of meromorphic affine connection, namely the \textit{branched} affine connections (see Definition~\ref{branchedaffine}). This is because in this subcategory, the \textit{spiral connections} are induced by a meromorphic affine Cartan geometry satisfying the sufficient condition described before. We then use the extension property to prove the classification result Theorem~\ref{classificationaffine}. Finally, in the last section, we discuss the genericity of the spiral connections, and illustrate~Theorem~\ref{classificationaffine} by examples in any dimension.

 \section{Preliminaries and notations}\label{section1}
This preliminary section is devoted to recall the notion of meromorphic connections on a locally free module, and the meromorphic version of Atiyah's exact sequence associated with a principal bundle. This enables us to extends the equivalence proved by C. Ehresmann in \cite{Erhesmanh} between principal connections and linear connections to the meromorphic setting.

\subsection{Locally free modules and meromorphic connections}
Let $(M,D)$ be a \textit{pair}, i.e., a complex manifold $M$ equipped with a divisor $D$, we denote by $\mathcal{O}_M$ the sheaf of holomorphic functions on $M$ and $\mathcal{M}_M$ the sheaf of meromorphic functions on it. In order to write statements about meromorphic objects with poles at $D$, we may use the sheaf $\mathcal{O}_M(* D)$ of meromorphic functions with poles supported on the irreducible components~$D_\alpha$ of~$D=\sum_{\alpha} n_\alpha D_\alpha$ (see \cite[p.~17]{Sabbah}). Let $\mathcal{L}$ be a coherent $\mathcal{O}_M$-module. Then we can consider the sheaf
 \begin{gather*}
 \mathcal{L}(* D) = \mathcal{O}_M(* D)\otimes_{\mathcal{O}_M} \mathcal{L} \end{gather*}
 of meromorphic sections of $\mathcal{L}$ with poles supported at the irreducible components of $D$. The \textit{order} $\operatorname{ord}^\mathcal{L}_D(s)$ at $D$ of a section $s$ of $\mathcal{L}(* D)$ defined on an open subset $U\subset M$ is the greatest integer $d\in \mathbb{Z}$ such that $s$ is also a section of $\mathcal{L}(-d D)$ on $U$.
\begin{Definition}\label{connection} A \textit{meromorphic connection} on $(M,D)$ is a couple $(\mathcal{V},\nabla)$ where $\mathcal{V}$ is a locally free $\mathcal{O}_M$-module of finite rank, and $\nabla$ is a morphism of $\mathbb{C}$-sheaves from $\mathcal{V}(* D)$ to $\Omega^1_M \otimes \mathcal{V}(* D)$ satisfying the Leibniz identity $\nabla(fs) = {\rm d}(f) s + f\nabla(s)$ for any $s\in \mathcal{V}(U)$ and $f\in \mathcal{O}_M(* D)(U)$ ($U$ is an open subset of $M$).
\end{Definition}
If $(\mathcal{L},\nabla)$ and $(\mathcal{L}',\nabla)$ are two meromorphic connections such that $\mathcal{L}= \bigoplus_{i=1}^{r} \mathcal{O}_M s_i$, $\mathcal{L}'= \bigoplus_{i=1}^{r} \mathcal{O}_M t_i$ and $t_i = \sum_{j=1}^{r} q_{ji} s_j$ for a meromorphic matrix $Q$ on $M$, then the matrices $A$ and $A'$ respectively associated to the basis $(s_i)_{i=1,\dots,r}$ and $(t_i)_{i=1,\dots,r}$ are linked by the gauge-transformation formula (see \cite[p.~29]{Sabbah})
\begin{gather*}
 A' = Q^{-1}{\rm d}Q + Q^{-1}AQ,
\end{gather*}
where ${\rm d}$ stands for the de Rham derivative.

A \textit{meromorphic affine connection} on $(M,D)$ is a meromorphic connection $\nabla$ on $TM$ with poles supported at $D$. The \textit{torsion} of a meromorphic affine connection $\nabla$ on $(M,D)$ is the meromorphic section $T_\nabla$ of $\Omega^1_M \otimes \operatorname{End}(TM)$ defined by
\begin{gather}\label{torsionaffine}
 T_\nabla(X)(Y) = \nabla_X(Y) - \nabla_Y(X) - [X,Y]_{TM}. \end{gather}

Now, let describe the categories associated with the above objects. Let $M$, $M'$ be two complex manifolds and $f\colon M\longrightarrow M'$ be a holomorphic map. We denote by \begin{gather*}
 f^{-1}\mathcal{O}_{M'} \end{gather*}
 the pullback in the sheaf theoretic sense. This is a sheaf of algebras over the constant sheaf $\underline{\mathbb{C}}_M$. Then $\mathcal{O}_M$ is naturally a $f^{-1}\mathcal{O}_{M'}$-algebra through the $\underline{\mathbb{C}}_M$-algebras morphism
\begin{align*}
f_\# \colon\ f^{-1}\mathcal{O}_{M'} & \longrightarrow \mathcal{O}_M,\nonumber \\
 s & \mapsto s\circ f.
\end{align*}
 Hence, any $f^{-1}\mathcal{O}_{M'}$-module defines a $\mathcal{O}_M$-module obtained by tensorizing with $\mathcal{O}_M$ in the category of $f^{-1}\mathcal{O}_{M'}$-modules.

Recall that there is a well-known equivalence between the category of holomorphic vector bundles $V$ of rank $r\geq 1$ over complex manifolds and locally free $\mathcal{O}_M$-modules of the same rank obtained by considering the sheaf of local holomorphic sections of $V$ (see \cite[Proposition~4.1]{Sabbah}). Let $\hat{\Psi}\colon V_1 \longrightarrow V_2$ be an isomorphism of vector bundles $V_1$, $V_2$ over two complex mani\-folds~$M$,~$M'$. Denote by $\mathcal{E}_1$, $\mathcal{E}_2$ the associated sheaves of sections, and $f\colon M \longrightarrow M'$ the isomorphism of complex manifolds covered by $\hat{\Psi}$. Then the arrow obtained as the image of $\hat{\Psi}$ under this equivalence of categories is the isomorphism $\Phi$ of $\mathcal{O}_M$-modules defined by
 \begin{align}
 \Phi \colon\ \mathcal{E}_1 & \longrightarrow \mathcal{O}_M\otimes_{f^{-1}\mathcal{O}_{M'}} f^{-1}\mathcal{E}_2,\nonumber\\
 s& \longrightarrow \hat{\Psi} \circ s \circ f^{-1},\label{isovector}
 \end{align}
 where $s$ stands for a local section of $\mathcal{E}_1$. \begin{Definition}\label{isovect}A couple $(f,\Phi)$ as above will be called a \textit{isomorphism of vector bundles} between~$\mathcal{E}_1$ and $\mathcal{E}_2$. More generally, we define a \textit{isomorphism of meromorphic bundles} by replacing the sheaves of holomorphic sections by the corresponding of meromorphic sections with poles at a divisor $D$ of $M$ and $D'$ of $M'$. \end{Definition}

The following definition of the pullback of a meromorphic connection is a particular case of the construction of inverse images for $D$-modules (see \cite[p.~21]{Hotta}). Let $\nabla'$ be a meromorphic connection on the $\mathcal{O}_{M'}$-module $\mathcal{E}'$ with poles at $D'$ and $f\colon M\longrightarrow M'$ be a morphism of complex manifolds. The dual of the differential ${\rm d}f\colon TM \longrightarrow \mathcal{O}_M \otimes f^{-1}TM'$ is a morphism of $\mathcal{O}_M$-modules $({\rm d}f)^* \colon \mathcal{O}_M \otimes f^{-1} \Omega^1_{M'} \longrightarrow \Omega^1_M$.

Now, let ${\rm d}_M$ and ${\rm d}_{M'}$ denote respectively the de Rham differentials on $\mathcal{O}_M$ and $\mathcal{O}_{M'}$. Since ${\rm d}_M \circ f_\# = ({\rm d}f)^* \circ f^{-1} {\rm d}_{M'}$ by definition of $f_\#$, we remark that the left composition
\[\overline{f^{-1}\nabla'}\colon\ f^{-1}\mathcal{E}' \longrightarrow \Omega^1_M \otimes \mathcal{O}_M \otimes f^{-1}(\mathcal{E}'(* D'))\]
 of the sheaf theoretic pullback $f^{-1}\nabla'$ with $({\rm d}f)^*\otimes {\rm Id}$ satisfies
 \begin{equation}\label{nablabar} \overline{f^{-1}\nabla'}(f_\#(g')s) = {\rm d}_M(f_\#(g')) \otimes s + g' \otimes \overline{f^{-1}\nabla'}(s) \end{equation}
 for any section $s$ of $f^{-1}\mathcal{E}'$ and $g'$ of $f^{-1}\mathcal{O}_{M'}$. Thus, it extends as a morphism of $\mathbb{C}_{M}$-modules
 \begin{equation}\label{pullbackeq} f^\star \nabla' \colon \ \mathcal{O}_M\otimes f^{-1}\mathcal{E}' \longrightarrow \Omega^1_M \otimes \big(\mathcal{O}_M\otimes f^{-1}\mathcal{E}'(* D')\big) \end{equation}
 satisfying the Leibniz rule on $M$.

\begin{Definition}\label{pullback}Let $f$ and $(\mathcal{E}',\nabla')$ as above. The pullback of $(\mathcal{E}',\nabla')$ is the pair $(\mathcal{E},\nabla)$, where $\mathcal{E}=\mathcal{O}_M \otimes f^{-1}\mathcal{E}'$ and $\nabla$ is the morphism defined as in \eqref{pullbackeq}. \end{Definition}

When $\mathcal{O}_M\otimes f^{-1}\mathcal{O}_{M'}(D') = \mathcal{O}_M(D)$ for some divisor $D$ on $M$, the pullback $(\mathcal{E},f^\star \nabla')$ is a~meromorphic connection on $(M,D)$. This the case, for example, when $f$ is a submersion, but we may find many counterexamples as the following example.

\begin{Exemple}\label{exemplegeodesicisotrope} Let $M'=\mathbb{C}^2$ and $D'=\{z_1=0\}$. Let $\mathcal{E}'=\mathcal{O}_{M'}^{\oplus 2}$ and $\nabla'$ be the meromorphic connection on $\mathcal{E}'$, with poles at $D'$, whose matrix in the canonical basis is $\frac{{\rm d}z_2}{z_1} \otimes {\rm Id}$.

Let $\gamma\colon \mathbb{C} \longrightarrow M'$ be the holomorphic curve defined by $\gamma(t)=(0,t)$. Then $\mathcal{O}_\mathbb{C} \otimes \gamma^{-1}\mathcal{E}'= \mathcal{O}_\mathbb{C}^{\oplus 2}$ and $\gamma^\star \nabla'$ is the null morphism. \end{Exemple}
\begin{proof} We have $\mathcal{O}_\mathbb{C} \otimes \gamma^{-1} \mathcal{O}_{M'}(D') = \underline{\{0\}}_{\mathbb{C}}$ because $\gamma(\mathbb{C})\subset D'$. Now, by definition of $\nabla'$, the morphism $\overline{\gamma^{-1}\nabla'}$ defined before \eqref{nablabar} maps $\gamma^{-1}\mathcal{E}'$ in $\Omega_\mathbb{C}^1 \otimes \mathcal{O}_\mathbb{C} \otimes \gamma^{-1}\big(\frac{1}{z_1} \mathcal{E}'\big)$. By the first remark, this is the trivial module. \end{proof}

We finish with the Riemann--Hilbert correspondence. A \textit{flat} meromorphic connection $\nabla$ on~$\mathcal{E}$ with poles at $D$ is a meromorphic connection such that the subsheaf of \textit{horizontal sections} $\ker(\nabla)$ on $M\setminus D$ defined by
\begin{gather*}
 \forall U\subset M\setminus D,\quad \ker(\nabla)(U) = \{ s\in \mathcal{E}(U)\, \text{s.t.}\, \nabla(s)=0\} \end{gather*}
 is a \textit{local system} (see \cite[Chapter~I, Theorem~2.17]{Deligne}).

We recall \cite[Chapter~I,  Theorem~2.17]{Deligne} that there is an equivalence of categories between the category of local systems of rank $r$ on $M\setminus D$ with arrows being the isomorphisms, and the category of representations $\rho \colon \pi_1(M\setminus D,x) \longrightarrow K$ (for any $x\in M\setminus D$, and $K$ is a $\mathbb{C}$-vector space of dimension $r$) with arrows being the isomorphisms of representations. Once a point $x\in M\setminus D$ is chosen, this equivalence is obtained by associating to any local system $\mathcal{K}$, the \textit{monodromy map} $\operatorname{Mon}^x(\mathcal{K})\colon \pi_1(M\setminus D,x) \longrightarrow \operatorname{Aut}(\mathcal{K}_x)$ (for the definition, see \cite[Chapter~I, Section~1.2]{Deligne}).

\subsection{Atiyah sequence of the frame bundle}\label{Atiyahframe}

We recall and extend the Atiyah exact sequence (see \cite[Theorem 1]{Atiyah}) in the meromorphic setting.

The \textit{frame bundle} of a locally free $\mathcal{O}_M$-module $\mathcal{E}$ of rank $r$ is the holomorphic ${\rm GL}_r(\mathbb{C})$-principal bundle $E\overset{p}{\longrightarrow} M$ whose fiber at $x\in M$ is the set of isomorphisms $\mathbb{C}^r \simeq \mathcal{E}(x)$. Here $\mathcal{E}(x)=\mathcal{E}_x /\mathfrak{m}_x$ stands for the fiber of $\mathcal{E}$ at $x$.

We recall that for any complex Lie group $P$ and a holomorphic $P$-principal bundle $E\overset{p}{\longrightarrow} M$, there is a notion of $P$-\textit{linearization} for a $\mathcal{O}_E$-module $\mathcal{V}$ (or of a $P$-\textit{equivariant} sheaf, see \cite[Definition 9.10.3]{Hotta}). This is a family $(\phi_b)_{b\in P}$ of isomorphisms $\phi_b\colon \mathcal{V} \simeq r_b^* \mathcal{V}$ (where $r_b$ is the right action of $P$) with nice properties. A $\mathcal{O}_E$-module equipped with a $P$-linearization is said to be $P$-\textit{equivariant}. In this context, there is an equivalence between the $P$-equivariant locally free $\mathcal{O}_E$-modules and the locally free $\mathcal{O}_M$-modules, and between the $P$-equivariant morphisms and the morphisms between the corresponding $\mathcal{O}_M$-modules (see \cite[Lemma, p.~3]{Lunts}). For any representation $\rho\colon P \longrightarrow {\rm GL}(\mathbb{V})$, and any holomorphic $P$-principal bundle $E\overset{p}{\longrightarrow} M$, we denote by\looseness=-1
\begin{gather}\label{representationmodule} E(\mathbb{V})
\end{gather}
 the $\mathcal{O}_M$-module associated with the $\mathcal{O}_E$-module $\mathcal{O}_E \otimes \mathbb{V}$, where the $P$-linearization $(\phi_b)_{b\in P}$ is given by $\phi_b = r_b^* \otimes \rho\big(b^{-1}\big)$. We call it the \textit{representation module} associated with $E$ and $\mathbb{V}$. For any isomorphism $\Psi\colon E\longrightarrow E'$ of holomorphic $P$-principal bundles covering $\varphi\colon M\longrightarrow M'$, the \textit{representation isomorphism of associated vector bundles} corresponding to $\Psi$ is the isomorphism
\begin{gather}\label{representationmorphism} \Psi(\mathbb{V})\colon\ E(\mathbb{V}) \longrightarrow \varphi^* E'(\mathbb{V})
 \end{gather}
 associated to the $P$-equivariant isomorphism $\Psi^* \otimes {\rm Id}_{\mathbb{V}}$ of trivial $\mathcal{O}_E$-modules.
\begin{Definition}\label{meromorphicisoprincipal}
Let $\mathbb{V}$ be a representation of a complex Lie group $P$. Let $E\overset{p_1}{\longrightarrow} M$ and $E'\overset{p_1}{\longrightarrow} M'$ be two holomorphic $P$-principal bundles, and $D$, $D'$ be respectively two divisors of~$M$ and~$M'$.
\begin{itemize}\itemsep=0pt
\item[$(1)$] An isomorphism $\Psi E\vert_{M\setminus D} \longrightarrow E'\vert_{M'\setminus D'}$ of holomorphic $P$-principal bundles is \textit{$\mathbb{V}$-mero\-mor\-phic} between $(M,D)$ and $(M',D')$ iff the representation isomorphism $\Phi=\Psi(\mathbb{V})$ restricts to an isomorphism $\Phi\colon E(\mathbb{V})(* D) \longrightarrow \varphi^* E'(\mathbb{V})(* D')$ (see \eqref{representationmorphism}).
\item[$(2)$] A \textit{$\mathbb{V}$-meromorphic section} of a holomorphic $P$-principal bundle $E\overset{p}{\longrightarrow} M$ on $U$ with pole at $D$ is a holomorphic section $\sigma\colon U\setminus D \longrightarrow E$ such that the corresponding trivialisation $\psi_\sigma$ of $E(\mathbb{V})\vert_{U\setminus D}$ induces an isomorphism of meromorphic bundles between $E(\mathbb{V})$ and $\mathcal{O}_U \otimes \mathbb{V}$.
 \end{itemize} \end{Definition}
In particular, mapping holomorphic ${\rm GL}_r(\mathbb{C})$-principal bundles $E$ over $M$ to the associated representation modules $E(\mathbb{C}^r)$ gives an equivalence of categories. A pseudo-inverse is given by mapping a locally free $\mathcal{O}_M$-module $\mathcal{E}$ of rank $r$ to its frame bundle $E$.

Consider $\mathfrak{p}={\rm Lie}(P)$ which is the adjoint representation of $P$. Let $\operatorname{At}(E)$ be the $\mathcal{O}_M$-module associated with the $P$-equivariant locally free $\mathcal{O}_E$-module $TE$ equipped with the $P$-linearization induced by the infinitesimal action of $P$ on $E$: it is called the \textit{Atiyah bundle} of $E$, and fits into the short exact sequence (see \cite[Theorem 1]{Atiyah})
\begin{gather}\label{Atiyah} \xymatrix{ 0 \ar[r] & E(\mathfrak{p}) \ar[r]^{\iota} & \operatorname{At}(E) \ar[r]^q & TM \ar[r]& 0,} \end{gather}
 where $\iota$ is the morphism associated with the $P$-equivariant morphism which to any $A \in \mathcal{O}_E \otimes \mathfrak{p}$ associates the corresponding fundamental vector field on $E$, and $q$ is the one associated with the $P$-equivariant morphism ${\rm d}p\colon TE \longrightarrow p^*TM$.

The previous equivalence implies that $P$-equivariant meromorphic one-forms on $E$, with poles at $\tilde{D}=p^{-1}(D)$, and values in $\mathbb{V}$ are in bijection with morphisms $\beta\colon \operatorname{At}(E)(* D) \longrightarrow E(\mathbb{V})(* D)$, or equivalently with sections of $\operatorname{At}(E) \otimes E(\mathbb{V})(* D)$. This correspondence restricts to a bijective correspondence between
 \begin{itemize}\itemsep=0pt
 \item[$\bullet$] The set of morphisms $\beta$ as above vanishing on the image of $\iota$ in \eqref{Atiyah}, equivalently sections of $\Omega^1_M(* D) \otimes E(\mathbb{V})$.
 \item[$\bullet$] The set of meromorphic one-forms $\tilde{\omega}$ on $\big(E,\tilde{D}\big)$ with values in $\mathbb{V}$ vanishing on $\ker({\rm d}p)$. \end{itemize}

\subsection{Meromorphic principal connections and meromorphic connections}\label{sectionprincipale}

We now extend the well-known equivalence between linear connections on a vector bundle and principal connections on its frame bundle to the equivalence between meromorphic connections on a locally free $\mathcal{O}_M$-module $\mathcal{E}$ and meromorphic principal connections on its frame bundle~$E$. It straightforwardly restricts as an equivalence between meromorphic connections preserving a holomorphic reduction $\overline{E}\subset E$ to a subgroup $P\subset {\rm GL}_r(\mathbb{C})$, and meromorphic $P$-principal connections on $E$. In the non-singular setting, this was first proved by C.~Ehresmann \cite{Erhesmanh} using the formalism of horizontal lifts for paths, and reformulated in an equivariant way by M.~Atiyah~\cite{Atiyah}. We adopt the point of view of M.~Atiyah in order to extend the result to the meromorphic category.

 The starting point is that for $P={\rm GL}_r(\mathbb{C})$, there is a canonical isomorphism \cite[Proposition~9]{Atiyah}
 \begin{gather*}
 E(\mathfrak{p}) = \operatorname{End}(\mathcal{E}).
 \end{gather*}
There is a bijection between the set of meromorphic connections $\nabla$ on $\mathcal{E}$ and the one of $\mathcal{O}_M$-linear splittings $\delta\colon \mathcal{E}(* D) \longrightarrow J^1(\mathcal{E})(* D)$ of the exact sequence of $\mathbb{C}$-sheaves
 \begin{gather}\label{jetsequence} \xymatrix{ 0 \ar[r] & \Omega^1_M(* D) \otimes \mathcal{E} \ar[r] & J^1(\mathcal{E})(* D) \ar[r] & \mathcal{E}(* D) \ar[r] & 0, }
 \end{gather}
 where $J^1(\mathcal{E})$ is the jet-module of $\mathcal{E}$ (see \cite[p.~193]{Atiyah}). Let $\sigma\colon U \longrightarrow E$ be a holomorphic section of the holomorphic frame bundle. This corresponds to a basis $(s_1,\dots,s_r)$ of $\mathcal{E}\vert_U$, and we denote in the following lines by $d$ the pullback of the de Rham differential through the corresponding isomorphism $\mathcal{E}\vert_U \simeq \mathcal{O}_U^{\oplus r}$. The former equivalence is given by $\nabla = {\rm d} - \delta$. Indeed, this clearly defines a meromorphic connection, and if $\nabla$ is a meromorphic connection on $\mathcal{E}\vert_U$, then $\delta_1 = {\rm d}-\nabla$ is a morphism of $\mathcal{O}_U$-modules from $\mathcal{E}\vert_U(* D)$ to $\Omega^1_U(* D) \otimes \mathcal{E}\vert_U$, and we obtain a splitting $\delta = ({\rm Id}_{\mathcal{E}\vert_U}, \delta_1)$ of \eqref{jetsequence}.

\begin{Definition}\label{principal} A \textit{meromorphic principal connection} on a holomorphic ${\rm GL}_r(\mathbb{C})$-principal bundle $E\overset{p}{\longrightarrow}M$ with poles at $\tilde{D}=p^{-1}(D)$ \big(shortly on $\big(E,\tilde{D}\big)$\big) is a meromorphic one-form $\tilde{\omega}$ on~$E$ with values in $\mathfrak{p}$, which is $P$-equivariant and such that $\tilde{\omega}$ coincides with the Maurer--Cartan form of $P$ when restricted to any fiber $p^{-1}(x) \subset E$. \end{Definition}
 Using the correspondence for equivariant one-forms as in Section~\ref{Atiyahframe}, a meromorphic $P$-principal connection on $\big(E,\tilde{D}\big)$ is equivalent to a morphism $\beta\colon \operatorname{At}(E)(* D) \longrightarrow E(\mathfrak{p})(* D)$ such that $\iota \circ \beta = {\rm Id}_{\operatorname{At}(E)}$, where $\iota$ is defined in \eqref{Atiyah}. Its kernel defines a splitting
\begin{gather}\label{tau}
 \tau\colon\ TM(* D) \longrightarrow \operatorname{At}(E)(* D)
 \end{gather}
 of \eqref{Atiyah}, which uniquely determines $\beta$. The following lemma straightforwardly follows from the equivalence described before between equivariant morphisms of modules over principal bundles and morphisms between the corresponding modules over the base manifolds.

\begin{Lemma}\label{lemmetau} Let $(M,D)$ and $(M',D')$ be two pairs of same dimension. Let $\tilde{\Psi}\colon E \longrightarrow E'$ be an isomorphism of holomorphic $P$-principal bundles over $M$ and $M'$ covering a morphism of pairs $\varphi\colon M \longrightarrow M'$ $($i.e., $\varphi(D)=D'$$)$. Let $\tilde{\omega}_2$ be a meromorphic principal connection on $\big(E,\tilde{D}_1\big)$ where $\tilde{D}_1$ is the preimage of $D$,
respectively $\tilde{\omega}_1=\tilde{\Psi}^\star \tilde{\omega}_2$, and $\tau_1$ $($resp.\ $\tau_2)$ is the splitting associated to $\tilde{\omega}_1$ $($resp.\ $\tilde{\omega}_2)$ as in~\eqref{tau}.
Then the diagram below is commutative
\begin{gather*}
 \xymatrix{ TM \ar[r]^-{\tau_1} \ar[d]_{{\rm d}\varphi} & \operatorname{At}(E)(* D) \ar[d]^{p_* {\rm d}\tilde{\Psi}} \\ \varphi^* TM' \ar[r]_-{\varphi^*\tau_2} & \varphi^* \operatorname{At}(E')(* D), }
 \end{gather*}
 where $p$ is the foot map of $E$.
\end{Lemma}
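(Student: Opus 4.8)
The lemma is of the kind announced just above it, so the plan is to descend an evident commutativity on the total spaces through the equivalence of Section~\ref{Atiyahframe} between $P$-equivariant $\mathcal{O}_E$-modules and $\mathcal{O}_M$-modules. First I would rewrite the data upstairs: under this equivalence $\operatorname{At}(E)$ is the descent of $TE$, $TM$ that of $p^{*}TM$, the anchor $q$ of \eqref{Atiyah} that of $\mathrm{d}p$, while $E(\mathfrak{p})$ descends $\mathcal{O}_E\otimes\mathfrak{p}$ with $\iota$ descending the fundamental--vector--field map. Since $\tilde{\Psi}$ is a $P$-equivariant biholomorphism covering $\varphi$, its differential $\mathrm{d}\tilde{\Psi}\colon TE\to\tilde{\Psi}^{*}TE'$ is an isomorphism of $P$-equivariant $\mathcal{O}_E$-modules, and $p_{*}\mathrm{d}\tilde{\Psi}$ in the statement is precisely its descent $\operatorname{At}(E)(*D)\to\varphi^{*}\operatorname{At}(E')(*D)$. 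A meromorphic principal connection being a retraction $\beta$ of $\iota$, equivalently the splitting $\tau$ of $q$ as in \eqref{tau} whose image is $\ker\beta$, the splitting $\tau_i$ corresponds under the equivalence to the meromorphic horizontal lift $h_i\colon p^{*}TM(*\tilde{D})\to TE(*\tilde{D})$ attached to $\tilde{\omega}_i$, characterized by $\mathrm{d}p\circ h_i=\operatorname{Id}$ and $\operatorname{image}(h_i)=\ker(\tilde{\omega}_i)$.

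Then I would check the commutativity on $E$. Because $p'\circ\tilde{\Psi}=\varphi\circ p$ one has $\mathrm{d}p'\circ\mathrm{d}\tilde{\Psi}=\mathrm{d}\varphi\circ\mathrm{d}p$, so the projections are compatible. Because $\tilde{\omega}_1=\tilde{\Psi}^{\star}\tilde{\omega}_2$ --- which, under the canonical identification $\tilde{\Psi}^{*}(\mathcal{O}_{E'}\otimes\mathfrak{p})=\mathcal{O}_E\otimes\mathfrak{p}$ coming from the $P$-equivariance of $\tilde{\Psi}$, reads $\tilde{\omega}_1=\tilde{\omega}_2\circ\mathrm{d}\tilde{\Psi}$ --- the differential $\mathrm{d}\tilde{\Psi}$ carries $\ker(\tilde{\omega}_1)=\operatorname{image}(h_1)$ into $\ker(\tilde{\omega}_2)=\operatorname{image}(h_2)$. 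Combining these two facts with the fact that $h_2$ is a section of $\mathrm{d}p'$, uniqueness of the horizontal lift for $\tilde{\omega}_2$ gives $\mathrm{d}\tilde{\Psi}\circ h_1=(\tilde{\Psi}^{*}h_2)\circ\eta$, where $\eta\colon p^{*}TM\to\tilde{\Psi}^{*}(p')^{*}TM'$ is the canonical isomorphism that descends $\mathrm{d}\varphi$. Applying the equivalence of categories to this identity then produces exactly the square of the statement, the four modules being $\mathcal{O}_M(*D)$-modules since $\tilde{\omega}_1,\tilde{\omega}_2$ have poles on $D$.

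I expect the only real friction to be the bookkeeping in the middle step: one has to make sure that the pullback of a principal connection is genuinely $\tilde{\omega}_2\circ\mathrm{d}\tilde{\Psi}$ for the correct identification of $\tilde{\Psi}^{*}(\mathcal{O}_{E'}\otimes\mathfrak{p})$ with $\mathcal{O}_E\otimes\mathfrak{p}$, and that this identification is the one compatible with the one landing $p_{*}\mathrm{d}\tilde{\Psi}$ in $\varphi^{*}\operatorname{At}(E')$ --- otherwise the $E(\mathfrak{p})$-column of the morphism of Atiyah sequences would be off by an automorphism. Should the descent argument feel too compressed, an alternative is purely formal: all sheaves in the square are torsion-free over $\mathcal{O}_M(*D)$, so it is enough to prove the analogous \emph{holomorphic} statement over the dense open subset $M\setminus D$ --- where it is the classical functoriality of the Atiyah sequence, equivalently of horizontal lifts, under an isomorphism of principal bundles --- and then use that a morphism of $\mathcal{O}_M(*D)$-modules vanishing on $M\setminus D$ vanishes identically.
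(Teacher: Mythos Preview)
Your proposal is correct and follows exactly the approach the paper indicates: the paper's proof is the single sentence that the lemma ``straightforwardly follows from the equivalence described before between equivariant morphisms of modules over principal bundles and morphisms between the corresponding modules over the base manifolds,'' and your argument is precisely a careful unpacking of that equivalence (with the horizontal-lift formulation upstairs and the descent to $\operatorname{At}(E)$). Your alternative via restriction to $M\setminus D$ and torsion-freeness is also valid and is a reasonable way to shortcut the bookkeeping you flag.
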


Denote by $\tilde{{\rm d}}$ the usual de Rham differential on $\mathcal{O}_E\bigl(*\tilde{D}\bigr) \otimes \mathbb{V}$. Since the $P$-linearization $\big(\phi^{\mathbb{V}}_b\big)_{b\in P}$ preserves the subsheaf of constant functions with values in $\mathbb{V}$ on $E$, the pushforward~$p_* \tilde{{\rm d}}$ restricts to $p_* \tilde{{\rm d}}\colon \mathcal{E} \longrightarrow p_* \Omega_E^1 \otimes \mathcal{E}$. This defines a meromorphic connection $\nabla$ on $\mathcal{E}$ by
 \begin{equation}\label{covariant} \nabla = \tau \lrcorner p_* \tilde{{\rm d}}, \end{equation}
 where $\tilde{{\rm d}}$ is defined above and $\lrcorner$ stands for the contraction by a vector field.
\begin{Proposition}\label{correspondenceprincipal} Mapping a meromorphic principal connection $(E,\tilde{\omega})$ over $(M,D)$ to the meromorphic connection $(\mathcal{E},\nabla)$ on $(M,D)$ defined by \eqref{covariant} induces an equivalence of categories between
\begin{itemize}\itemsep=0pt
 \item The category of principal meromorphic $($resp.\ holomorphic$)$ connections over pairs, where the arrows are the $\mathbb{C}^r$-meromorphic isomorphisms $($see Definition~{\rm \ref{meromorphicisoprincipal})} of principal bundles between pairs preserving the principal connections $($resp.\ isomorphisms of holomorphic principal bundles preserving the principal connections$)$.
 \item The category of meromorphic $($resp.\ holomorphic$)$ connections on $(M,D)$ with isomorphisms of meromorphic bundle $($resp.\ holomorphic vector bundles, see Definition~{\rm \ref{isovect})} preserving connections $($in the sense of Definition~{\rm \ref{pullback})}. \end{itemize} \end{Proposition}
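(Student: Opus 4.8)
The plan is to establish the three ingredients of an equivalence of categories --- functoriality, essential surjectivity, and full faithfulness --- each by transporting the correspondences set up in Sections~\ref{Atiyahframe} and~\ref{sectionprincipale} through the naturality statement of Lemma~\ref{lemmetau}. On objects there is nothing to add to the paragraph preceding the statement: $p_*\tilde{{\rm d}}$ is holomorphic and its Leibniz defect on the base is measured by ${\rm d}_M$, whereas the splitting $\tau$ of~\eqref{tau} has poles supported on $D$, so $\nabla=\tau\lrcorner p_*\tilde{{\rm d}}$ of~\eqref{covariant} is a $\mathbb{C}_M$-linear morphism obeying the Leibniz identity with poles at $D$, i.e.\ a meromorphic connection on $(M,D)$; the holomorphic variant is the sub-statement obtained by deleting all the $(* D)$, and compatibility with composition and identities is immediate.

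For functoriality on morphisms, let $\tilde\Psi\colon E\to E'$ be a $\mathbb{C}^r$-meromorphic isomorphism of principal bundles over a morphism of pairs $\varphi$ intertwining $\tilde\omega$ and $\tilde\omega'$, and put $\Phi=\tilde\Psi(\mathbb{C}^r)$; by Definition~\ref{meromorphicisoprincipal}(1) this is an isomorphism of meromorphic bundles $\mathcal{E}(* D)\to\varphi^*\mathcal{E}'(* D')$ in the sense of Definition~\ref{isovect}. That $\Phi$ preserves the connections in the sense of Definition~\ref{pullback} follows by contracting two commutations: the square of Lemma~\ref{lemmetau}, which expresses the compatibility of $\tau_1$, $\tau_2$ with $p_*{\rm d}\tilde\Psi$, and the compatibility of $p_*\tilde{{\rm d}}$ with $\Phi$, which is the relation ${\rm d}_E\circ\tilde\Psi_\#=({\rm d}\tilde\Psi)^*\circ\tilde\Psi^{-1}{\rm d}_{E'}$ recorded before~\eqref{nablabar} (now for the biholomorphism $\tilde\Psi$) pushed down along the foot maps of $E$ and $E'$. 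Contracting the first by the second gives $\Phi\circ\nabla=(\varphi^\star\nabla')\circ\Phi$, as desired.

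For essential surjectivity we build a pseudo-inverse on objects. Given $(\mathcal{E},\nabla)$ let $E$ be its frame bundle; over a trivialising open $U$ with section $\sigma$ and associated basis $(s_i)$, the morphism ${\rm d}-\nabla$ is $\mathcal{O}_U$-linear, hence is contraction against a meromorphic connection matrix $A_\sigma$, a $\mathfrak{p}$-valued one-form on $U$ with poles at $D$, and we let $\tilde\omega$ be, on $p^{-1}(U)\simeq U\times P$ with $P={\rm GL}_r(\mathbb{C})$, the usual principal connection form $\operatorname{Ad}\bigl(g^{-1}\bigr)A_\sigma+\theta_P$ built from $A_\sigma$ and the Maurer--Cartan form $\theta_P$. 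The gauge-transformation formula $A'=Q^{-1}{\rm d}Q+Q^{-1}AQ$ recalled after Definition~\ref{connection} is exactly the change-of-section rule for such forms, so the local $\tilde\omega$ glue to a $P$-equivariant meromorphic one-form on $\bigl(E,\tilde D\bigr)$ restricting to $\theta_P$ on every fibre --- a meromorphic principal connection. That the two constructions are mutually inverse reduces, in the trivialisation $\sigma$, to the classical fact that $\tilde\omega$ determines and is determined by its connection matrix $A_\sigma$, together with the identity --- obtained by unwinding~\eqref{covariant} in this chart --- that $\tau\lrcorner p_*\tilde{{\rm d}}$ has connection matrix $A_\sigma$ in the basis $\sigma$.

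Full faithfulness is then formal. The equivalence $E\mapsto E(\mathbb{C}^r)$ recalled after Definition~\ref{meromorphicisoprincipal}, in its meromorphic form (obtained from the holomorphic one by restriction to $M\setminus D$), is a bijection $\tilde\Psi\leftrightarrow\Phi$ between $\mathbb{C}^r$-meromorphic isomorphisms and isomorphisms of the meromorphic bundles $E(\mathbb{C}^r)(* D)$, so it remains only to see that $\tilde\Psi$ intertwines $\tilde\omega$, $\tilde\omega'$ iff $\Phi$ intertwines $\nabla$, $\nabla'$. The ``only if'' is the morphism part above; the ``if'' runs the same two commutations backwards --- $p_*\tilde{{\rm d}}$ is canonical, $\tau$ is recovered from $\nabla$ by the pseudo-inverse, and $\tilde\omega$ is in turn determined by $\tau$ through the splitting $\beta$ of~\eqref{Atiyah} as in~\eqref{tau} --- so compatibility of $\Phi$ forces compatibility of $p_*{\rm d}\tilde\Psi$ with $\tau_1$, $\tau_2$, hence $\tilde\Psi^\star\tilde\omega'=\tilde\omega$ by Lemma~\ref{lemmetau}. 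We expect the only genuine work to be the pseudo-inverse step: the gluing of the $\tilde\omega\vert_{p^{-1}(U)}$ (matching the change-of-section law of a principal connection form with the gauge formula) and the chart computation identifying $\tau\lrcorner p_*\tilde{{\rm d}}$ with the connection of matrix $A_\sigma$; everything else is transport of the already-proven equivalences through Lemma~\ref{lemmetau}.
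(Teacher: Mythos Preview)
Your argument is correct and follows the same three-step architecture as the paper's proof: functoriality on morphisms via Lemma~\ref{lemmetau}, construction of a pseudo-inverse on objects by localising and gluing, and full faithfulness by running the morphism argument backwards through the same lemma. The one substantive difference is purely stylistic: for the pseudo-inverse, the paper stays in the Atiyah-sequence language --- a local section $\sigma$ gives a reference splitting $\tau_0$ of~\eqref{Atiyah}, the $\mathcal{O}_U$-linear defect $\delta=\nabla-{\rm d}$ determines a correction $\Theta\colon TU\to\operatorname{At}(E)(* D)$, and one sets $\tau=\tau_0+\Theta$ --- whereas you write down the classical Ehresmann form $\operatorname{Ad}(g^{-1})A_\sigma+\theta_P$ directly. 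Both descriptions encode the same local data (the connection matrix $A_\sigma$) and both glue by the same gauge formula; the paper's choice keeps the argument uniformly equivariant, yours is more explicit and closer to the computation one would actually perform in coordinates.
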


\begin{proof} Let us first prove that this map induces a functor. Let $\tilde{\Psi}\colon E \longrightarrow E'$ be an isomorphism of meromorphic principal connections on ${\rm GL}_r(\mathbb{C})$-principal bundles between $(E_1,\tilde{\omega}_1)$ and $(E_2,\tilde{\omega}_1)$, over $(M,D)$. Let $(\mathcal{E}_1,\nabla_1)$ and $(\mathcal{E}_2,\nabla_2)$ be the meromorphic connections obtained by $\mathcal{E}_i=E_i(\mathbb{C}^r)$ (see \eqref{representationmodule}) and $\nabla_i$ defined as in \eqref{covariant}. Let $(\varphi,\Phi)$ be the isomorphism of vector bundles (at the level of sheaves, see Definition~\ref{isovect}) where $\varphi$ is the isomorphism of complex manifolds covered by $\tilde{\Psi}$ and where $\Phi:=\tilde{\Psi}(\mathbb{C}^r)$ (see \eqref{representationmorphism}).

Fix any open subset $U\subset M$ and a basis $(s_i)_{i=1,\dots,r}$ of $\mathcal{E}_1\vert_U$ and denote by $(\varphi^* t_i)_{i=1,\dots,r}$ its image through $\Phi$. Denote by $(\tilde{s}_i)_{i=1,\dots,r}$ and $\big(\tilde{t}_{i=1,\dots,r}\big)$ respectively the corresponding equivariant functions on $p_1^{-1}(U)$ and $p_2^{-1}(\varphi(U))$. Thus $\tilde{t}_i = \tilde{s}_i \circ \tilde{\Psi}$ by definition of $\Phi$. By definition of~$\Phi^{-1} \varphi^\star \nabla_2$, we can compute
\begin{gather*}
\Phi^{-1} \varphi^\star \nabla_2(s_i) = \big({\rm Id}_{\Omega^1_{M}} \otimes \Phi^{-1}\big)[{\rm d}\varphi \lrcorner (\varphi^* \nabla_2 (\varphi^* t_i))]. \end{gather*}
Using the definition of $\nabla_1$ and $\nabla_2$, and Lemma~\ref{lemmetau}, we get
\begin{align*}
 \Phi^{-1} \varphi^\star \nabla_2(s_i)&= \big({\rm Id}_{\Omega^1_{M}} \otimes \Phi^{-1}\big)\big[ (\varphi^*\tau_2\circ {\rm d}\varphi) \lrcorner \varphi^* p_{2*}\tilde{{\rm d}}_2\big(\tilde{t}_i\big)\big]\\
 &= \tau_1 \lrcorner \big(p_{1*}\tilde{{\rm d}}_1\tilde{t}_i \circ \tilde{\Psi}\big) \\
 &=\nabla_1(s_i), \end{align*}
 where we denoted by $\tilde{{\rm d}}_1$ and $\tilde{{\rm d}}_2$ the usual de Rham differentials on $\mathcal{O}_{E}\otimes \mathbb{C}^r$ and $\mathcal{O}_{E'}\otimes \mathbb{C}^r$. Hence we can map $\tilde{\Psi}$ to the vector bundle isomorphism $(\varphi,\Phi)$ which preserves the linear meromorphic connections $\nabla_1$ and $\nabla_2$.

 Now, we construct the pseudo-inverse. Let $(\mathcal{E},\nabla)$ be a meromorphic connection over a~pair $(M,D)$. Denote by $E$ its frames bundle. Let $x\in M$ and $U$ be a neighborhood equipped with a holomorphic section $\sigma\colon U \longrightarrow E$. Denote by $(s_1,\dots,s_r)$ the corresponding basis of $\mathcal{E}\vert_U$. The section $\sigma$ induces a splitting $TE\vert_{p^{-1}(U)} = p^* TU \oplus \ker({\rm d}p)$ which is $P$-equivariant, hence a~splitting
 \begin{equation}\label{splitAt} \operatorname{At}(E)\vert_U = TU \oplus E(\mathfrak{p})\vert_U.
 \end{equation}
 We denote by $\tau_0$ the splitting of the exact sequence \eqref{Atiyah} restricted to $U$ induced by \eqref{splitAt}, and by ${\rm d}$ the pullback of the de Rham differential through the trivialization associated with $(s_i)_{i=1,\dots,r}$. Let $\delta = \nabla-{\rm d}$, which vanishes on the image of $E(\mathfrak{p})$ through $\iota$ (see \eqref{Atiyah}). Its kernel thus define a morphism $\Theta\colon TU \longrightarrow \operatorname{At}(E)\vert_U(* D)$, and we obtain a splitting
 \begin{gather*}
 \tau= \tau_0 +\Theta
 \end{gather*}
 of \eqref{Atiyah} over $U$. From the remarks above, this is equivalent to a meromorphic principal connection $\tilde{\omega}_U$ on $p^{-1}(U)$ with poles at $\tilde{D}\cap p^{-1}(U)$.

 Now, let $U$, $U'$ be two open subset and $(s_i)_{i=1,\dots,r}$ and $(s'_i)_{i=1,\dots,r}$ be two basis of $\mathcal{E}\vert_U$ and $\mathcal{E}\vert_{U'}$ corresponding to holomorphic sections $\sigma$, $\sigma'$ of $E$ on $U$ and $U'$. Let ${\rm d}$ and ${\rm d}'$ be the corresponding de Rham differentials, then
 \begin{gather*}
 {\rm d}-{\rm d}'(s'_i) = {\rm d}(s'_i) = {\rm d}\bigg(\sum_{j=1}^{r} b^{-1}_{ji} s_j\bigg) = \sum_{j=1}^{r}\big(b{\rm d}_0\big(b^{-1}\big)\big)_{ji} s'_j,
 \end{gather*}
 where $b$ is the meromorphic function on $U\cap U'$ with values in $P$ such that $\sigma' = \sigma \cdot b$, and ${\rm d}_0$ is the usual de Rham differential on $\mathfrak{p}$-valued functions. Denote by $\tau$ and $\tau'$ constructed as before. Thus
\[
 \tau'-\tau= [(\sigma,b^\star \omega_P)].\]
 Thus $\nabla'-\nabla = {\rm d}'-{\rm d} + \tau'-\tau =0$ and the corresponding meromorphic principal connections~$\tilde{\omega}$ and $\tilde{\omega}'$ coincide over $p^{-1}(U\cap U')$. We obtain a global meromorphic principal connection $\tilde{\omega}$ on~$\big(E,\tilde{D}\big)$ inducing $\nabla$ as in \eqref{covariant}.

 If $(\varphi,\Phi_0)$ is an isomorphism of vector bundles preserving the meromorphic connections~$\nabla_1$,~$\nabla_2$, then from Section~\ref{Atiyahframe} it induces an isomorphism $\tilde{\Psi}$ of holomorphic principal bundles between~$E$ and $E'$. Since the action of $P$ on $\mathbb{C}^r$ is free, by definition of $\nabla_1$ and $\nabla_2$ we get that $\varphi^*\tau_2 = \tau_1 \circ {\rm d}\varphi$. By Lemma~\ref{lemmetau}, we obtain $\tilde{\Psi}^\star \tilde{\omega}_2 = \tilde{\omega}_1$.
\end{proof}

\section{Spiral meromorphic Cartan geometries}\label{section2}
In this section, we fix a pair $(M,D)$ where $M$ is of complex dimension $n$. We recall the definition of meromorphic Cartan geometries, and the subcategory of branched holomorphic Cartan geometries introduced by Biswas and Dumitrescu in \cite{BD}.
We introduce the \textit{meromorphic extensions} (see Definition~\ref{meromorphicextension}). This will enable us to extend the equivalence between affine Cartan geometries and affine connections (see Proposition~\ref{equivalenceaffineextension}) and to use arguments of complex geometry in the proof of Theorem~\ref{classificationaffine}.

Following \cite{BlazquezCasale} and \cite{Cap4}, we recall the description of infinitesimal automorphisms as sections for a meromorphic connection either on a trivial module over the principal bundle of the geometry, or on the corresponding module over the base manifold.

We finally introduce the subcategory of \textit{strongly spiral} meromorphic Cartan geometries: in the next section, we will see that their infinitesimal automorphisms are single valued, in a sense that will be defined.

\subsection{Meromorphic and holomorphic branched Cartan geometries}
First, we have recall the definition the models for Cartan geometries (we refer to \cite[Chapter~4]{Sharpe}).
\begin{Definition}\label{Klein} A \textit{complex Klein geometry} of dimension $n\geq 1$ is a~couple $(G,P)$, where $G$ is a~complex Lie group, and $P$ is a closed complex Lie subgroup with $\dim(G)-\dim(P)=n$. \end{Definition}
Let $(G,P)$ be as in Definition~\ref{Klein} and let $P'=\ker(\underline{{\rm Ad}})$, where $\underline{{\rm Ad}}\colon P \longrightarrow {\rm GL}(\mathfrak{g}/\mathfrak{p})$ is the representation induced by the adjoint representation. Then any choice of a basis for $\mathfrak{g}/\mathfrak{p}$ identifies $Q=P/P'$ with a linear complex subgroup, and $TG/P$ with the module $G(\mathfrak{g}/\mathfrak{p})$ associated to the $P$-principal bundle $E$ and the representation $\mathfrak{g}/\mathfrak{p}$. Thus, the complex manifold $G/P$ comes equipped with a holomorphic reduction $G\times_{P} Q$ of its holomorphic frame bundle $R^1(G/P)$, i.e., a holomorphic $Q$-structure: namely $G/P'$.

 This is in fact only due to the presence of a $\mathfrak{g}$-valued holomorphic 1-form with special properties on the total space of the holomorphic $P$-principal bundle $G\longrightarrow G/P$, namely the \textit{Maurer--Cartan form} $\omega_G$ of $G$. We can consider curved versions of theses objects for which the above fact is still true replacing $G$ by a suitable holomorphic $P$-principal bundle (see next subsection). Authorizing the one-form to have poles on the $P$-principal bundle, we obtain their meromorphic analogues.

\begin{Definition}\label{Cartan} Let $(G,P)$ be a complex Klein geometry with $\dim(G/P)=n$ and $(M,D)$ be a~pair. A \textit{meromorphic $(G,P)$-Cartan geometry} is a couple $(E,\omega_0)$ where $E\overset{p}{\to}M$ is a~holomorphic $P$-principal bundle, and $\omega_0$ is a $\mathfrak{g}$-valued meromorphic one-form on $E$, with poles on $\tilde{D}=p^{-1}(D)$, such that
\begin{itemize}\itemsep=0pt
\item[(i)] For any $x\in M\setminus D$, $\iota_x^\star \omega_0$ coincides with the Maurer--Cartan form $\omega_{E,x}$ (see above).
\item [(ii)]$\omega_0$ is $P$-equivariant.
\item[(iii)] For any $e\in E\setminus \tilde{D}$, $\omega_0(e)$ is an isomorphism between $T_e E$ and $\mathfrak{g}$.
\end{itemize}
\end{Definition}
These objects form a category.

\begin{Definition}\label{isoCartan} Let $(G,P)$ be a complex Klein geometry and $(M,D)$, $(M',D')$ be two pairs with $\dim(M)=\dim(M')=\dim(G/P)$. Let $(E,\omega_0)$ and $(E',\omega'_0)$ be respectively two meromorphic $(G,P)$-Cartan geometries on $(M,D)$ and~$(M',D')$. An \textit{isomorphism} between $(E,\omega_0)$ and~$(E',\omega'_0)$ is an isomorphism of $\mathfrak{g}$-meromorphic $P$-principal fiber bundles $\Psi\colon E\setminus \tilde{D} \overset{\sim}{\to} E' \setminus \tilde{D}'$ (see~Definition~\ref{meromorphicisoprincipal}) such that $\Psi^\star \omega'_0 = \omega_0$. \end{Definition}
The following object is central in the study of Cartan geometries.

\begin{Definition}\label{curvature} Let $(E,\omega_0)$ be a meromorphic $(G,P)$-Cartan geometry on $(M,D)$. Its \textit{curvature function} (or \textit{curvature}) is the meromorphic function $k_{\omega_0}$ on $E$ with values in $\mathbb{W}=\bigl(\bigwedge^2 \mathfrak{g}^*\bigr)\otimes \mathfrak{g}$ and defined by
\[k_{\omega_0} = {\rm d}\omega_0 \circ \big(\omega_0^{-1} \wedge \omega_0^{-1}\big) + [\,,\,]_{\mathfrak{g}}, \]
where $[\,,\,]_\mathfrak{g}$ is the Lie-bracket of $\mathfrak{g}$ identified with an element of $\mathbb{W}$. \end{Definition}

Fix a Klein geometry $(G,P)$ and choose a basis $(\mathfrak{e}_i)_{i=1,\dots,N}$ of $\mathfrak{g}$, with $(\mathfrak{e}_i)_{i=1,\dots,n}$ spanning a~subspace $\mathfrak{g}_-$ complementary to $\mathfrak{p}$. Denote by $(\mathfrak{e}_i^*)_{i=1,\dots,N}$ the dual basis of $\mathfrak{g}^*$.
\begin{Definition}\label{constantes}Let $(E,\omega_0)$ be a meromorphic $(G,P)$-Cartan geometry on $(M,D)$. The meromorphic functions
\[\gamma_{i,j}^k= \mathfrak{e}_k^* \circ k_{\omega_0}(\mathfrak{e}_i,\mathfrak{e}_j) \]
 are called the \textit{structure coefficients} (or \textit{structure functions}) of $(E,\omega_0)$. \end{Definition}

A natural subcategory of the meromorphic $(G,P)$-Cartan geometries on pairs was introduced by Biswas and Dumitrescu (see~\cite{BD}).

\begin{Definition}\label{branchedCartan} A \textit{branched holomorphic $(G,P)$-Cartan geometry} on a pair $(M,D)$ is a meromorphic $(G,P)$-Cartan geometry $(E,\omega_0)$ on $(M,D)$ such that $\omega_0$ extends as a holomorphic one-form on~$E$.
\end{Definition}

An important feature of these objects for the classification is the existence of a holomorphic connection on the adjoint vector bundle (see \cite[p.~7]{BD}). We recall below its construction which will be useful.

Let $(E,\omega_0)$ be a branched holomorphic $(G,P)$-Cartan geometry on $(M,D)$, and $E_G=E\times_{P} G$ the extension of the holomorphic $P$-principal bundle $E$ to the group $G$. By definition, $E_G$ is the quotient of the product $E\times G$ by the action of $G$ given by $(e,g)\cdot h = \big(e\cdot h,h^{-1}g\big)$. Consider the $G$-equivariant holomorphic one-form $\overline{\omega}$ on $E\times G$ with values in $\mathfrak{g}$ given by
 \begin{gather*}
 \overline{\omega}= \operatorname{Ad}(\pi_2)\circ \pi_1^\star \omega_0 + \pi_2^\star \omega_G, \end{gather*}
 where $\pi_1$, $\pi_2$ are the projections on each factor and $\omega_G$ is the Maurer--Cartan form of $G$. It is straightforward to verify that for any $A\in \mathfrak{g}_0$, $\overline{\omega}\big(\frac{\rm d}{{\rm d}t}\vert_{t=0}(e,h)\cdot \exp_G(tA)\big) =0$, i.e., the vectors tangent to the fibers of
 \begin{gather*}
 \pi_G\colon\ E\times G \longrightarrow E_G
 \end{gather*}
 are in the kernel of $\overline{\omega}$. Moreover, $\overline{\omega}$ is invariant under the action of $G$ on $E\times G$. Thus, $\overline{\omega}$ induces a holomorphic one-form on $E_G$.
\begin{Definition}\label{tractorconnection} The holomorphic $G$-principal connection $\tilde{\omega}$ on $E_G$ induced by $\overline{\omega}$ as above is the \textit{tractor-connection} of $(E,\omega_0)$. We denote by $\nabla^{\omega_0}$ the corresponding holomorphic connection on $E_G(\mathfrak{g})=E(\mathfrak{g})$ (see Proposition~\ref{correspondenceprincipal}). \end{Definition}

The pullback $p^* E(\mathfrak{g})$ is the trivial module $\mathcal{V}=\mathcal{O}_E\otimes \mathfrak{g}$.

\begin{Lemma}\label{curvaturetractor} The pullback $p^\star \nabla^{\omega_0}$ is ${\rm d} - \operatorname{ad}(\omega_0)$ where ${\rm d}$ is the de Rham differential on the trivial module $\mathcal{V}$, $\operatorname{ad}(\omega_0)$ is the section of $\Omega^1_E \otimes \operatorname{End}(\mathfrak{g}) = \Omega^1_E \otimes \operatorname{End}(\mathcal{V})$ defined by
\[X\lrcorner \operatorname{ad}(\omega_0)(s)= [\omega_0(X),s]_\mathfrak{g}\] for any holomorphic vector field $X$ of $E$ and section $s$ of $\mathcal{V}$. In particular, its curvature is $R_{p^\star \nabla^{\omega_0}} = \operatorname{ad}({\rm d}\omega_0) + \operatorname{ad}(\omega_0\wedge \omega_0)$. \end{Lemma}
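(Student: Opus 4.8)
The plan is to unwind the definition of $\nabla^{\omega_0}$ through the tractor construction, read off the induced connection on the trivial module $\mathcal V=\mathcal O_E\otimes\mathfrak g$, and then compute its curvature formally.

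I would begin with the compatibility $\iota_E^\star\tilde\omega=\omega_0$, where $\iota_E\colon E\to E_G$, $e\mapsto\pi_G(e,\mathrm{id}_G)$, is the canonical holomorphic $P$-reduction. Writing $s_0\colon E\to E\times G$, $e\mapsto(e,\mathrm{id}_G)$, one has $\iota_E=\pi_G\circ s_0$, with $\pi_1\circ s_0=\mathrm{id}_E$ and $\pi_2\circ s_0$ constant. Since $\tilde\omega$ is characterised by $\pi_G^\star\tilde\omega=\overline\omega$, pulling $\overline\omega=\operatorname{Ad}(\pi_2)\circ\pi_1^\star\omega_0+\pi_2^\star\omega_G$ back by $s_0$ kills the Maurer--Cartan term and leaves $\iota_E^\star\tilde\omega=\operatorname{Ad}(\mathrm{id}_G)\circ\omega_0=\omega_0$.

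Next I factor the projection $p\colon E\to M$ as $p=p_{E_G}\circ\iota_E$, so that $p^\star\nabla^{\omega_0}=\iota_E^\star\bigl(p_{E_G}^\star\nabla^{\omega_0}\bigr)$ by functoriality of the pullback of connections. For the pullback of $\nabla^{\omega_0}$ to the total space $E_G$ along its own projection there is a tautological trivialisation of $p_{E_G}^\star E_G$ whose section pulls the connection one-form $\tilde\omega$ back to itself; hence, by the correspondence used to define $\nabla^{\omega_0}$ (Proposition~\ref{correspondenceprincipal}, equation~\eqref{covariant}, applied to the adjoint representation $\operatorname{Ad}\colon G\to\operatorname{GL}(\mathfrak g)$: in such a trivialisation the connection is the de Rham differential twisted by $\operatorname{ad}$ of the connection form), one gets $p_{E_G}^\star\nabla^{\omega_0}=\mathrm d-\operatorname{ad}(\tilde\omega)$ on $p_{E_G}^\star E_G(\mathfrak g)=\mathcal O_{E_G}\otimes\mathfrak g$. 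Applying $\iota_E^\star$, using $E_G(\mathfrak g)=E(\mathfrak g)$ together with the identification $\iota_E^\star(\mathcal O_{E_G}\otimes\mathfrak g)=\mathcal V$ compatible with the canonical frames (via $[\pi_G(e,g),v]\mapsto[e,\operatorname{Ad}(g)v]$), and the identity $\iota_E^\star\tilde\omega=\omega_0$ from the first step, yields $p^\star\nabla^{\omega_0}=\mathrm d-\operatorname{ad}(\omega_0)$, consistent with the already-recorded fact $p^\star E(\mathfrak g)=\mathcal V$. Alternatively one may argue concretely: take a local holomorphic section $\sigma$ of $E$, note that $\nabla^{\omega_0}$ in the induced frame of $E(\mathfrak g)$ has connection matrix $\operatorname{ad}(\sigma^\star\omega_0)$, and pass to the canonical frame of $\mathcal V$ over $p^{-1}(U)$ by the gauge-transformation formula of Section~\ref{section1}, the change of frame being $\operatorname{Ad}(b)$ for $\sigma\circ p=e\cdot b$ and the equivariance law $(\sigma\circ p)^\star\omega_0=\operatorname{Ad}(b^{-1})\omega_0+b^\star\omega_P$ being exactly conditions (i)--(ii) of Definition~\ref{Cartan}. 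For the curvature, $\mathcal V$ is a trivial module and $p^\star\nabla^{\omega_0}$ has the form $\mathrm d+A$ with $A$ a holomorphic $\operatorname{End}(\mathcal V)$-valued one-form built from $\operatorname{ad}(\omega_0)$, so $R_{p^\star\nabla^{\omega_0}}=\mathrm dA+A\wedge A$; since $\operatorname{ad}$ commutes with $\mathrm d$ (linearity) and $[\operatorname{ad}(u),\operatorname{ad}(v)]=\operatorname{ad}([u,v]_\mathfrak g)$ (Jacobi), expanding the two terms gives the stated formula $R_{p^\star\nabla^{\omega_0}}=\operatorname{ad}(\mathrm d\omega_0)+\operatorname{ad}(\omega_0\wedge\omega_0)$.

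The only delicate point is sign bookkeeping: one must keep the $\operatorname{Ad}$/$\operatorname{ad}$ and Maurer--Cartan conventions consistent between the geometric description of principal connections and the sheaf-theoretic definitions (Definition~\ref{pullback}, Proposition~\ref{correspondenceprincipal}, equation~\eqref{covariant}), and verify that the module-theoretic pullback $p^\star$ coincides with the geometric pullback along $p$ and that the competing trivialisations of $\mathcal V$ in play (the canonical one, the one from $\iota_E$, and the one from a local section of $E$) all agree. Granting this, everything is formal.
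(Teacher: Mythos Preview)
Your argument is correct but organized differently from the paper's. You factor $p=p_{E_G}\circ\iota_E$, first establish $\iota_E^\star\tilde\omega=\omega_0$, and then invoke the general principle that the pullback of an associated-bundle connection to the total space of its principal bundle is $\mathrm d-\rho_*(\tilde\omega)$ in the tautological frame; restricting along the $P$-reduction $\iota_E$ gives the formula. The paper instead works directly on $E\times G$ with a specific class of test vectors: for an $\omega_0$-constant field $\tilde A=\omega_0^{-1}(A)$ on $E$, it identifies the horizontal lift (in $\ker\overline\omega$, projecting to $\tilde A$ via $\pi_1$) explicitly as $\overline A-\hat A$, where $\overline A$ is the $G$-invariant extension of $\tilde A$ and $\hat A$ is the vertical field with $\pi_2^\star\omega_G(\hat A)=A$; the term $\hat A\lrcorner\,\mathrm d\tilde s=[A,s]_\mathfrak g$ then drops out of the $G$-equivariance of $\tilde s$. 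Your route is more structural and makes the functoriality transparent (and your alternative via a local section $\sigma$ and the gauge formula is essentially the paper's computation rephrased on $M$ rather than on $E\times G$); the paper's route is more self-contained, since it does not presuppose the $\mathrm d-\rho_*(\tilde\omega)$ formula and the choice of $\omega_0$-constant fields makes the bracket term appear without any sign chase. Both approaches dispatch the curvature statement identically via the standard $\mathrm dA+A\wedge A$ computation in a trivialisation.
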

\begin{proof}Since the $\omega_0$-constant vector fields on $E$ span $T_eE$ at any $e\in E\setminus \tilde{D}$, we can choose $\tilde{A}=\omega_0^{-1}(A)$ for $A\in \mathfrak{g}$ as a holomorphic vector field on $E\setminus \tilde{D}$. Let $s$ be any section of $\mathcal{V}(U)$, $U\subset E\setminus \tilde{D}$ an open subset. By definition of $\nabla^{\omega_0}$ and the remarks preceding Definition~\ref{tractorconnection}, we have
\[\tilde{A}\lrcorner p^\star \nabla^{\omega_0}(s) =\big(\overline{A}-\hat{A}\big)\lrcorner {\rm d}(\tilde{s}),\]
where $\tilde{s}$ is the unique $G$-equivariant section of $\mathcal{O}_{E\times G}\otimes \mathfrak{g}$ which coincides with $s$ in restriction to $E\subset E\times G$, $\overline{A}$ is the unique $G$-invariant meromorphic vector field whose restriction to $E$ coincides with $\tilde{A}$, and $\hat{A}$ is the holomorphic vector field tangent to the fibers of $E\times G \overset{\pi_1}{\longrightarrow} E$ such that $\pi_2^\star \omega_G\big(\hat{A}\big)=A$. Indeed, $\overline{A}-\hat{A}$ is the unique vector field which belongs to $\ker(\overline{\omega})$ and projects to $\tilde{A}$ via $\pi_1\colon E\times G\longrightarrow E$. Now, $\overline{A} \lrcorner {\rm d}(\tilde{s})$ coincides with $\tilde{A}\lrcorner {\rm d}(\tilde{s})$ in restriction to $E$, while $\hat{A}\lrcorner {\rm d}(\tilde{s}) = [A,s]_\mathfrak{g}$ because $\tilde{s}$ is $G$-equivariant. The first formula follows. For the curvature, it corresponds to the classical computation of the curvature in a trivialisation of a~vector bundle.
\end{proof}

\subsection{Meromorphic extension of the tangent sheaf}
We now introduce an object induced by any meromorphic Cartan geometry, which plays the same role as the tangent bundle of the base manifold in the regular case. It is a particular case of the following objects.

\begin{Definition}\label{meromorphicextension} Let $(M,D)$ be a pair.
\begin{itemize}\itemsep=0pt
\item[(1)] A meromorphic extension of $(M,D)$ is a couple $(\phi_0,\mathcal{E})$ where $\mathcal{E}$ is a locally free $\mathcal{O}_M$-module and $\phi_0\colon TM(* D) \longrightarrow \mathcal{E}(* D)$ is an isomorphism of $\mathcal{O}_M$-modules.
\item[(2)] A holomorphic extension of $(M,D)$ is a meromorphic extension $(\phi_0,\mathcal{E})$ such that
    \[{\phi_0(TM)\subset \mathcal{E}}.\]
\item[(3)] The category $\mathcal{F}$ \big(resp.\ $\mathcal{F}^0$\big) of meromorphic extensions (resp.\ holomorphic extensions) over pairs is defined as follow. An arrow between two meromorphic extensions $(\phi_0,\mathcal{E})$ and~$(\phi'_0,\mathcal{E}')$ over $(M,D)$ and $(M',D')$ is an isomorphism $(\varphi,\Phi)$ of meromorphic bundles (resp.\ of vector bundles, see Definition~\ref{isovect}) between $\mathcal{E}$ and $\mathcal{E}'$ such that the following diagram commutes:
    \begin{equation*} \xymatrix{ TM \ar[r]^{\phi_0} \ar[d]_{{\rm d}\varphi} & \mathcal{E}(* D) \ar[d]^{\Phi} \\ \varphi^* TM' \ar[r]_-{\varphi^* \phi'_0} & \varphi^* \mathcal{E}'(* D). } \end{equation*}
 \item[(4)] The category obtained by restricting to meromorphic extensions of $(M,D)$ and to isomorphisms of meromorphic bundles of the form $({\rm Id}_M,\Phi)$ is denoted by $\mathcal{F}_{M,D}$ \big(resp.\ $\mathcal{F}^0_{M,D}$\big).
\end{itemize}
\end{Definition}

Meromorphic extensions on $(M,D)$ are thus canonically isomorphic to submodules of maximal rank of the sheaf of tangent vector fields with poles at $D$. The restriction of the corresponding frame bundle to $M\setminus D$ can thus be canonically identified with the frame bundle of $M\setminus D$. This gives the following alternative description.

\begin{Definition}\label{soldermeromorphe}Let $(M,D)$ be a pair.
\begin{itemize}\itemsep=0pt
\item[(1)] Let $E\overset{p}{\longrightarrow} M$ be a holomorphic $P$-principal bundle and $\tilde{D}=p^{-1}(D)$. A \textit{meromorphic solderform} on $\big(E,\tilde{D}\big)$ is a $P$-equivariant $\mathbb{C}^n$-valued meromorphic 1-form $\theta_0$ on $E$, with poles supported at $\tilde{D}$, vanishing on $\ker({\rm d}p)$, and such that $\theta_0(e)$ is surjective for any $e\in E\setminus \tilde{D}$. A couple $(E,\theta_0)$ is called a meromorphic solder form over $(M,D)$
\item[(2)] An arrow between two meromorphic solderforms $(E,\theta_0)$ and $(E',\theta'_0)$ over $(M,D)$ and $(M',D')$ is an isomorphism of holomorphic $P$-principal bundles $\tilde{\Psi}\colon E \longrightarrow E'$ such that $\theta_0 = \tilde{\Psi}^\star \theta'_0$. This defines the category $\mathcal{D}$ of meromorphic solderforms over pairs.
\end{itemize}
\end{Definition}

\begin{Proposition}\label{solderequivalence} The map which to any meromorphic solder form $(E,\theta_0)$ over $(M,D)$ $($see Definition~{\rm \ref{soldermeromorphe})} associates the meromorphic extension $(\phi_0,\mathcal{E})$ where $\phi_0\colon TM(* D) \overset{\sim}{\longrightarrow} \mathcal{E}(* D)$ is the isomorphism which corresponds to $\theta_0$ $($see remarks above$)$, extends to an equivalence of categories $m\colon \mathcal{D} \longrightarrow \mathcal{E}$.
 \end{Proposition}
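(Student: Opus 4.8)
The plan is to obtain $m$ and its quasi-inverse directly from the correspondence recalled in Section~\ref{Atiyahframe}, specialized to $\mathbb{V}=\mathbb{C}^n$, together with the equivalence recalled there between holomorphic ${\rm GL}_n(\mathbb{C})$-principal bundles and locally free $\mathcal{O}_M$-modules of rank $n$. On objects: given a meromorphic solder form $(E,\theta_0)$ over $(M,D)$, the one-form $\theta_0$ is exactly a $P$-equivariant meromorphic $\mathbb{C}^n$-valued one-form on $E$ with poles at $\tilde D$ vanishing on $\ker({\rm d}p)$, so by that correspondence it is a section of $\Omega^1_M(* D)\otimes E(\mathbb{C}^n)$, equivalently a morphism $\phi_0\colon TM(* D)\longrightarrow E(\mathbb{C}^n)(* D)$ of $\mathcal{O}_M(* D)$-modules. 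First I would check $\phi_0$ is an isomorphism: surjectivity of $\theta_0(e)$ for $e\notin\tilde D$ together with the rank count $\operatorname{rk}E(\mathbb{C}^n)=n=\dim M$ makes $\phi_0$ an isomorphism over $M\setminus D$, and then the determinant of $\phi_0$ in local frames is a section of $\mathcal{O}_M(* D)$ whose zero divisor is supported on $D$, hence invertible in $\mathcal{O}_M(* D)$; so $\phi_0$ is an isomorphism of $\mathcal{O}_M(* D)$-modules and $m(E,\theta_0):=(\phi_0,E(\mathbb{C}^n))$ is a meromorphic extension of $(M,D)$.

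On arrows I would send an isomorphism $\tilde\Psi\colon E\to E'$ of $\mathcal{D}$ covering $\varphi$ to the pair $(\varphi,\Phi)$ with $\Phi=\tilde\Psi(\mathbb{C}^n)$ the representation isomorphism of associated vector bundles \eqref{representationmorphism}; functoriality ($m(\mathrm{id})=\mathrm{id}$ and compatibility with composition) is then immediate from the functoriality of $\mathbb{V}\mapsto E(\mathbb{V})$ and $\Psi\mapsto\Psi(\mathbb{V})$. The real point is that $(\varphi,\Phi)$ is an arrow of the category of meromorphic extensions, i.e.\ that the square of Definition~\ref{meromorphicextension}(3) commutes: I would prove $\Phi\circ\phi_0=(\varphi^*\phi'_0)\circ{\rm d}\varphi$ by unwinding the three objects $\phi_0$, $\phi'_0$, $\Phi$ through the equivalence between $P$-equivariant morphisms over principal bundles and morphisms of the associated modules over the base, and then using $\theta_0=\tilde\Psi^\star\theta'_0$. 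This is exactly the solder-form analogue of Lemma~\ref{lemmetau}, and it is the step I expect to be the main obstacle.

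For essential surjectivity and full faithfulness: given a meromorphic extension $(\phi_0,\mathcal{E})$, I would take $E$ to be the frame bundle of $\mathcal{E}$, so that $E(\mathbb{C}^n)=\mathcal{E}$ canonically, and read the Section~\ref{Atiyahframe} correspondence backwards to turn $\phi_0\colon TM(* D)\overset{\sim}{\longrightarrow}\mathcal{E}(* D)=E(\mathbb{C}^n)(* D)$ into a $P$-equivariant meromorphic $\mathbb{C}^n$-valued one-form $\theta_0$ on $\big(E,\tilde D\big)$ vanishing on $\ker({\rm d}p)$; restricting to the fibres over $M\setminus D$, the fact that $\phi_0$ is an isomorphism forces $\theta_0(e)$ to be surjective for $e\notin\tilde D$, so $(E,\theta_0)\in\mathcal{D}$ and $m(E,\theta_0)=(\phi_0,\mathcal{E})$ by construction. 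Finally, for two objects $(E,\theta_0)$, $(E',\theta'_0)$ of $\mathcal{D}$, the equivalence between holomorphic ${\rm GL}_n(\mathbb{C})$-principal bundles and rank-$n$ modules shows that $\tilde\Psi\mapsto\tilde\Psi(\mathbb{C}^n)$ is a bijection between isomorphisms of the principal bundles covering a fixed $\varphi$ and isomorphisms of the associated vector bundles covering $\varphi$, and the naturality invoked above shows that under this bijection the constraint $\theta_0=\tilde\Psi^\star\theta'_0$ matches commutativity of the extension square; hence $m$ is bijective on morphism sets. Everything except the naturality statement is bookkeeping with the equivalences of Sections~\ref{Atiyahframe} and~\ref{sectionprincipale}, glued over an open cover as in the proof of Proposition~\ref{correspondenceprincipal}.
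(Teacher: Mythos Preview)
Your approach is correct and essentially the same as the paper's: define $m$ on arrows via the associated-bundle functor $\tilde\Psi\mapsto\tilde\Psi(\mathbb{C}^n)$, verify that $\theta_0=\tilde\Psi^\star\theta'_0$ translates into commutativity of the extension square, and deduce the equivalence from the known equivalence of Section~\ref{Atiyahframe}. The paper is terser---it packages essential surjectivity and full faithfulness into the single observation that $m$ is the restriction of that equivalence to the appropriate subcategories---whereas you spell these out explicitly via the frame-bundle construction; but the underlying argument is identical.
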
\begin{proof}
 If $\tilde{\Psi}\colon E \longrightarrow E'$ is an arrow between two objects $(E,\theta_0)$ and $(E',\theta'_0)$ of the category of solderforms over $(M,D)$, we define $m(\tilde{\Psi})=\Phi$ as the image of $\tilde{\Psi}$ through the equivalence of categories described in Section~\ref{Atiyahframe}. Consider the images $(\phi_0,\mathcal{E})$ and $(\phi'_0,\mathcal{E})$ of $(E,\theta_0)$ and $(E',\theta'_0)$. Since $\theta'_0 = \tilde{\Psi}^\star \theta_0$, by definition, $\Phi \circ \phi_0 = \phi'_0$ so $m$ is an essentially surjective functor. Since it is the restriction of the equivalence of categories described in Section~\ref{Atiyahframe}, it is an equivalence of categories.
\end{proof}

Now let $(E,\omega_0)$ be any meromorphic $(G,P)$-Cartan geometry on $(M,D)$. Then the meromorphic one-form $\pi_{\mathfrak{g}/\mathfrak{p}}\circ \omega_0$ obtained by projecting $\omega_0$ on $\mathfrak{g}/\mathfrak{p}$ is $P$-equivariant for the quotient adjoint action on $\mathfrak{g}/\mathfrak{p}$, and pointwise surjective on $E\setminus p^{-1}(D)$. Moreover, its kernel contains $\ker({\rm d}p)$. By the Section~\ref{Atiyahframe}, it thus corresponds to a morphism of $\mathcal{O}_M$-modules
\begin{gather*}
 \phi_0 \colon\ TM(* D) \longrightarrow \mathcal{E}(* D),
 \end{gather*}
 where we set $\mathcal{E}=E(\mathfrak{g}/\mathfrak{p})$. By construction, $\phi_0$ is an isomorphism of meromorphic bundles and $(\mathcal{E},\phi_0)$ is thus a meromorphic extension on $(M,D)$.

\begin{Definition}\label{fonct} The meromorphic extension $(\mathcal{E},\phi_0)$ obtained as above is the \textit{meromorphic extension induced by} $(E,\omega_0)$. We denote by $f$ the map from the set of meromorphic $(G,P)$-Cartan geometries on pairs to the set of meromorphic extensions which maps $(E,\omega_0)$ to its induced meromorphic extension $(\mathcal{E},\phi_0)$. This extends as a functor $f$ between the corresponding categories. \end{Definition} 

\subsection{Infinitesimal automorphisms as horizontal sections}
We recall the definition of the infinitesimal automorphisms of a Cartan geometry and their description as horizontal sections for a connection (see \cite[Lemma 1.5.12]{Cap}) and extend straightforwardly these facts to the meromorphic setting.

\begin{Definition}\label{kill} Let $(E,\omega_0)$ be a meromorphic $(G,P)$-Cartan geometry on $(M,D)$. An \textit{infinitesimal automorphism} of $(E,\omega_0)$ is a holomorphic vector field $\overline{X}$ on an open subset $U\subset M\setminus D$, lifting to a vector field $X$ on $p^{-1}(U)$ such that $\phi_X^{t \star} \omega_0 = \omega_0$. We write $\mathfrak{k}\mathfrak{i}\mathfrak{l}\mathfrak{l}^{\rm loc}_{M,\omega_0}$ for the subsheaf of~$TM\setminus D$ whose sections are the local infinitesimal automorphisms, and $\mathfrak{k}\mathfrak{i}\mathfrak{l}\mathfrak{l}^{\rm loc}_{E,\omega_0}$ for the subsheaf of $TE\setminus \tilde{D}$ whose sections are the lifts of sections of $\mathfrak{k}\mathfrak{i}\mathfrak{l}\mathfrak{l}^{\rm loc}_{M,\omega_0}$. \end{Definition}

In order to study the sections of $\mathfrak{k}\mathfrak{i}\mathfrak{l}\mathfrak{l}^{\rm loc}_{M,\omega_0}$, it is convenient to identify them with horizontal sections for a meromorphic connection on a trivial module over $E$. This is also a classical approach for general meromorphic parallelisms (see, for example, \cite[Lemma~3.2]{BlazquezCasale}). Indeed, let us denote by $T$ the torsion of the flat meromorphic connection $\nabla^0$ whose horizontal sections are the $\omega_0$-constant vector fields on $E$. 

\begin{Proposition}\label{connkillprop} Let $(E,\omega_0)$ be a meromorphic $(G,P)$-Cartan geometry on $(M,D)$.
\begin{itemize}\itemsep=0pt
\item[$(1)$] The sheaf $\mathfrak{k}\mathfrak{i}\mathfrak{l}\mathfrak{l}^{\rm loc}_{E,\omega_0}$ coincides with the sheaf $\ker(\nabla^{\rm rec}_{\omega_0})$ of horizontal sections for the reciprocal connection $\nabla^{\rm rec}$ defined by
\[ \nabla^{\rm rec}_X = \nabla^0_X + T(X,\cdot)\]
 for any local vector field $X$.
\item[$(2)$] The connection $\nabla^{\rm rec}_{\omega_0}$ is invariant by the $P$-linearization $({\rm d}r_b)_{b\in P}$ corresponding to the action of principal $P$-bundle.
\end{itemize}
\end{Proposition}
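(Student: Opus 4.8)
The plan is to unwind the definitions of $\nabla^0$, its torsion $T$, and the reciprocal connection $\nabla^{\mathrm{rec}}$, and then translate the condition $\phi_X^{t\star}\omega_0 = \omega_0$ into a differential equation whose solutions are exactly the horizontal sections of $\nabla^{\mathrm{rec}}$. First I would set up the local picture: over $E\setminus\tilde D$, the $\omega_0$-constant vector fields $\widetilde A = \omega_0^{-1}(A)$, $A\in\mathfrak g$, form a (meromorphic) parallelism, i.e.\ a local holomorphic frame on the complement of the pole, and $\nabla^0$ is the unique flat connection on $TE(*D)$ making each $\widetilde A$ horizontal. By definition the torsion is $T(\widetilde A,\widetilde B) = \nabla^0_{\widetilde A}\widetilde B - \nabla^0_{\widetilde B}\widetilde A - [\widetilde A,\widetilde B] = -[\widetilde A,\widetilde B]$, so $T$ is just (minus) the bracket table of the parallelism; the structure equation for $\omega_0$ gives $[\widetilde A,\widetilde B] = -\omega_0^{-1}\big([A,B]_{\mathfrak g} - k_{\omega_0}(A,B)\big)$, which pins $T$ down explicitly in terms of the curvature function.

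Next I would prove part $(1)$. For a vector field $X$ on $p^{-1}(U)$, write $X = \omega_0^{-1}(f)$ for a $\mathfrak g$-valued meromorphic function $f$ on $p^{-1}(U)$; this identification $X\leftrightarrow f$ is exactly the map $\Phi_{\omega_0}$. The key computation is the classical formula for the Lie derivative of a parallelism-defining one-form: $\mathcal L_X\omega_0 = \mathrm d(\omega_0(X)) + \iota_X\,\mathrm d\omega_0$, and using $\omega_0(\widetilde C) = C$ together with the structure equation $\mathrm d\omega_0 = -\tfrac12[\omega_0,\omega_0]_{\mathfrak g} + \tfrac12 k_{\omega_0}(\omega_0,\omega_0)$, one gets that $\mathcal L_X\omega_0 = 0$ is equivalent, after applying $\omega_0^{-1}$, to a first-order linear PDE on $f$ of the form $\widetilde A\cdot f = (\text{bracket/curvature terms})$ for all $A$. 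I would then check that this is precisely the statement $\nabla^{\mathrm{rec}}_X X' = 0$ where $X'$ is the vector field with $\omega_0(X') = f$; concretely, $\nabla^{\mathrm{rec}}_{\widetilde A}X' = \nabla^0_{\widetilde A}X' + T(\widetilde A, X')$, and writing $X' = \sum f^i\widetilde{\mathfrak e_i}$ we have $\nabla^0_{\widetilde A}X' = \sum(\widetilde A\cdot f^i)\widetilde{\mathfrak e_i}$, while $T(\widetilde A,X') = \sum f^i T(\widetilde A,\widetilde{\mathfrak e_i})$ supplies exactly the bracket/curvature correction obtained from $\mathcal L_X\omega_0 = 0$. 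Matching the two expressions term by term gives the claimed identification $\mathfrak{kill}^{\mathrm{loc}}_{E,\omega_0} = \ker(\nabla^{\mathrm{rec}}_{\omega_0})$. One should also note that since $\mathfrak{kill}^{\mathrm{loc}}_{E,\omega_0}$ consists of lifts of vector fields on the base, one must additionally verify these are automatically $\mathrm dp$-projectable; but this follows from $P$-equivariance of $\omega_0$ (condition (ii)) and $\mathcal L_X\omega_0 = 0$, which forces $X$ to be $P$-invariant up to the fiber directions — in fact the whole sheaf $\ker(\nabla^{\mathrm{rec}})$ turns out to be $P$-invariant, which is the content of part $(2)$.

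For part $(2)$ I would argue as follows. The one-form $\omega_0$ is $P$-equivariant: $r_b^\star\omega_0 = \mathrm{Ad}(b^{-1})\circ\omega_0$. Hence $r_b$ sends the $\omega_0$-constant frame $\{\widetilde A\}$ to the $\omega_0$-constant frame $\{\widetilde{\mathrm{Ad}(b)A}\}$, i.e.\ $\mathrm dr_b\cdot\widetilde A = \widetilde{\mathrm{Ad}(b)A}$, which means $\mathrm dr_b$ maps the flat connection $\nabla^0$ to itself (the linearization permutes a horizontal frame into a horizontal frame, which is the statement that $\nabla^0$ is $P$-invariant). Since the torsion $T$ of $\nabla^0$ is built canonically from $\nabla^0$ and the Lie bracket of vector fields — both preserved by the diffeomorphisms $r_b$ — the section $T$ of $\Lambda^2 T^\star E\otimes TE$ is also $P$-invariant under $({\mathrm dr}_b)_{b\in P}$. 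Therefore $\nabla^{\mathrm{rec}}_X = \nabla^0_X + T(X,\cdot)$ is invariant under the $P$-linearization, being a sum of two $P$-invariant pieces. The main obstacle is purely bookkeeping: keeping the bracket/curvature terms from $\mathcal L_X\omega_0 = 0$ lined up correctly with $T(\widetilde A,\widetilde{\mathfrak e_i})$ including signs, and handling everything meromorphically (which is harmless since all identities are $\mathcal O_E(*\tilde D)$-linear and one may work on the dense open $E\setminus\tilde D$ where $\omega_0$ is a genuine parallelism, then extend by density of $E\setminus\tilde D$ and the fact that all sheaves involved are subsheaves of locally free modules with poles at $\tilde D$).
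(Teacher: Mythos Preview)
Your proposal is correct and follows the same approach as the paper. The paper's own proof is extremely terse: for part~(1) it simply cites \cite[Lemma~3.2]{BlazquezCasale}, and for part~(2) it says only that ``this straightforwardly follows from the fact that the torsion of $\nabla_0$ is $P$-invariant by definition.'' Your argument for part~(2) is exactly this reasoning, spelled out in full (equivariance of $\omega_0$ implies $\mathrm dr_b$ permutes the $\omega_0$-constant frame, hence $\nabla^0$ is $P$-invariant, hence so is its torsion, hence so is $\nabla^{\rm rec}$). For part~(1) you are reconstructing the content of the cited lemma via the Cartan formula $\mathcal L_X\omega_0 = \mathrm d(\iota_X\omega_0)+\iota_X\,\mathrm d\omega_0$ and matching against $\nabla^{\rm rec}_{\tilde A}X$, which is indeed the standard proof; your observation that projectability of horizontal sections must be checked separately (and follows from $P$-equivariance) is a point the paper leaves implicit. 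One caution: the phrase ``matching the two expressions term by term'' hides a sign bookkeeping step that is easy to get wrong---you should actually carry it out rather than assert it, since whether the correction is $+T(X,\cdot)$ or $-T(X,\cdot)$ depends on conventions, and the computation $\nabla^{\rm rec}_{\tilde A}X = \nabla^0_{\tilde A}X + T(\tilde A,X)$ with $T(\tilde A,\tilde{\mathfrak e}_i)=-[\tilde A,\tilde{\mathfrak e}_i]$ must be reconciled carefully with $[\tilde A,X]=0$.
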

\begin{proof} (1) See \cite[Lemma 3.2]{BlazquezCasale}.  (2) This straightforwardly follows from the fact that the torsion of $\nabla_0$ is $P$-invariant by definition.
\end{proof}

\begin{Definition}\label{connkill} The \textit{Killing connection} of a meromorphic $(G,P)$-Cartan geometry $(E,\omega_0)$ on~$(M,D)$ is the meromorphic connection $(\mathcal{V},\nabla^{\omega_0})$ where $\mathcal{V}=\mathcal{O}_E\otimes \mathfrak{g}$ and
\[\nabla^{\omega_0}= \Phi_{\omega_0}^{-1} \nabla^{\rm rec},\]
 where $\Phi_{\omega_0}$ is the isomorphism of $\mathcal{O}_E\bigl(* \tilde{D}\bigr)$-modules between $TE\bigl(* \tilde{D}\bigr)$ and $\mathcal{V}\bigl(* \tilde{D}\bigr)$.
\end{Definition}

The sheaves $\mathfrak{k}\mathfrak{i}\mathfrak{l}\mathfrak{l}_{E,\omega_0}$ and $\mathfrak{k}\mathfrak{i}\mathfrak{l}\mathfrak{l}_{M,\omega_0}$ are respectively local systems on $E\setminus \tilde{D}$ and $M\setminus D$.

As explained in the introduction, our goal is to classify quasi-homogeneous meromorphic Cartan geometries $(E,\omega_0)$. This hypothesis is satisfied whenever the base manifold $M$ has only constant meromorphic functions (see \cite[Theorem 1.2]{Dumitrescu1}). In this case, there exist a point $x_0 \in M$ and $n$ independent germs of Killing vector fields for $(E,\omega_0)$ at $x_0$. We seek for a sufficient condition for these germs to come from global Killing vector fields, i.e., for the following property to be satisfied.

\begin{Definition}\label{extensionproperty} Let $(E,\omega_0)$ be a meromorphic $(G,P)$-Cartan geometry on a pair $(M,D)$. It satisfies the \textit{extension property of infinitesimal automorphisms} if and only the local system $\mathfrak{k}\mathfrak{i}\mathfrak{l}\mathfrak{l}_{M,\omega_0}$ on $M\setminus D$ extends as a local system $\mathfrak{k}\subset TM$ on $M$. \end{Definition}

\subsection[Distinguished foliations and (strongly) spiral meromorphic Cartan geometries]{Distinguished foliations and (strongly) spiral meromorphic Cartan \\ geometries}\label{sectionspiral}

We now isolate a subcategory of meromorphic Cartan geometries for which the extension property will be easier to obtain. For, we will use the straightforward generalization of \textit{distinguished curves} (see \cite[p.~112]{Cap} and \cite[Definition~4.16]{Sharpe} or definitions below).

Let $(E,\omega_0)$ be a meromorphic $(G,P)$-Cartan geometry on $(M,D)$, and $A\in \mathfrak{g}\setminus \{0\}$. Since~$\omega_0$ induces an isomorphism of meromorphic bundles between $TE\bigl(* \tilde{D}\bigr)$ and $\mathcal{O}_E\bigl(* \tilde{D}\bigr) \otimes \mathfrak{g}$, there exists a unique distribution of rank one (thus integrable) $\mathcal{T}_A \subset TE$ with the following property:
 \begin{equation}\label{definitionTA} \omega_0(\mathcal{T}_A) \subset \mathcal{O}_E\bigl(* \tilde{D}\bigr) A. \end{equation}
 We will call it the $A$-\textit{distinguished foliation} of $(E,\omega_0)$, and a leaf $\tilde{\Sigma}$ will be called a $A$-\textit{distinguished curve} for $(E,\omega_0)$.

Let $A\in \mathfrak{g}\setminus \{0\}$ and $\tilde{\Sigma}$ a $A$-distinguished curve for $(E,\omega_0)$. If $A\in \mathfrak{p}$, then $\tilde{\Sigma}$ is tangent to the kernel $\ker({\rm d}p)$ of the differential of the bundle map. If $A\not \in \mathfrak{p}$, $\tilde{\Sigma}\cap \big(E\setminus \tilde{D}\big)$ is transverse to this distribution. Hence, the restriction of $p$ to $\tilde{\Sigma}$ is a cover map from $\tilde{\Sigma}$ to its image $\Sigma\subset M$. In particular, if $\Sigma$ is simply connected, then it is a biholomorphism.

 \begin{Definition}\label{spiral} Let $A\in \mathfrak{g}$ and $(E,\omega_0)$ a meromorphic $(G,P)$-Cartan geometry.
 \begin{itemize}\itemsep=0pt
 \item[(1)] A $A$-\textit{spiral} for $(E,\omega_0)$ at $x_0 \in M$ is a complex \textit{smooth} curve $\Sigma$ embedded in $M$, containing~$x_0$ and such that $\Sigma \setminus D$ lifts to a $A$-distinguished curve in $E$.

 \item[(2)] A \textit{holomorphic $A$-spiral} is a $A$-spiral $\Sigma$ such that the lift $\tilde{\Sigma}$ as in (1) extends to a curve $\tilde{\Sigma}$ projecting onto $\Sigma$. \end{itemize}
 \end{Definition}

\begin{Definition}\label{taucondition} A meromorphic $(G,P)$-Cartan geometry $(E,\omega_0)$ on a pair $(M,D)$ is \textit{spiral} if the following is true. For any irreducible component $D_\alpha$ of $D$, there exist $x_0\in D_\alpha$ and a~spiral~$\Sigma$ for $(E,\omega_0)$ with $\Sigma \cap D = \{x_0\}$
\end{Definition}

Given any holomorphic foliation $\mathcal{T}_A$ of rank one on a complex manifold $E$, there exists an analytic subset ${\rm Sing}(\mathcal{T}_A)$ of codimension at least 2 with the following property (see \cite[p.~11]{Brunella}). For any $e_0 \in E\setminus {\rm Sing}(\mathcal{T}_A)$, there is a neighborhood $U$ of $e_0$ and a nonvanishing holomorphic vector field $Z \in TE(U)$ with
\begin{gather*}
 \mathcal{T}_A\vert_U = \mathcal{O}_U Z.
\end{gather*}
 We say, in this case, that $Z$ \textit{defines} $\mathcal{T}_A$ over $U$, and the leaves of $\mathcal{T}_A$ in $U$ are exactly the orbits of the local flow for $Z$. From this remark, we can easily deduce the first part of the following lemma.

 \begin{Lemma}\label{invariantfoliation}\samepage Let $E$ and $\mathcal{T}_A$ be as above. Let $\tilde{D}$ be any submanifold~$E$.
 \begin{itemize}\itemsep=0pt \item[$(1)$] The following assertions are equivalent:
 \begin{itemize}\itemsep=0pt
\item[$(i)$] $\tilde{D}$ is a union of leaves for $\mathcal{T}_A$.
\item[$(ii)$] Any local vector field $Z$ defining $\mathcal{T}_A$ is everywhere tangent to $\tilde{D}$.
\item[$(iii)$] For any local equation $z_1$ of $U\cap \tilde{D}$, and local vector field $Z$ defining $\mathcal{T}_A$ over $U$, the dimension
 \begin{gather}\label{tangency} \dim_{\mathbb{C}}\, \mathcal{O}_{E,e_0}/\langle \mathcal{L}_{Z}(z_1), z_1 \rangle_{e_0} \end{gather}
is never finite for $e_0 \in \tilde{D}\cap U$.
\end{itemize}
We say, in this case, that the submanifold $\tilde{D}\subset E$ is \textit{invariant} by $\mathcal{T}_A$.
\item[$(2)$] If $\tilde{D}$ is not invariant by $\mathcal{T}_A$, then there exists an Zariksi-dense subset $\tilde{W}\subset \tilde{D}\setminus {\rm Sing}(\mathcal{T}_A)$ with the following property: for any $e_0\in \tilde{W}$, there exists a leaf $\tilde{\Sigma}$ of $\mathcal{T}_A$ through $e_0$ satisfying~$\tilde{\Sigma}\cap \tilde{D} = \{e_0\}$.
\end{itemize}
 \end{Lemma}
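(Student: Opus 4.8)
The plan is to treat part~(1) first, working---exactly as in the discussion preceding the statement---on $E\setminus{\rm Sing}(\mathcal{T}_A)$, where $\mathcal{T}_A$ is locally generated by a nonvanishing holomorphic vector field $Z$ whose integral curves are the plaques of the leaves. For (i)$\Rightarrow$(ii): if $\tilde D$ is a union of leaves, the integral curve of any local generator $Z$ through a point of $\tilde D$ stays in $\tilde D$, so $Z$ is tangent to $\tilde D$ along $\tilde D$. For (ii)$\Rightarrow$(i): if a generator $Z$ is everywhere tangent to $\tilde D$, then by uniqueness of integral curves its orbit through a point of $\tilde D$ remains in $\tilde D$, so $\tilde D\setminus{\rm Sing}(\mathcal{T}_A)$ is a union of leaves of the regular foliation $\mathcal{T}_A|_{E\setminus{\rm Sing}(\mathcal{T}_A)}$, and since this set is dense in $\tilde D$ (as ${\rm Sing}(\mathcal{T}_A)$ has codimension $\geq 2$ in $E$) one obtains~(i). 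For (ii)$\Leftrightarrow$(iii), I would first observe that both the ideal $\langle \mathcal{L}_Z(z_1),z_1\rangle_{e_0}$ and the tangency condition are unchanged when $z_1$ or $Z$ is multiplied by a unit, so the choices do not matter; taking $z_1$ a coordinate with $\tilde D=\{z_1=0\}$ near $e_0$, one has $\langle \mathcal{L}_Z(z_1),z_1\rangle_{e_0}=\langle z_1\rangle_{e_0}$ iff $\mathcal{L}_Z(z_1)$ vanishes on $\tilde D$ near $e_0$, iff $Z$ is tangent to $\tilde D$ at all nearby points of $\tilde D$. In that case the quotient in \eqref{tangency} is $\mathcal{O}_{\tilde D,e_0}$, of infinite dimension since $\dim\tilde D\geq 1$; if instead $Z$ is transverse to $\tilde D$ at some $e_1\in\tilde D\cap U$ then $\mathcal{L}_Z(z_1)(e_1)\neq 0$, the ideal is the whole local ring and the quotient vanishes. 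Hence (iii)---that \eqref{tangency} is never finite over $\tilde D\cap U$---is equivalent to $Z$ being everywhere tangent, i.e.\ to~(ii).

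For part~(2) I would argue as follows. Assume $\tilde D$ not invariant and (as in all our applications) irreducible. By part~(1) no local generator $Z$ is everywhere tangent to $\tilde D$, so the tangency locus $T=\{e\in\tilde D\setminus{\rm Sing}(\mathcal{T}_A):Z(e)\in T_e\tilde D\}$, cut out locally by $z_1$ and $\mathcal{L}_Z(z_1)$, is a \emph{proper} analytic subset of the connected complex manifold $\tilde D\setminus{\rm Sing}(\mathcal{T}_A)$ (here one uses that ${\rm Sing}(\mathcal{T}_A)$ has codimension $\geq 2$ in $E$, hence positive codimension in $\tilde D$, so the complement is connected and dense). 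Therefore $\tilde W:=\tilde D\setminus\big({\rm Sing}(\mathcal{T}_A)\cup T\big)$ is Zariski-dense in $\tilde D\setminus{\rm Sing}(\mathcal{T}_A)$. For $e_0\in\tilde W$ a local generator $Z$ is transverse to $\tilde D$ at $e_0$, i.e.\ $\mathcal{L}_Z(z_1)(e_0)\neq 0$, so $z_1$ restricts to a local coordinate vanishing only at $e_0$ on the integral curve of $Z$ through $e_0$; thus $e_0$ is isolated in $(\text{leaf through }e_0)\cap\tilde D$, and choosing a small ball $B\ni e_0$ the leaf $\tilde\Sigma$ of $\mathcal{T}_A|_{B}$ through $e_0$ satisfies $\tilde\Sigma\cap\tilde D=\{e_0\}$.

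The whole argument is elementary; the points needing care are, for (ii)$\Leftrightarrow$(iii), that ``\eqref{tangency} never finite'' quantifies over \emph{all} $e_0\in\tilde D\cap U$ and so fails the moment $\mathcal{L}_Z(z_1)$ is nonzero somewhere on $\tilde D$, and, in the two global steps (the end of (ii)$\Rightarrow$(i) and the density assertion in~(2)), propagating ``union of leaves'' across ${\rm Sing}(\mathcal{T}_A)$ and upgrading a purely local properness statement to Zariski-density---both relying on irreducibility of the relevant component of $\tilde D$ and on the fact that ${\rm Sing}(\mathcal{T}_A)$ has codimension $\geq 2$. I would regard this bookkeeping, rather than any substantial geometry, as the main obstacle; note also that ``leaf $\tilde\Sigma$'' in~(2) must be read as the plaque through $e_0$ (equivalently, the leaf of the restricted foliation $\mathcal{T}_A|_{B}$), since the maximal leaf may well re-meet $\tilde D$.
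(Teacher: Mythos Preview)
Your argument is correct and follows essentially the same route as the paper's own proof: for (1) the paper also treats (i)$\Leftrightarrow$(ii) as immediate from the local-flow description and reduces (ii)$\Leftrightarrow$(iii) to the interpretation of \eqref{tangency} as the order of tangency (citing Brunella), while for (2) it likewise shows that $\mathcal{L}_Z(z_1)$ is not divisible by $z_1$, hence $Z$ is generically transverse to $\tilde D$. Your write-up is in fact more careful than the paper's---you make explicit the independence of choices, the implicit irreducibility assumption on $\tilde D$, and the reading of ``leaf'' as a local plaque---all of which the paper leaves tacit.
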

\begin{proof}
(1) The equivalence between $(i)$ and $(ii)$ is clear from the above remark. The number~\eqref{tangency} is the order of tangency of $\mathcal{T}_A$ to $\tilde{D}$ at $e_0$ (see \cite[p.~13]{Brunella}).

(2) Let $z_1$ be a local equation for $\tilde{D}$, defined on an open subset $U\subset E$ where we can find a~holomorphic vector field $Z$ defining $\mathcal{T}_A$. Then the dimension \eqref{tangency} is zero except for a finite number of points in $\tilde{D}\cap U$ (see \cite[p.~13]{Brunella}). Complete $z_1$ into local coordinates $(z_1,\dots,z_n)$ on $U$, and decompose $Z=h \der{}{z_1} + Z'$ where $h$ is a holomorphic function on~$U$ and $Z'$ is a~holomorphic vector field on $U$ which belongs to the submodule spanned by $\der{}{z_2},\dots,\der{}{z_n}$. Since $\mathcal{L}_{Z'}(z_1)=0$, the previous fact implies that $h$ is not a multiple of~$z_1$. This means that $Z$ is generically transverse to $\tilde{D}\cap U$, and the leaves of $\mathcal{T}_A\vert_U$ are so, completing the proof of~(2).
 \end{proof}

 \begin{Definition}\label{strongtaucondition} Let $(E,\omega_0)$ be a meromorphic $(G,P)$-Cartan geometry on a pair $(M,D)$, and let $\big(\tilde{D}_\alpha\big)_{\alpha\in I}$ be the irreducible components of the divisor $\tilde{D}=p^{-1}(D)$. We say that $(E,\omega_0)$ is \textit{strongly spiral} if for any $\alpha \in I$, there exists $A\in \mathfrak{g}$ such that $\tilde{D}_\alpha$ is not invariant by $\mathcal{T}_A$.
 \end{Definition}

In what follows, we prove that in the case of a branched holomorphic Cartan geometry $(E,\omega_0)$ on a pair $(M,D)$ (see Definition~\ref{branchedCartan}), any spiral (see Definition~\ref{spiral}) crossing the polar locus~$D$ lifts to a distinguished curve (see paragraph before Definition~\ref{spiral}) crossing the lift $\tilde{D}$ of the polar locus. In this sense, the fact that $(E,\omega_0)$ is strongly spiral can be detected by considering the corresponding meromorphic geometric structure on $(M,D)$.

\begin{Lemma}\label{branchedfoliation} Let $(E,\omega_0)$ be a branched holomorphic $(G,P)$-Cartan geometry on $(M,D)$. Let $A\in \mathfrak{g}\setminus \mathfrak{p}$. Then the foliation $\mathcal{T}_A$ is transverse to $\ker({\rm d}p)$. In particular, for any complementary subspace $\mathfrak{g}_-$ of $\mathfrak{p}$, we obtain a holomorphic foliation of $E$ which is transverse to $\ker({\rm d}p)$. \end{Lemma}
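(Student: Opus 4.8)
The plan is to reduce the statement to a pointwise one at a generic point of $E$ and to exploit that the ``branched'' hypothesis forces $\omega_0$ to be holomorphic. Fix $e_0\in E\setminus{\rm Sing}(\mathcal{T}_A)$; by the description of ${\rm Sing}(\mathcal{T}_A)$ recalled before Lemma~\ref{invariantfoliation}, $\mathcal{T}_A$ is spanned on a neighbourhood of $e_0$ by a nowhere vanishing holomorphic vector field $Z$, and it is enough to prove that $Z(e_0)\notin\ker({\rm d}p)_{e_0}$. Since on $E\setminus\tilde{D}$ one has $\mathcal{T}_A=\mathcal{O}_E\,\omega_0^{-1}(A)$ with $\omega_0^{-1}(A)$ nowhere zero, ${\rm Sing}(\mathcal{T}_A)$ is contained in $\tilde{D}$ and has codimension at least $2$, so this pointwise transversality on the complement of ${\rm Sing}(\mathcal{T}_A)$ is exactly the transversality of the foliation.

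The one preliminary fact I would establish is that condition~(i) of Definition~\ref{Cartan}, a priori valid only over $M\setminus D$, holds over all of $E$: for every $e\in E$, $\omega_0(e)$ sends $\ker({\rm d}p)_e$ isomorphically onto $\mathfrak{p}\subset\mathfrak{g}$, namely by the inverse of the fundamental vector field map $X\mapsto X^{\dagger}(e)$. Indeed, composing the holomorphic trivialisation $E\times\mathfrak{p}\overset{\sim}{\to}\ker({\rm d}p)$, $(e,X)\mapsto X^{\dagger}(e)$, with $\omega_0$ (holomorphic, since $(E,\omega_0)$ is branched) gives a holomorphic bundle morphism $E\times\mathfrak{p}\to E\times\mathfrak{g}$ which, by condition~(i), agrees with the constant inclusion $\mathfrak{p}\hookrightarrow\mathfrak{g}$ over the dense open set $E\setminus\tilde{D}$, hence everywhere. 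In particular $\omega_0(\ker({\rm d}p))\subset\mathfrak{p}$ and $\omega_0|_{\ker({\rm d}p)_e}$ is injective for every $e\in E$. Now, by \eqref{definitionTA}, $\omega_0(Z)$ is a section of $\mathcal{O}_E(*\tilde{D})A$; being holomorphic (the evaluation of the holomorphic form $\omega_0$ on the holomorphic field $Z$), it equals $g\,A$ for some holomorphic function $g$. If $Z(e_0)\in\ker({\rm d}p)_{e_0}$, then $\omega_0(Z(e_0))=g(e_0)A\in\mathfrak{p}$; but $g(e_0)\neq0$ would give $A\in\mathfrak{p}$, against the hypothesis, while $g(e_0)=0$ would give $\omega_0(Z(e_0))=0$ and hence $Z(e_0)=0$ by injectivity, against $Z$ being nowhere vanishing. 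Thus $Z(e_0)\notin\ker({\rm d}p)_{e_0}$, which proves the first assertion.

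For the last sentence, fix a complement $\mathfrak{g}_-$ of $\mathfrak{p}$ and consider the rank-$n$ holomorphic distribution on $E$ spanned by the $\mathcal{T}_A$ with $A\in\mathfrak{g}_-$, which on $E\setminus\tilde{D}$ is the $\omega_0$-preimage of $\mathfrak{g}_-$. The same argument applies: a local holomorphic frame $Z_1,\dots,Z_n$ of it satisfies $\omega_0(Z_i)\in\mathcal{O}_E\otimes\mathfrak{g}_-$ (holomorphicity again), so any $v$ in the intersection of this distribution with $\ker({\rm d}p)_{e_0}$ has $\omega_0(v)\in\mathfrak{g}_-\cap\mathfrak{p}=\{0\}$, whence $v=0$ by injectivity; thus the distribution is transverse to $\ker({\rm d}p)$. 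The only delicate point throughout is the preliminary fact: the Maurer--Cartan normalisation of $\omega_0$ along the fibres is imposed only away from $D$, and one genuinely needs the branched hypothesis to propagate it across $\tilde{D}$ by analytic continuation (for a general meromorphic Cartan geometry, where $\omega_0$ may have poles on $\tilde{D}$, the conclusion fails); the remaining steps are linear algebra in $\mathfrak{g}$ together with the codimension bound on ${\rm Sing}(\mathcal{T}_A)$.
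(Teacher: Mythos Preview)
Your proof of the main assertion is correct and noticeably cleaner than the paper's. Both arguments hinge on the same preliminary fact---that the Maurer--Cartan normalisation $\omega_0|_{\ker({\rm d}p)}\colon\ker({\rm d}p)\overset{\sim}{\to}E\times\mathfrak{p}$, imposed only on $E\setminus\tilde{D}$ by Definition~\ref{Cartan}\,(i), propagates across $\tilde{D}$ by holomorphicity of $\omega_0$. In the paper this appears as the block decomposition $Q=\left(\begin{smallmatrix}A&0\\B&C\end{smallmatrix}\right)$ with $C^{-1}$ holomorphic; you isolate it explicitly and then finish with a two-line dichotomy ($g(e_0)\neq0$ forces $A\in\mathfrak{p}$, $g(e_0)=0$ forces $Z(e_0)=0$), whereas the paper carries out a longer computation tracking orders of vanishing of the entries of $Q^{-1}$ along each component $\tilde{D}_\alpha$. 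Your route is shorter and more transparent; what the paper's computation buys in return is the explicit order inequality \eqref{ordres} between the horizontal and vertical parts of the constant vector fields, which is not needed for the lemma itself but is reused verbatim later (see the proof of Proposition~\ref{necessarynottau}).

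One minor point on the ``in particular'' clause: asserting that the span of the $\mathcal{T}_A$ for $A\in\mathfrak{g}_-$ is a rank-$n$ distribution, and that a local frame $Z_i$ of it satisfies $\omega_0(Z_i)\in\mathcal{O}_E\otimes\mathfrak{g}_-$, is not quite immediate at points of $\tilde{D}$ (the generators of the individual $\mathcal{T}_{\mathfrak{e}_i}$ could a priori become dependent there). A cleaner way to obtain the rank-$n$ foliation, using exactly your preliminary fact, is to take the kernel of the holomorphic composite $TE\xrightarrow{\omega_0}\mathcal{O}_E\otimes\mathfrak{g}\twoheadrightarrow\mathcal{O}_E\otimes\mathfrak{p}$: your preliminary fact makes this map surjective with kernel transverse to $\ker({\rm d}p)$ everywhere. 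The paper does not spell this step out either, so this is not a defect relative to the original.
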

\begin{proof}
Let $e_0\in E\setminus {\rm Sing}(\mathcal{T}_A)$, and $U$ be an open neighborhood of $e_0$ in $E\setminus {\rm Sing}(\mathcal{T}_A)$ equipped with coordinates $(z_1,\dots,z_N)$, with the property that $\der{}{z_{n+1}},\dots,\der{}{z_{N}}$ are sections of $\ker({\rm d}p)$ (i.e., vertical vector fields). Fix a basis $(\mathfrak{e}_1,\dots,\mathfrak{e}_N)$ of $\mathfrak{g}$ obtained by completing a basis $(\mathfrak{e}_{n+1},\dots,\mathfrak{e}_N)$ of $\mathfrak{p}$.

Since $(E,\omega_0)$ is a branched holomorphic Cartan geometry, and by the property $(i)$ of Definition~\ref{Cartan}, the matrix $Q=(q_{ij})_{i,j=1,\dots,N}$ of $\omega_0$ in the previous basis takes the form 
 \begin{align*} Q = \begin{pmatrix}
 A & 0 \\ B & C \end{pmatrix}, \end{align*}
 where
  \begin{itemize}\itemsep=0pt
\item $A$, $B$, $C$ are holomorphic matrices on $U$,
\item $C'=C^{-1}$ is a holomorphic matrix $U$.
\end{itemize}

The matrix of the $\omega_0$-constant vector fields associated with $(\mathfrak{e}_j)_{j=1,\dots,N}$ in $\big(\der{}{z_i}\big)_{i=1,\dots,N}$ is $Q^{-1}$ and therefore has the form
\begin{align*}
Q^{-1}= \begin{pmatrix} A_1 & 0 \\ B_1 & C_1\end{pmatrix} \end{align*}
 with
\[
CB_1+BA_1 =0.
\]
Consider any irreducible component $\tilde{D}_\alpha$ of $\tilde{D}=p^{-1}(D)$, and any $j\in \{n+1,\dots,N\}$. Since $C^{-1}$ is holomorphic on $U$, the above equation implies that for any vector in $\mathbb{C}^n$
\begin{align}\label{ordres}
 \operatorname{ord}_{\tilde{D}_\alpha \cap U}  \left(B_1 \begin{pmatrix} a_1\\ \vdots \\ a_n \end{pmatrix}\right)_j \geq \min_{i=1,\dots,n}
\operatorname{ord}_{\tilde{D}_\alpha \cap U}  \left( A_1\begin{pmatrix}a_1\\\vdots \\ a_n \end{pmatrix}\right)_i, \end{align}
 where the subscript stands for the $i$-th component.

We now interpret the inequality \eqref{ordres} geometrically. Let $Z$ be a holomorphic vector field defining $\mathcal{T}_A$ on $U$. Define $\left(\begin{smallmatrix}a_1\\ \vdots \\a_N\end{smallmatrix}\right)$ to be the coordinates of $A$ in the basis $(\mathfrak{e}_i)_{i=1,\dots,N}$. Thus
 \begin{gather*}
 Z= h \tilde{A} \end{gather*}
 with $h$ a meromorphic function on $U$ satisfying
 \begin{align*} \operatorname{ord}_{\tilde{D}_\alpha \cap U} (h)&{}= - \min_{j=1,\dots,N} \operatorname{ord}_{\tilde{D}_\alpha \cap U}\Bigg(\sum_{i=1}^{N} q^{-1}_{ji} a_i\Bigg)\\
 &= -\min\left(\min_{i'=1,\dots,n} \operatorname{ord}_{\tilde{D}_\alpha \cap U}\left(\!A_1 \begin{pmatrix} a_1 \\ \vdots \\ a_n \end{pmatrix}\right)_{i'}, \min_{j'=1,\dots,n} \operatorname{ord}_{\tilde{D}_\alpha \cap U} \left(\!B_1 \begin{pmatrix} a_1 \\ \vdots \\ a_n \end{pmatrix}\right)_{j'}\right).
 \end{align*}
Using \eqref{ordres}, we obtain that
\[\operatorname{ord}_{\tilde{D}_\alpha\cap U}(h) = - \min_{i'=1,\dots,n} \operatorname{ord}_{\tilde{D}_\alpha \cap U}\left(A_1 \left(\begin{matrix} a_1 \\ \vdots \\ a_n \end{matrix}\right)\right)_{i'}.\]

Decompose $Z=Z'+Z''$ with $Z'$ in the subsheaf of holomorphic vector fields spanned by $\big(\der{}{z_i}\big)_{i=1,\dots,n}$ and $Z''$ in the subsheaf $\ker({\rm d}p)$. Then the coordinates of $Z'$ (resp.\ $Z''$) in $\big(\der{}{z_i}\big)_{i=1,\dots,n}$ \big(resp.\ $\big(\der{}{z_i}\big)_{i=n+1,\dots,N}\big)$ are
\[
h A_1 \begin{pmatrix} a_1\\ \vdots \\ a_n \end{pmatrix}, \qquad \text{respectively},\qquad
h B_1 \begin{pmatrix} a_1 \\ \vdots \\ a_n \end{pmatrix} + h C_1 \begin{pmatrix} a_{n+1}\\ \vdots \\ a_N \end{pmatrix}.\]
 Using the above remark, and \eqref{ordres} again, we conclude that $Z'$ has order zero along $\tilde{D}_\alpha \cap U$ since $a_{i_0}\neq 0$ for some $i_0 \in \{1,\dots,n\}$, while $Z''$ is holomorphic. Since $Z'$ never vanishes on the regular part of $(E,\omega_0)$ (up to restriction of $U$), we conclude that $Z$ is nowhere tangent to $\ker({\rm d}p)$, concluding the proof.
\end{proof}

\begin{Proposition}\label{branchedtaucondition} Let $(E,\omega_0)$ be a branched holomorphic $(G,P)$-Cartan geometry on $(M,D)$. Suppose that no irreducible component $D_\alpha$ of $D$ is invariant by the spirals of $(E,\omega_0)$. Then $(E,\omega_0)$ is strongly spiral. \end{Proposition}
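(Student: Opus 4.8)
The plan is to transport the hypothesis---which concerns the divisor $D\subset M$ and the spirals of the geometry---up to the total space $E$ along the bundle projection $p$, and there to recognize the condition of Definition~\ref{strongtaucondition} by means of Lemma~\ref{invariantfoliation}(1). The bridge between the two pictures is Lemma~\ref{branchedfoliation}: since $(E,\omega_0)$ is branched holomorphic (Definition~\ref{branchedCartan}), for every $A\in\mathfrak{g}\setminus\mathfrak{p}$ the foliation $\mathcal{T}_A$ of \eqref{definitionTA} is everywhere---in particular along $\tilde D=p^{-1}(D)$---transverse to $\ker({\rm d}p)$. Hence, off ${\rm Sing}(\mathcal{T}_A)$, each leaf of $\mathcal{T}_A$ is locally the graph of a holomorphic section of $p$, so it projects biholomorphically onto a smooth curve of $M$; and conversely the lift over $M\setminus D$ of an $A$-spiral (which, by Definition~\ref{spiral}, is a leaf of $\mathcal{T}_A$ over the punctured curve) prolongs, across the point of $D$ met by the spiral, to an honest leaf of $\mathcal{T}_A$ crossing $\tilde D$ and still projecting onto the whole spiral. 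The first step is to establish this carefully; it is precisely the assertion announced just before Lemma~\ref{branchedfoliation}, namely that for a branched holomorphic Cartan geometry every spiral crossing $D$ lifts to a distinguished curve crossing $\tilde D$.

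With this in hand the proposition is short. Fix an arbitrary irreducible component $D_\alpha$ of $D$; by Definition~\ref{strongtaucondition} it suffices to produce some $A\in\mathfrak{g}$ for which $\tilde D_\alpha=p^{-1}(D_\alpha)$ is not invariant by $\mathcal{T}_A$. Since $D_\alpha$ is not invariant by the spirals of $(E,\omega_0)$, there are a point $x_0\in D_\alpha$ and a spiral $\Sigma$ through $x_0$ with $\Sigma\not\subset D_\alpha$; writing $\Sigma$ as an $A$-spiral (Definition~\ref{spiral}) we have $A\notin\mathfrak{p}$, since an $A$-distinguished curve with $A\in\mathfrak{p}$ is vertical and cannot project onto a one-dimensional curve. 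By the first paragraph, $\Sigma$ lifts---through a suitable point $e_0$ of the fiber $p^{-1}(x_0)$ lying off ${\rm Sing}(\mathcal{T}_A)$---to a leaf $\tilde\Sigma$ of $\mathcal{T}_A$ with $p(\tilde\Sigma)=\Sigma$. Then $e_0\in p^{-1}(x_0)\subset p^{-1}(D_\alpha)=\tilde D_\alpha$, whereas $\tilde\Sigma$ is not contained in $\tilde D_\alpha$---otherwise $\Sigma=p(\tilde\Sigma)\subset p(\tilde D_\alpha)=D_\alpha$, against the choice of $\Sigma$. So the leaf of $\mathcal{T}_A$ through the point $e_0\in\tilde D_\alpha$ escapes $\tilde D_\alpha$; hence $\tilde D_\alpha$ is not a union of leaves of $\mathcal{T}_A$, and therefore not invariant by $\mathcal{T}_A$ by Lemma~\ref{invariantfoliation}(1). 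As $D_\alpha$ was arbitrary, $(E,\omega_0)$ is strongly spiral.

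The step needing genuine care is the lifting used in the first paragraph: that $\tilde\Sigma\setminus\tilde D$---a leaf of $\mathcal{T}_A$ over the punctured spiral---really does extend across the fiber $p^{-1}(x_0)$ to a leaf meeting $\tilde D$, and that the resulting contact point can be taken off ${\rm Sing}(\mathcal{T}_A)$. This is where the branched hypothesis enters concretely, through the decomposition appearing in the proof of Lemma~\ref{branchedfoliation}: a local generator $Z$ of $\mathcal{T}_A$ splits as $Z'+Z''$ with $Z''$ vertical and holomorphic and $Z'$ of order zero along $\tilde D_\alpha$, so that $\mathcal{T}_A$ stays transverse to $\ker({\rm d}p)$ up to and including $\tilde D$; one then arranges that the relevant branch of the leaf accumulates at a regular point of $\mathcal{T}_A$ in the fiber---possible since the spirals through points of $D_\alpha$ sweep out a dense subset while ${\rm Sing}(\mathcal{T}_A)$ has codimension at least two. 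Everything else is routine manipulation of the leaf--spiral dictionary together with Lemma~\ref{invariantfoliation}(1).
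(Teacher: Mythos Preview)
Your argument is correct and matches the paper's approach: use the hypothesis to find a spiral $\Sigma\not\subset D_\alpha$, invoke Lemma~\ref{branchedfoliation} to extend its lift to an $A$-distinguished curve $\tilde\Sigma$ crossing $\tilde D_\alpha$, and conclude that $\tilde D_\alpha$ is not $\mathcal{T}_A$-invariant. The paper phrases the extension step a bit more cleanly: since $\mathcal{T}_A$ is transverse to $\ker({\rm d}p)$ even over $D$ (Lemma~\ref{branchedfoliation}), its restriction to $E|_{\Sigma}$ is the horizontal distribution of a holomorphic $P$-principal connection on the whole of $E|_\Sigma$, so the horizontal section $\tilde\Sigma$ over $\Sigma\setminus D_\alpha$ extends across $x_0$ automatically---this viewpoint makes your final-paragraph concerns about ${\rm Sing}(\mathcal{T}_A)$ unnecessary.
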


\begin{proof} Pick an irreducible component $\tilde{D}_\alpha$ of $\tilde{D}=p^{-1}(D)$. We have to prove that there exist $A\in \mathfrak{g}$ and a $A$-distinguished curve $\tilde{\Sigma}$ transverse to $\tilde{D}_\alpha$. But by the hypothesis, there exists a spiral $\Sigma$ of $(E,\omega_0)$ which is transverse to $D_\alpha = p\big(\tilde{D}_\alpha\big)$. Thus, there exist $A\in \mathfrak{g}\setminus \mathfrak{p}$ and a~$A$-distinguished curve $\tilde{\Sigma}$ with $p\big(\tilde{\Sigma}\big)= \Sigma \setminus D_\alpha$.

Remark that $\mathcal{T}_A$ is the kernel of an unique holomorphic $P$-principal connection on the restricted bundle $E\vert_{\Sigma\setminus D_\alpha}$, and $\tilde{\Sigma}$ is a horizontal section of this connection. By Lemma~\ref{branchedfoliation}, this holomorphic principal connection extends as a holomorphic principal connection on $E\vert_{\Sigma}$. Hence, $\tilde{\Sigma}$ extends as a horizontal section of $E\vert_{\Sigma}$, i.e., $p\big(\tilde{\Sigma}\big)=\Sigma$.
Up to restriction of $\Sigma$, we can assume that $\Sigma \cap D_\alpha =\{x_0\}$. Thus $\tilde{\Sigma}$ intersects $\tilde{D}_\alpha$ in some point in the fiber of $x_0$, concluding the proof.\looseness=1
\end{proof}

\section{Single-valued infinitesimal automorphisms of meromorphic \allowbreak parabolic geometries}\label{killparab}
A classical result in Riemannian geometry states that any Killing vector field $X$ for a Riemannian metric $g$ is a \textit{Jacobi field}: for any geodesic $\gamma$, its scalar product $g(X(\gamma(t)),\gamma'(t)))$ with the velocity of $\gamma$ is constant. There is a natural generalization of Riemannian metrics to the holomorphic category, and the corresponding objects are equivalent to torsionfree holomorphic affine connections preserving a holomorphic reduction to the orthogonal group. The holomorphic version of the previous result can be seen as a result on some torsionfree holomorphic affine Cartan geometries (see Corollary~\ref{equivalencetorsion}).

In this section, we will see a general result for meromorphic Cartan geometries (see Theorem~\ref{Lie-Cartan}). We then restrict to a subclass of geometries, namely the \textit{regular parabolic} ones (see Section~\ref{sectionparabolique}). For this category of meromorphic Cartan geometries, the previous result implies an extension theorem for the local system of infinitesimal automorphisms, which can be seen as a generalization of the above facts.

\subsection{Bott connections and infinitesimal automorphisms of Cartan geometries}\label{sectionBott}

Consider a complex Klein geometry $(G,P)$. Let $(M,D)$ be a complex pair of dimension $n\geq 1$, and $(E,\omega_0)$ be a meromorphic $(G,P)$-Cartan geometry on it. Fix $A \in \mathfrak{g} \setminus \{0\}$ and consider the holomorphic foliation $\mathcal{T}_A$ from \eqref{definitionTA}. To any such holomorphic foliation is associated a $\mathcal{T}_A$-partial holomorphic connection $\nabla^{\mathcal{T}_A}$ on $TE/\mathcal{T}_A$, the \textit{Bott-connection} of $\mathcal{T}_A$, defined as follow. Let $X$ be a holomorphic vector field on $U\subset E$, $[X]$ its class in $TE/\mathcal{T}_A(U)$, and $Z \in \mathcal{T}_A(U)$. Then
\begin{gather}\label{Bottconnection} Z\lrcorner \nabla^{\mathcal{T}_A}([X]) = [ [Z,X]_{TE} ]. \end{gather}
 Let $t\in \mathbb{C}$ and $V\subset U$ such that the flow $\phi=\phi^t_Z$ is well defined on $V$. Then clearly ${\rm d}\phi(\mathcal{T}_A) \subset \phi^* \mathcal{T}_A$, so $\phi$ induces a morphism $[{\rm d}\phi]$ of $\mathcal{O}_{V}$-modules defined by the commutative diagram
 \begin{gather}\label{dphiBott}
 \begin{split}
 & \xymatrix{ TV \ar[r]^-{{\rm d}\phi} \ar[d]_{[]} & \phi^* T\phi(V) \ar[d]^{\phi^* []} \\ TV/\mathcal{T}_A \ar[r]_-{[{\rm d}\phi]} & \phi^* T\phi(V)/\mathcal{T}_A.}
 \end{split} \end{gather}
 By the formula \eqref{Bottconnection}, the horizontal sections for $\nabla^{\mathcal{T}_A}$ are the $[X]$ which are invariant by the isomorphisms of holomorphic vector bundle $(\phi,[{\rm d}\phi])$ defined as before.

It will be more convenient to work with the images of meromorphic vector fields on $E$ under the isomorphism induced by the Cartan connection
 \begin{equation}\label{Phi}\Phi_{\omega_0}\colon\ TE\bigl(* \tilde{D}\bigr) \longrightarrow \mathcal{V}\bigl(* \tilde{D}\bigr),
 \end{equation}
 where $\mathcal{V}=\mathcal{O}_E\otimes \mathfrak{g}$. We will write
 \begin{equation}\label{K} \mathcal{K} = \Phi_{\omega_0}(\mathfrak{k}\mathfrak{i}\mathfrak{l}\mathfrak{l}_{E,\omega_0}) \end{equation}
 for the corresponding local system on $E\setminus \tilde{D}$. Clearly, the image of $\mathcal{T}_A\bigl(* \tilde{D}\bigr)$ is $\mathcal{V}_A=\mathcal{O}_E\bigl(* \tilde{D}\bigr) A$. The class of a section $s$ of $\mathcal{V}\bigl(* \tilde{D}\bigr)(U)$ (where $U\subset E$ is an open subset) in $\mathcal{V}/\mathcal{V}_A\bigl(* \tilde{D}\bigr)$ will be denoted by $[s]_{\mathcal{V}/\mathcal{V}_A}$. Since $\Phi_{\omega_0}$ induces an isomorphism of $\mathcal{O}_E$-modules between $TE/\mathcal{T}_A\bigl(* \tilde{D}\bigr)$ and $\mathcal{V}/\mathcal{V}_A\bigl(* \tilde{D}\bigr)$, for any $Z\in \mathcal{T}_A(U)$, the morphism $[{\rm d}\phi]$ defined by \eqref{dphiBott} corresponds to an isomorphism
 \begin{gather}\label{dphibarBott}\overline{{\rm d}\phi}\colon\ \mathcal{V}/\mathcal{V}_A\bigl(* \tilde{D}\bigr)\vert_V \longrightarrow \phi^* \mathcal{V}/\mathcal{V}_A\bigl(* \tilde{D}\bigr)\vert_{\phi(V)}
 \end{gather}
 and thus an isomorphism $\big(\phi,\overline{{\rm d}\phi}\big)$ of meromorphic bundles.

The isomorphism of meromorphic bundles $\Phi_{\omega_0}$ (see above) maps $\mathcal{T}_A\bigl(* \tilde{D}\bigr)$ to $\mathcal{V}_A\bigl(* \tilde{D}\bigr)$, and we denote by $\overline{\Phi}_{\omega_0}\colon TE/\mathcal{T}_A\bigl(* \tilde{D}\bigr) \longrightarrow \mathcal{V}/\mathcal{V}_A\bigl(* \tilde{D}\bigr)$ the isomorphism induced by $\Phi_{\omega_0}$. 

\begin{Lemma}\label{killBott} Let $s$ be a section of $\mathcal{K}$ on an open subset $U\subset E\setminus \tilde{D}$. Then its class $[s]_{\mathcal{V}/\mathcal{V}_A}$ is invariant by any isomorphism of meromorphic bundles $\big(\phi,\overline{{\rm d}\phi}\big)$ constructed as above. \end{Lemma}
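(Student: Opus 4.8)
The plan is to translate the statement back to the vector-field side, where it becomes a statement about the Bott connection, and then use the characterisation of infinitesimal automorphisms as horizontal sections of the reciprocal connection. First I would fix $Z \in \mathcal{T}_A(U)$, a value of $t$, and an open $V \subset U$ on which the flow $\phi = \phi^t_Z$ is defined, so that the isomorphism $\bigl(\phi, \overline{{\rm d}\phi}\bigr)$ of \eqref{dphibarBott} makes sense. Since $\Phi_{\omega_0}$ and its induced map $\overline{\Phi}_{\omega_0}$ are isomorphisms of meromorphic bundles intertwining $\bigl(\phi, [{\rm d}\phi]\bigr)$ with $\bigl(\phi, \overline{{\rm d}\phi}\bigr)$, it suffices to prove that the class $[\Phi_{\omega_0}^{-1}(s)]$ in $TE/\mathcal{T}_A\bigl(*\tilde D\bigr)$ is invariant by $\bigl(\phi, [{\rm d}\phi]\bigr)$; equivalently, by the observation following \eqref{dphiBott}, that $[\Phi_{\omega_0}^{-1}(s)]$ is a horizontal section of the Bott connection $\nabla^{\mathcal{T}_A}$, i.e.\ that $Z \lrcorner \nabla^{\mathcal{T}_A}\bigl([\Phi_{\omega_0}^{-1}(s)]\bigr) = 0$.

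Now set $X = \Phi_{\omega_0}^{-1}(s)$, which by hypothesis is a section of $\mathfrak{k}\mathfrak{i}\mathfrak{l}\mathfrak{l}_{E,\omega_0}$ over $U \cap (E\setminus\tilde D)$, hence (Proposition~\ref{connkillprop}(1)) a horizontal section of $\nabla^{\rm rec} = \nabla^0 + T(\,\cdot\,,\,\cdot\,)$, where $\nabla^0$ is the flat meromorphic connection whose horizontal sections are the $\omega_0$-constant vector fields and $T$ is its torsion. The key step is the computation of $[Z,X]_{TE}$ modulo $\mathcal{T}_A$. Writing $\nabla^0_Z X - \nabla^0_X Z - T(Z,X) = [Z,X]_{TE}$ (this is just the definition of the torsion of $\nabla^0$) and using $\nabla^{\rm rec}_Z X = \nabla^0_Z X + T(Z,X) = 0$, I get
\begin{gather*}
[Z,X]_{TE} = -2\,T(Z,X) - \nabla^0_X Z.
\end{gather*}
Since $Z$ is a section of $\mathcal{T}_A\bigl(*\tilde D\bigr) = \Phi_{\omega_0}^{-1}\bigl(\mathcal{O}_E\bigl(*\tilde D\bigr)A\bigr)$, so that $\omega_0(Z)$ takes values in $\mathbb{C}A$, and since in the $\omega_0$-trivialisation $\nabla^0 = {\rm d}$ while $T$ is computed from the structure functions $\gamma^k_{i,j}$ of \eqref{torsionaffine}-type — here the relevant fact is that $T(Z,X)$, read through $\Phi_{\omega_0}$, is $-[\omega_0(Z),\omega_0(X)]_{\mathfrak g}$ up to the curvature term, hence lands in $[\mathbb{C}A,\mathfrak g] + (\text{stuff})$ — I would verify term by term that each summand on the right is a section of $\mathcal{T}_A\bigl(*\tilde D\bigr)$: the $\nabla^0_X Z$ term because $Z$ has $\omega_0$-component in $\mathbb{C}A$ and $\nabla^0 = {\rm d}$ differentiates that scalar coefficient, staying in $\mathcal{O}_E\bigl(*\tilde D\bigr)A$; and the torsion term because, as $\mathcal{T}_A$ is integrable of rank one and $Z$ spans it, contracting the (reciprocal-connection) torsion against $Z$ and a section already forced to commute with the flow feeds back into $\mathcal{T}_A$. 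Therefore $[Z,X]_{TE} \in \mathcal{T}_A\bigl(*\tilde D\bigr)$, so $[[Z,X]_{TE}] = 0$ in $TE/\mathcal{T}_A\bigl(*\tilde D\bigr)$, which is exactly $Z\lrcorner\nabla^{\mathcal{T}_A}([X]) = 0$.

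The main obstacle, and the step I would be most careful about, is the bookkeeping in this last computation: one must check that $\nabla^{\rm rec}_Z X = 0$ genuinely implies $[Z,X]_{TE} \equiv 0 \bmod \mathcal{T}_A$ rather than merely $\bmod\ (\mathcal{T}_A + \text{image of torsion})$. This rests on the precise relation between $\nabla^0$, $\nabla^{\rm rec}$, and the bracket — namely that the reciprocal connection is built exactly so that its torsion is the \emph{negative} of that of $\nabla^0$, so the bracket $[Z,X]_{TE} = \nabla^{\rm rec}_Z X - \nabla^{\rm rec}_X Z$ holds as well — combined with the fact that $\nabla^{\rm rec}_X Z$, with $Z$ valued in $\mathcal{T}_A\bigl(*\tilde D\bigr)$, stays in $\mathcal{T}_A\bigl(*\tilde D\bigr)$ because $\nabla^{\rm rec}$ differentiates the $\mathbb{C}A$-valued coefficient of $\omega_0(Z)$ and adds a torsion correction that again lies along $A$ by the same structural argument. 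Once this is pinned down, horizontality — hence invariance of $[s]_{\mathcal{V}/\mathcal{V}_A}$ under every $\bigl(\phi,\overline{{\rm d}\phi}\bigr)$ — follows immediately, and the statement holds on all of $U$ by analytic continuation of the identity from $U\setminus\tilde D$.
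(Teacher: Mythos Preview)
Your reduction to showing that $[Z,X]_{TE}\equiv 0 \bmod \mathcal{T}_A$ is correct, and so is the identification $\nabla^0_X Z \in \mathcal{T}_A$. The gap is in the handling of the torsion term. You assert that $T(Z,X)\in\mathcal{T}_A$ (and, in the last paragraph, that $\nabla^{\rm rec}_X Z\in\mathcal{T}_A$), but the justification offered---``the torsion correction again lies along $A$ by the same structural argument''---is not an argument. In the $\omega_0$-trivialisation one has $\omega_0\bigl(T(Z,X)\bigr)={\rm d}\omega_0(Z,X)$, and for $Z=h\tilde A$ this equals $h\,{\rm d}\omega_0(\tilde A,X)$; there is no reason for ${\rm d}\omega_0(\tilde A,X)$ to take values in $\mathbb{C}A$ unless you already know something about $X$ that amounts to the conclusion. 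Your claim that the torsion of $\nabla^{\rm rec}$ is $-T$ is also incorrect as stated: adding $T(\,\cdot\,,\,\cdot\,)$ to $\nabla^0$ does not produce a torsion-free (or opposite-torsion) connection, so the identity $[Z,X]=\nabla^{\rm rec}_Z X-\nabla^{\rm rec}_X Z$ fails and the alternative route in your last paragraph does not close the gap either.

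The paper avoids all of this by not passing through the torsion at all. Since $X$ is an infinitesimal automorphism, $\mathcal{L}_X\omega_0=0$; evaluating on the $\omega_0$-constant field $\tilde A=\omega_0^{-1}(A)$ gives $\omega_0([\tilde A,X])=-\tilde A\bigl(\omega_0(X)\bigr)+X\bigl(\omega_0(\tilde A)\bigr)+\omega_0([\tilde A,X])$, hence $0=X(\omega_0(\tilde A))=\omega_0([X,\tilde A])$ directly, i.e.\ $[\tilde A,X]=0$. For a general $Z=h\tilde A\in\mathcal{T}_A$ one then has $[Z,X]=h[\tilde A,X]-X(h)\tilde A=-X(h)\tilde A\in\mathcal{T}_A$, which is the desired Bott-horizontality. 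This is a one-line computation and makes the decomposition $[Z,X]=-2T(Z,X)-\nabla^0_X Z$ unnecessary. If you want to salvage your route, note that with the correct sign in the reciprocal connection (so that its horizontal sections really are the infinitesimal automorphisms) the two torsion terms in your computation cancel and one is left with $[Z,X]=-\nabla^0_X Z$, which you already know lies in $\mathcal{T}_A$.
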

\begin{proof} Let $X$ be any holomorphic vector field on $U\subset E$, and $[X]$ its class in $TE/\mathcal{T}_A$. By definition, for any $Z_A = h \tilde{A}$ (where $h$ is a meromorphic function on $U$ and $\tilde{A}=\omega_0^{-1}(A)$), we have
\begin{align*}
 0 = \big[\tilde{A},X\big]_{TE} = \frac{1}{h}[Z_A,X]_{TE} \mod \mathcal{T}_A\bigl(* \tilde{D}\bigr)(U).
 \end{align*}

In other words, the classes of ${\rm d}\phi(X)$ and $\phi^* X$ in $TE/\mathcal{T}_A\bigl(* \tilde{D}\bigr)$, well defined on $U\cap \phi(U)$, coincides, i.e., $s$ is invariant by $\big(\phi,\overline{{\rm d}\phi}\big)$. \end{proof}

Now, we suppose $M$ to be simply connected. We wish to prove the extension property for~$(E,\omega_0)$ (see Definition~\ref{extensionproperty}). We will use the following general fact on meromorphic Cartan geometries.

\begin{Theorem}\label{Lie-Cartan} Let $(G,P)$ be a complex Klein geometry, and $(M,D)$ be a pair with $\dim(M)=\dim(G/P)$. Let $(E,\omega_0)$ be a meromorphic $(G,P)$-Cartan geometry on $(M,D)$, $\mathcal{V}=\mathcal{O}_E\otimes \mathfrak{g}$ and~$\mathcal{K}$ the sheaf defined as in \eqref{K}. Let $x_0 \in D$ belonging to the smooth part of an unique irreducible component $D_\alpha$, and $\tilde{D}_\alpha = p^{-1}(D_\alpha)$.

Suppose that there exist $A\in \mathfrak{g}\setminus \mathfrak{p}$ and a $A$-distinguished curve $\tilde{\Sigma}$ for $(E,\omega_0)$ such that $\tilde{\Sigma}\cap \tilde{D}_\alpha= \{e_0\}$ for some point $e_0$ in the fiber of $x_0$.

Denote by $\mathcal{T}_A$ the distinguished foliation of $E$ \eqref{definitionTA}, as well as $\mathcal{V}_A=\Phi_{\omega_0}(\mathcal{T}_A)$ \textup{(}see \eqref{Phi}{\rm )}. Then there exists a neighborhood $U$ of $x_0$ with the following properties:
 \begin{itemize}\itemsep=0pt
 \item[$(1)$] Let $s$ be a section of $\mathcal{K}$ on an open subset $V\subset p^{-1}(U\setminus D)$. Then the class $[s]_{\mathcal{V}/\mathcal{V}_A}$ of $s$ in~$TE/\mathcal{T}_A$ extends as a (single-valued) section of $TE/\mathcal{T}_A$ on $p^{-1}(U\setminus D)$.
 \item[$(2)$] The image under $\Phi_{\omega_0}$ $($see \eqref{Phi}$)$ of the section defined in~$(1)$ is the restriction of a section of $\mathcal{V}/\mathcal{V}_A(* D)$ over $p^{-1}(U)$.
\item[$(3)$] Suppose moreover that $(E,\omega_0)$ is holomorphic branched on $(M,D)$. Then the above section lies in $\mathcal{V}/\mathcal{V}_A(p^{-1}(U))$. \end{itemize}
\end{Theorem}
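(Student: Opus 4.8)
The plan is to find a polydisc neighbourhood $U$ of $x_0$ for which the only monodromy that could keep the class $[s]$ from being single-valued on $p^{-1}(U\setminus D)$ is the one around $\tilde{D}_\alpha$, and then to kill that monodromy using the leaf $\tilde{\Sigma}$, which meets $\tilde{D}_\alpha$ in the single point $e_0$. Two inputs are crucial. First, by Lemma~\ref{killBott} the class $[s]_{\mathcal{V}/\mathcal{V}_A}$ --- equivalently the class in $TE/\mathcal{T}_A$ of the infinitesimal automorphism $\Phi_{\omega_0}^{-1}(s)$ --- is invariant under all the isomorphisms $\bigl(\phi,\overline{{\rm d}\phi}\bigr)$ induced by local flows of vector fields tangent to $\mathcal{T}_A$; that is, it is horizontal for the Bott connection $\nabla^{\mathcal{T}_A}$ on $TE/\mathcal{T}_A$. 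Second, $\nabla^{\mathcal{T}_A}$ is a holomorphic partial connection on the holomorphic bundle $TE/\mathcal{T}_A$ over $E\setminus{\rm Sing}(\mathcal{T}_A)$: it has no poles along $\tilde{D}$, so its restriction to any leaf of $\mathcal{T}_A$ --- a smooth curve, since $e_0\notin{\rm Sing}(\mathcal{T}_A)$ by definition of an $A$-distinguished curve --- is a genuine holomorphic connection on a holomorphic bundle over that disc.

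Take $U$ to be a small polydisc centred at $x_0$ on which $D$ reduces to the smooth hypersurface $D_\alpha$ and on which the holomorphic $P$-bundle $E$ is trivial. Then $p^{-1}(U\setminus D)\simeq (U\setminus D)\times P$, and the monodromy of the local system $\mathcal{K}$ on it is generated by a small loop $\mu$ around $\tilde{D}_\alpha$ together with loops in a fibre $P$. The fibre loops contribute nothing: along a fibre one has $\omega_0=\omega_P$, and since the curvature of a Cartan geometry is horizontal ($k_{\omega_0}(A,B)=0$ whenever $A\in\mathfrak{p}$) the connection $\nabla^{\rm rec}$ restricts, in the $\omega_0$-constant frame, to ${\rm d}-\operatorname{ad}(\omega_P)$, which is gauge-equivalent to ${\rm d}$ via the single-valued map $\operatorname{Ad}\colon P\to{\rm GL}(\mathfrak{g})$; hence $\mathcal{K}$ has trivial monodromy along fibre loops. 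So it remains to treat $\mu$. As $A\notin\mathfrak{p}$, near $e_0$ the leaves of $\mathcal{T}_A$ give a foliated chart $W\cong T\times\Delta$ with leaves $\{t\}\times\Delta$, in which $\tilde{\Sigma}\cap W$ meets $\tilde{D}_\alpha\cap W$ only at $e_0$. On each leaf $\{t\}\times\Delta$ the a priori multivalued section $[s]$ is horizontal for a holomorphic connection on the whole disc, hence single-valued and holomorphic there; transporting these leafwise values from a transversal slice $T\times\{c\}$ disjoint from $\tilde{D}_\alpha$ (after shrinking $W$) shows that $[s]$ is single-valued and holomorphic on all of $W$, in particular on $W\setminus\tilde{D}_\alpha$. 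Thus a small loop in $W$ around a smooth point of $\tilde{D}_\alpha$ fixes $[s]$; since its $p$-image generates $\pi_1(U\setminus D)$, i.e.\ equals $\pm\mu$, the whole monodromy of $\mathcal{K}$ fixes $[s]$, and $[s]$ extends to a single-valued holomorphic section of $TE/\mathcal{T}_A$ over $p^{-1}(U\setminus D)$. This is~$(1)$.

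For~$(2)$, running the foliated-chart argument along $\tilde{\Sigma}$ and the leaves near it shows that $[s]$ extends holomorphically across $\tilde{D}_\alpha$ over a dense open subset of $p^{-1}(U)$, hence over the complement of an analytic subset of codimension $\geq 2$. Applying $\overline{\Phi}_{\omega_0}$, which is induced by the meromorphic one-form $\omega_0$ with poles on $\tilde{D}$, produces a meromorphic section of $\mathcal{V}/\mathcal{V}_A$, with poles along $\tilde{D}$, defined off that codimension-$\geq 2$ locus; by Levi's extension theorem it extends across it, giving (after shrinking $U$ if needed) a section of $\mathcal{V}/\mathcal{V}_A(*D)$ over $p^{-1}(U)$ restricting to $\overline{\Phi}_{\omega_0}([s])$. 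For~$(3)$, if $(E,\omega_0)$ is branched holomorphic then $\omega_0$, hence $\overline{\Phi}_{\omega_0}$, is holomorphic, so the section of~$(2)$ has no poles; moreover $[s]$ is holomorphic off a codimension-$\geq 2$ set, hence extends holomorphically by Riemann's extension theorem, and the resulting section lies in $\mathcal{V}/\mathcal{V}_A\bigl(p^{-1}(U)\bigr)$.

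The main obstacle is the passage from single-valuedness near $e_0$ to single-valuedness on all of $p^{-1}(U\setminus D)$: one must ensure that the loop inside the foliated chart which the holomorphy of $\nabla^{\mathcal{T}_A}$ trivialises really generates $\pi_1(U\setminus D)$ after pushing down --- which is exactly why the argument is organised around a loop encircling a \emph{smooth point} of $\tilde{D}_\alpha$ rather than around the crossing point of $\tilde{\Sigma}$ itself, whose $p$-image could a priori wind several times around $D_\alpha$ --- and that no extra monodromy hides in the fibre direction, which is what the horizontality of the curvature takes care of. The remaining verifications (boundedness for the Riemann/Levi extensions, precise pole orders, and the behaviour near ${\rm Sing}(\mathcal{T}_A)$) are routine.
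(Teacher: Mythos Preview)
Your proof is correct and rests on the same key mechanism as the paper's: the class $[s]$ is horizontal for the Bott connection $\nabla^{\mathcal{T}_A}$ (Lemma~\ref{killBott}), and this partial connection is holomorphic across $\tilde{D}$, so transporting $[s]$ along leaves of $\mathcal{T}_A$ that cross $\tilde{D}_\alpha$ transversally kills the monodromy around $\tilde{D}_\alpha$.

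The packaging differs. The paper does not explicitly decompose $\pi_1\bigl(p^{-1}(U\setminus D)\bigr)$; instead it takes the flow $\phi$ of a nowhere-vanishing holomorphic vector field generating $\mathcal{T}_A$ and uses it to carry a basis of the local system $\mathcal{K}_A=\pi_{\mathcal{V}/\mathcal{V}_A}(\mathcal{K})$ from a simply connected patch $V$ off $\tilde{D}$ to $\phi(V)$, which is a neighbourhood of a point $e'_0\in\tilde{D}_\alpha$. Since $\phi$ is a biholomorphism and $\overline{{\rm d}\phi}$ preserves $\mathcal{K}_A$, this yields a basis of $\mathcal{K}_A$ on $\phi(V)$, hence constancy there; $P$-equivariance of $\mathcal{K}$ then handles the fibre direction without a curvature computation. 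Your curvature argument for fibre loops is correct, but there is a shorter route: the sections of $\mathfrak{kill}_{E,\omega_0}$ are $P$-invariant vector fields, so $\mathcal{K}$ is the pullback $p^{-1}\mathfrak{kill}_{M,\omega_0}$ as a local system and fibre monodromy is automatically trivial. For~(2) and~(3) the paper argues directly that $\bigl(\phi,\overline{{\rm d}\phi}\bigr)$ is an automorphism of meromorphic (respectively, in the branched case, holomorphic) bundles for $\mathcal{V}/\mathcal{V}_A$, so the transported basis lands in $\mathcal{V}/\mathcal{V}_A\bigl(*\tilde{D}\bigr)$ (respectively $\mathcal{V}/\mathcal{V}_A$). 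Your alternative---first extend $[s]\in TE/\mathcal{T}_A$ holomorphically across a Zariski-dense part of $\tilde{D}_\alpha$ via the foliated chart, then apply $\overline{\Phi}_{\omega_0}$, then Levi/Riemann-extend across the remaining codimension-two set---reaches the same conclusion with a mildly different bookkeeping.
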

\begin{proof}
(1) It is clear from the definition that the image $\mathcal{K}_A=\pi_{\mathcal{V}/\mathcal{V}_A}(\mathcal{K})$ of the local system~$\mathcal{K}$ on $E\setminus \tilde{D}$ is a local system on $E\setminus \tilde{D}$. Moreover, $\mathcal{K}$ is $P$-equivariant, and the same remains valid for $\mathcal{K}_A$. We thus have to prove that $\mathcal{K}_A$ is a constant sheaf on an open subset $\tilde{U}\setminus \tilde{D}$ where $\tilde{U}\subset E$ containing some $e_0 \in p^{-1}(x_0)$. It suffices to prove that there exists a simply connected $\tilde{U}$, with $\tilde{U}\cap \tilde{D}$ simply connected, and $e'_0 \in \tilde{U}\cap \tilde{D}$ such that $\mathcal{K}_A$ is a constant sheaf in a neighborhood of~$e'_0$.

The hypothesis says that $\tilde{D}_\alpha$ is not invariant by $\mathcal{T}_A$. By Lemma~\ref{invariantfoliation}, there exists an open neighborhood $\tilde{U}_0$ of some point $e_0 \in p^{-1}(x_0)$ such that a generic leaf of $\mathcal{T}_A\vert_{U_0}$ intersects $\tilde{D}\cap U_0$ in exactly one point. Pick $e'_0$ such that there exists a leaf $\tilde{\Sigma}$ of $\mathcal{T}_A$ with $\tilde{\Sigma}\cap \tilde{D} = \{e'_0\}$. Pick $e\in \tilde{\Sigma}\setminus \tilde{D}$, and a simply connected open neighborhood $V$ of $e$ equipped with a basis $(s_1,\dots,s_r)$ of $\mathcal{K}$.

By Lemma~\ref{killBott}, the family $\overline{{\rm d}\phi}([s_1]_{\mathcal{V}/\mathcal{V}_A}),\dots,\overline{{\rm d}\phi}([s_r]_{\mathcal{V}/\mathcal{V}_A})$, where $\overline{{\rm d}\phi}$ is the morphism \eqref{dphibarBott}, is a~basis of $\phi^*\mathcal{K}_A\big(\phi(V)\setminus \tilde{D}\big)$. Recall that $\phi$ were defined as the flow of a non-vanishing (hence complete) holomorphic vector field $Z$ of $ \mathcal{T}_A$ on $\tilde{U}$, for some time $t\in \mathbb{C}$. In particular, $\tilde{\Sigma}$ is the union of images $\phi(e)$ for various $t$. By the previous remark, there exists such a $t\in \mathbb{C}$ with $\phi(e)=e'_0$. Then $\mathcal{K}_A$ is a constant sheaf when restricted to $\phi(V)\setminus \tilde{D}$, and $\phi(V)$ is a~neighborhood of $e'_0$. By the above remarks, this concludes the proof. 

(2) Since \eqref{dphibarBott} is an isomorphism of meromorphic bundles, we have proved in~(1) that the local system $\mathcal{K}_A=\pi_{\mathcal{V}/\mathcal{V}_A}(\mathcal{K})$ extends as a constant sheaf, included in $\mathcal{V}/\mathcal{V}_A\bigl(* \tilde{D}\bigr)\vert_{p^{-1}(U)}$ since $\big(\phi,\overline{{\rm d}\phi}\big)$ is an automorphism of meromorphic bundles for $\mathcal{V}/\mathcal{V}_A$.

(3) The meromorphic Cartan geometry $(E,\omega_0)$ is holomorphic branched on $(M,D)$ if and only if $\Phi_{\omega_0}(TE)\subset \mathcal{V}$. Suppose this is the case. Since the automorphism of meromorphic bundles $(\phi,[{\rm d}\phi])$ of $TE/\mathcal{T}_A$ defined before \eqref{dphiBott} is an automorphism of holomorphic vector bundles. Since $\Phi_{\omega_0}(TE)$ and $\mathcal{V}$ coincides when restricted to $p^{-1}(U\setminus D)$, we obtain that the image of $\mathcal{V}/\mathcal{V}_A\vert_{p^{-1}(U\setminus D)}$ under the $\overline{{\rm d}\phi}$ lies in $\mathcal{V}/\mathcal{V}_A\vert_{\phi(p^{-1}(U\setminus D)})$, where $\phi(p^{-1}(U\setminus D))$ is a neighborhood of~$e'_0$ by construction. This proves the assertion.
\end{proof}

\subsection{ Regular meromorphic parabolic geometries}\label{sectionparabolique}
In this subsection we recall elementary facts about an important class of Cartan geometries, the \textit{parabolic} ones. The reader can consult \cite{Cap} for further details.

 A \textit{complex parabolic Klein geometry} is a complex Klein geometry $(G,P)$, where $G$ is a complex semi-simple Lie group and $P$ is a parabolic subgroup. A \textit{meromorphic parabolic geometry} is a~meromorphic $(G,P)$-Cartan geometry for some complex parabolic Klein geometry. We refer the reader to \cite{Cap} for a detailed introduction.

 With the subgroup $P$ is associated a grading $(\mathfrak{g}_i)_{i\in \mathbb{Z}}$ of the Lie algebra $\mathfrak{g}={\rm Lie}(G)$, meaning%
  \begin{gather}\label{conditiongraded}[\mathfrak{g}_{i_1},\mathfrak{g}_{i_2}]_\mathfrak{g} \subset \mathfrak{g}_{i_1+i_2}\end{gather}
 for any indices $i_1,i_2\in \mathbb{Z}$. We call it the \textit{parabolic filtration} associated with $P$. It induces a~grading of any representation of $G$, in particular \smash{$\mathbb{W}=\bigl(\bigwedge^2 \mathfrak{g}_-^*\bigr)\otimes \mathfrak{g} $} is graded by \textit{homogeneous degrees} $\mathbb{W}_l$, and we denote by $\pi_l$ the corresponding projections.

 The \textit{parabolic degree} of $(G,P)$ is the smallest positive integer $k\geq 1$ such that $\mathfrak{g}_i =\{0\}$ for any $\vert i\vert>k$. By \eqref{conditiongraded}, the subspace $\mathfrak{p}={\rm Lie}(P)$ and the subspace
 \begin{gather*}
 \mathfrak{g}_- = \bigoplus_{i=1}^{k}  \mathfrak{g}_{-i} \end{gather*}
 are clearly two subalgebras of $\mathfrak{g}$. For any $i\in \{-k,\dots,k\}$, we will denote $\mathfrak{g}^i = \bigoplus_{i'\geq i} \mathfrak{g}_{i'}$, inducing a filtration $(\mathfrak{g}^i)_{i=-k,\dots,k}$ of $\mathfrak{g}$.

 By \cite{Chevalley}, we can and do always pick a basis $\big(\mathfrak{e}^i_j\big)_{\substack{i=-k,\dots,k \\ j=1,\dots,n_i}}$ of $\mathfrak{g}$, such that $\big(\mathfrak{e}_j^i\big)_{j=1,\dots,n_i}$ is a basis for $\mathfrak{g}_i$ for any $i\in \{-k,\dots,k\}$, and $\big[\mathfrak{e}^{i_1}_{j_1},\mathfrak{e}^{i_2}_{j_2}\big]_\mathfrak{g}$ is a vector in $\mathbb{Z}\mathfrak{e}^{i_1+i_2}_{j}$ for some $j\in \{1,\dots,n_{i_1+i_2}\}$. We will refer to it as a \textit{graded basis} of $\mathfrak{g}$ for $(G,P)$.

 The homogeneous space $G/P$ associated with a complex parabolic Klein geometry $(G,P)$ bears the following holomorphic geometric structure. Its tangent bundle is filtered by subbundles $\big(T^{-i}G/P\big)_{i=1,\dots,k}$ where $T^{-i}G/P$ is the projection of $\omega_G^{-1}\big(\mathfrak{g}^i\big)$ through the tangent map $Tp_{G/P}$ of the projection $p_{G/P}\colon G \longrightarrow G/P$. The Lie bracket of holomorphic vector fields on $G/P$ induces a Lie bracket on the vector space of local sections of the corresponding graded bundle $\operatorname{gr}(TG/P)$. The Lie algebra bundle thus obtained is locally isomorphic to $(U\times \mathfrak{g}_-, [\,,\,]_{\mathfrak{g}_-})$.

 The \textit{regular meromorphic parabolic geometries} are the infinitesimal versions of this model. More precisely, these are meromorphic $(G,P)$-Cartan geometries $(E,\omega_0)$ on $(M,D)$ for which the homogeneous component $\pi_l(k_{\omega_0})$ of degree $l$ of the Cartan curvature vanishes identically whenever $l\leq 0$ (see above). This amounts to the following property. Let $T^{-i}M(* D)$ be the image of $\omega_0^{-1}\big(\mathfrak{g}^{-i}\big)\bigl(* \tilde{D}\bigr)$ through $Tp$. This gives a filtration of $TM\bigl(* \tilde{D}\bigr)$, and $(E,\omega_0)$ is regular if and only if the Lie bracket of vector fields on $M$ induces a structure of Lie algebras bundle on the graded $\operatorname{gr}(TM\setminus D)$, locally isomorphic to $(U\times \mathfrak{g}_-,[\,,\,]_{\mathfrak{g}_-})$.

\subsection{Affine and degree one parabolic models}
We now apply Theorem~\ref{Lie-Cartan} to prove the extension property for infinitesimal automorphisms of some holomorphic branched $(G,P)$-Cartan geometries which are spiral (see Definition~\ref{taucondition}). More precisely, we let~$(G,P)$ be either
\begin{itemize}\itemsep=0pt
 \item[$\bullet$] A complex parabolic Klein geometry of dimension $n\geq 2$ and degree $k=1$ (see Section~\ref{sectionparabolique}), with $G$ a complex simply connected simple Lie group. In this case we let $\mathfrak{g}=\mathfrak{g}_{-1}\oplus \mathfrak{g}_0\oplus \mathfrak{g}_1$ be the graded Lie algebra defined in Section~\ref{sectionparabolique}.
\item[$\bullet$] Or the complex affine Klein geometry $(G,P)$ of dimension $n\geq 2$. In this case we let $\mathfrak{g}=\mathfrak{g}_{-1}\oplus \mathfrak{g}_0$ be the decomposition of the Lie algebra of $G$ where $\mathfrak{g}_0={\rm Lie}(P)$ and $\mathfrak{g}_{-1}$ the (abelian) Lie algebra of the infinitesimal generators for the translations in $G$. By convention we let $\mathfrak{g}_{1}=\{0\}$. \end{itemize}

These two kind of models are of special interest because in both cases the Levi subgroup $G_0=\exp _{G}(\mathfrak{g}_0)\subset P$ acts transitively on the lines in $\mathfrak{g}_{-1}$. This is clear for the second one, since the corresponding action is the standard representation of ${\rm GL}_n(\mathbb{C})$. For the first one, remark that $P=G_0 \ltimes \exp _G(\mathfrak{g}_1)$, and $[\mathfrak{g}_1,\mathfrak{g}_{-1}]_\mathfrak{g} \subset \mathfrak{g}_0$ by \eqref{conditiongraded}. Hence, since $\mathfrak{g}$ is simple, $[\mathfrak{g}_0,\mathfrak{g}_{-1}]_\mathfrak{g}$ must contain $\mathfrak{g}_{-1}$ (otherwise $[\mathfrak{g},\mathfrak{g}_{-1}]_{\mathfrak{g}}$ would be a non-trivial ideal in $\mathfrak{g}$). Again by \eqref{conditiongraded} is contained in $\mathfrak{g}_{-1}$. So it is in fact equal to $\mathfrak{g}_{-1}$. This implies that the action of $G_0$ on $\mathfrak{g}_{-1}$ is open.

By the above remarks, for any $A\in \mathfrak{g}_{-1}\setminus \{0\}$, there exist a basis $(\mathfrak{e}_i)_{i=1,\dots,n}$ of the abelian subalgebra $\mathfrak{g}_{-1}\subset \mathfrak{g}={\rm Lie}(G)$ and $n$ elements $b_1,\dots,b_n \in P$ such that{\samepage \begin{equation}\label{pi} \operatorname{Ad}\big(b_i^{-1}\big)[A]\in \mathbb{C} \mathfrak{e}_i \end{equation} for $1\leq i\leq n$.}

\begin{Lemma}\label{lemmageodesicaffine} Let $(E,\omega_0)$ be a holomorphic branched $(G,P)$-Cartan geometry on $(M,D)$, and let $x_0 \in D$ belonging to the smooth part of an unique irreducible component $D_\alpha$. Suppose there exists a $A$-spiral $\overline{\Sigma}$ for $(E,\omega_0)$ with $\overline{\Sigma}\cap D=\{x_0\}$ and $A\neq 0$.

Then there exists an analytic subset $S_\alpha$ of codimension at least one in $D_\alpha$ with the following property. For any $x'_0 \in D_\alpha \setminus S_\alpha$, and for any $1\leq i\leq n$, there exists a holomorphic $\mathfrak{e}_i$-spiral $\overline{\Sigma}_i$ for $(E,\omega_0)$ with $\overline{\Sigma}_i \cap D =\{x'_0\}$.
\end{Lemma}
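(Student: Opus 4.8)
The plan is to produce a single ``good'' open subset of $\tilde D_\alpha$, obtained from Lemma~\ref{invariantfoliation}, and then to transport it by the right $P$-action: because $\omega_0$ is $P$-equivariant, the translations $r_{b_i}$ carry the $A$-distinguished foliation onto the $\mathfrak{e}_i$-distinguished ones while fixing the fibres of $p$, so one good set will serve all $n$ indices simultaneously.

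\textbf{Step 1: from the given spiral to a nonsingular leaf meeting $\tilde D$ once.} Here $A\in\mathfrak g_{-1}\setminus\{0\}$ (the case to which \eqref{pi} applies). Since $A\in\mathfrak g\setminus\mathfrak p$ and $(E,\omega_0)$ is branched holomorphic, Lemma~\ref{branchedfoliation} gives that $\mathcal T_A$ is everywhere transverse to $\ker({\rm d}p)$, hence is a nonsingular rank-one holomorphic foliation, so ${\rm Sing}(\mathcal T_A)=\varnothing$. Arguing as in the proof of Proposition~\ref{branchedtaucondition} (extend the holomorphic principal connection whose kernel is $\mathcal T_A$ along $\overline\Sigma$), the lift of $\overline\Sigma\setminus D$ extends across $D$ to a leaf $\tilde\Sigma$ of $\mathcal T_A$ with $\tilde\Sigma\cap\tilde D=\{e_0\}$ for a point $e_0\in p^{-1}(x_0)$; near $e_0$ one has $\tilde D=\tilde D_\alpha$ because $x_0$ lies in the smooth part of the unique component $D_\alpha$. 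Thus the order of tangency \eqref{tangency} of $\mathcal T_A$ to $\tilde D_\alpha$ at $e_0$ is finite, and Lemma~\ref{invariantfoliation}(1) shows that the irreducible divisor $\tilde D_\alpha$ is \emph{not} invariant by $\mathcal T_A$. Working over the smooth locus of $D_\alpha$ (where $\tilde D_\alpha$ is a submanifold) and recalling ${\rm Sing}(\mathcal T_A)=\varnothing$, Lemma~\ref{invariantfoliation}(2) produces a Zariski-dense open $\tilde W\subset\tilde D_\alpha$ whose complement $Z:=\tilde D_\alpha\setminus\tilde W$ is a proper analytic subset, such that for every $e\in\tilde W$ the leaf $\tilde\Sigma_e$ of $\mathcal T_A$ through $e$ satisfies $\tilde\Sigma_e\cap\tilde D=\{e\}$.

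\textbf{Step 2: the translation trick.} $P$-equivariance of $\omega_0$ gives $r_{b_i}^\star\omega_0=\operatorname{Ad}(b_i^{-1})\circ\omega_0$, whence, since $\mathcal T_{A}$ is defined by $\omega_0(\mathcal T_A)\subset\mathcal O_E(*\tilde D)A$ and $r_{b_i}$ preserves $\tilde D$, one gets $(r_{b_i})_*\mathcal T_A=\mathcal T_{\operatorname{Ad}(b_i^{-1})A}=\mathcal T_{\mathfrak e_i}$ by \eqref{pi} (distinguished foliations depending only on the line). Moreover $p\circ r_{b_i}=p$, so $r_{b_i}(\tilde D_\alpha)=\tilde D_\alpha$ and $r_{b_i}(\tilde D)=\tilde D$. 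Hence, given $x'_0\in D_\alpha$ with $p^{-1}(x'_0)\not\subset Z$, pick $e\in\tilde W\cap p^{-1}(x'_0)$; for each $i$ the point $e_i:=e\cdot b_i$ again lies over $x'_0$, and $r_{b_i}(\tilde\Sigma_e)$ is the leaf of $\mathcal T_{\mathfrak e_i}$ through $e_i$, with $r_{b_i}(\tilde\Sigma_e)\cap\tilde D=r_{b_i}(\tilde\Sigma_e\cap\tilde D)=\{e_i\}$. As $\mathfrak e_i\in\mathfrak g_{-1}\setminus\{0\}$, Lemma~\ref{branchedfoliation} makes $\mathcal T_{\mathfrak e_i}$ transverse to $\ker({\rm d}p)$, so $p$ restricts to a biholomorphism of $r_{b_i}(\tilde\Sigma_e)$ onto a smooth curve $\overline\Sigma_i\ni x'_0$ with $\overline\Sigma_i\cap D=\{x'_0\}$ whose lift projects onto it; that is, $\overline\Sigma_i$ is a holomorphic $\mathfrak e_i$-spiral through $x'_0$ meeting $D$ only at $x'_0$.

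\textbf{Step 3: the excluded set.} It remains to see that $S_\alpha:=\{x\in D_\alpha:\ p^{-1}(x)\subset Z\}$, suitably enlarged, is analytic of codimension $\geq 1$. In a local trivialization $p^{-1}(U)\cap\tilde D_\alpha\simeq U\times P$ with $Z$ cut out by holomorphic $f_1,\dots,f_m$, a fibre $\{x\}\times P$ lies in $Z$ exactly when each $f_j$ vanishes identically along it (here $P$ is connected, hence irreducible), which is the simultaneous vanishing of the ($x$-holomorphic) coefficients of the $f_j$ in power-series charts of $P$; this is an analytic condition, and these local loci glue. The resulting analytic set is proper since otherwise $Z\supset\tilde D_\alpha$; adjoining the codimension-$\geq 1$ analytic sets ${\rm Sing}(D_\alpha)$ and $D_\alpha\cap\bigcup_{\beta\neq\alpha}D_\beta$ (so that $\tilde D=\tilde D_\alpha$ and $\tilde D_\alpha$ is a submanifold near the fibres we use) gives the desired $S_\alpha$, and Steps 1--2 apply to every $x'_0\in D_\alpha\setminus S_\alpha$. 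I expect the main obstacle to be precisely this last point: producing one analytic, codimension-$\geq 1$ bad set valid for all $n$ indices at once — which is exactly what the identity $p\circ r_{b_i}=p$ (making all the translated good sets $r_{b_i}(\tilde W)$ share the image $p(\tilde W)$ in $D_\alpha$) is meant to finesse — together with establishing analyticity of the fibrewise-containment locus although $p$ is not proper.
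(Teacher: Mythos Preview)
Your argument is correct and takes essentially the same route as the paper: lift the spiral via the extension argument of Proposition~\ref{branchedtaucondition}, exploit $P$-equivariance together with the $b_i$ of \eqref{pi} to pass between $\mathcal T_A$ and the $\mathcal T_{\mathfrak e_i}$, and invoke Lemma~\ref{invariantfoliation}(2) for the Zariski-dense good locus. The only difference is the order of operations: the paper first translates the single leaf $\tilde\Sigma$ to see that $\tilde D_\alpha$ is not invariant by each $\mathcal T_{\mathfrak e_i}$, then applies Lemma~\ref{invariantfoliation}(2) once per index to obtain open-dense sets $W_i\subset\tilde D_\alpha$ and puts $S_\alpha=D_\alpha\setminus p\bigl(\bigcap_i W_i\bigr)$, whereas you apply the lemma once to $\mathcal T_A$ and translate the resulting leaves afterwards, using $p\circ r_{b_i}=p$ to make one good set serve all $n$ indices. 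Your Step~3 also supplies an explicit analyticity argument for the fibrewise-containment locus $S_\alpha$, which the paper leaves tacit.
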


\begin{proof} By Proposition~\ref{branchedtaucondition}, $\overline{\Sigma}$ is a $A$-holomorphic geodesic for $A\in \mathfrak{g}\setminus \mathfrak{p}$. Thus, there exist $e_0 \in p^{-1}(x_0)$ and a $A$-distinguished smooth complex curve $\tilde{\Sigma}$ with $p\big(\tilde{\Sigma}\big)=\overline{\Sigma}$ and $\tilde{\Sigma} \cap \tilde{D}_\alpha =\{e_0\}$. Let $b_1,\dots,b_n \in P$ as in \eqref{pi}. By equivariance of $\omega_0$, for any $1\leq i \leq n$, the $b_i$-translated $\tilde{\Sigma}_i$ of~$\tilde{\Sigma}$ is a $\mathfrak{e}_i$-distinguished smooth complex curve, with $\tilde{\Sigma}_i \cap \tilde{D}_\alpha = \{e_0 \cdot b_i\}$. This means that for any $i\in \{1,\dots,n\}$, $\tilde{D}_\alpha$ is not invariant by $\mathcal{T}_{\mathfrak{e}_i}$. By Lemma~\ref{invariantfoliation}, this implies that for any such~$i$, there exists a Zariski open-dense subset $W_i$ of $\tilde{D}_\alpha$ such that, for any $e'_0 \in W_i$, there exists a~leaf~$\tilde{\Sigma}_i$ of~$\mathcal{T}_{\mathfrak{e}_i}$ transverse to $\tilde{D}_\alpha$ at $e'_0$.

Consider the Zariksi-dense open subset in $D_\alpha$ defined by
 \begin{gather*} W_\alpha= p\bigg(\bigcap_{i=1}^{n} W_i\bigg). \end{gather*}
 By definition, for any $e'_0 \in p^{-1}(W_\alpha)$, and any $1\leq i \leq n$ there is a $\mathfrak{e}_i$-distinguished smooth curve~$\tilde{\Sigma}_i$ with $\tilde{\Sigma}_i \cap \tilde{D}=\{e'_0\}$. The corresponding projections $\Sigma_i$ through $p$ are holomorphic $\mathfrak{e}_i$-spirals for $(E,\omega_0)$, and the proof is thus achieved by considering $S_\alpha=D_\alpha\setminus W_\alpha$.
\end{proof}

We use the proof of the Deligne's extension theorem (see \cite[Chapter~II, Theorems~5.2 and~5.4]{Deligne}), to prove a lemma which enables us to manage the codimension one analytic subset $S=\bigcup_{\alpha\in I} S_\alpha$ in~$D$ from Lemma~\ref{lemmageodesicaffine}.

\begin{Lemma}\label{lemmalocalmonodromy} Let $M$ be a complex manifold and $D$ a divisor of $M$. Let $(\mathcal{E},\nabla)$ be a meromorphic connection on $M$ with pole at $D$.
Suppose the existence of an analytic subset $S$ of codimension at least one in $D$ with the following property. For any $x_0\in D\setminus S$, there exists an open neighborhood~$U_{x_0}$ of~$x_0$ in~$M$ such that the restriction of $\ker  \nabla$ to $U_{x_0}\setminus D$ is a constant sheaf.
Then $\ker  \nabla$ is the restriction of a local system on~$M$.
\end{Lemma}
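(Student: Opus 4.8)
The plan is to push the local system $\ker(\nabla)$ forward to the whole of $M$ in two stages, using crucially that the bad locus $S$ has complex codimension at least $2$ in $M$ (it has codimension $\ge 1$ in the divisor $D$, which itself has codimension $1$). Throughout, $\ker(\nabla)$ denotes the local system of horizontal sections on $M\setminus D$, and I would repeatedly use the elementary fact that the complement of a proper closed analytic subset in a connected complex manifold is connected and dense.

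\textbf{Step 1: extension across $D\setminus S$.} Let $j\colon M\setminus D\hookrightarrow M\setminus S$ be the inclusion and set $\mathcal{L}_1:=j_*\ker(\nabla)$. I would show that $\mathcal{L}_1$ is a local system on $M\setminus S$ restricting to $\ker(\nabla)$ over $M\setminus D$. Away from $D$ this is immediate. Near a point $x_0\in D\setminus S$, the hypothesis provides a neighborhood $U_{x_0}$ on which $\ker(\nabla)|_{U_{x_0}\setminus D}$ is a constant sheaf, say with stalk $V$; after shrinking, $U_{x_0}$ is connected and disjoint from the closed set $S$. For any connected open $W\subset U_{x_0}$ the set $W\setminus D$ is connected and nonempty, hence $(j_*\ker(\nabla))(W)=\ker(\nabla)(W\setminus D)=V$; applying this on each component of a general open subset gives $j_*\ker(\nabla)|_{U_{x_0}}=\underline{V}$, which proves local constancy at $x_0$. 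Observe that this argument never requires $D$ to be smooth at $x_0$: the connectedness of $W\setminus D$ does the work.

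\textbf{Step 2: extension across $S$.} Let $\tilde j\colon M\setminus S\hookrightarrow M$ and put $\mathcal{K}:=\tilde j_*\mathcal{L}_1$, so that $\mathcal{K}=i_*\ker(\nabla)$ for $i\colon M\setminus D\hookrightarrow M$. Near a point $z_0\in S$ choose a ball $B\ni z_0$; since $S\cap B$ is a closed analytic subset of $B$ of complex codimension at least $2$ (real codimension at least $4$), the set $B\setminus S$ is connected and simply connected, so the local system $\mathcal{L}_1|_{B\setminus S}$ is constant, say with stalk $V'$. Arguing exactly as in Step~1 (using again that $W\setminus S$ is connected for $W\subset B$ connected), one gets $\mathcal{K}|_B=\underline{V'}$, hence $\mathcal{K}$ is locally constant on all of $M$. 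Since $\mathcal{K}|_{M\setminus D}=\ker(\nabla)$ by construction, $\mathcal{K}$ is the sought-for local system on $M$.

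The argument is entirely topological; the only non-formal ingredient is the statement that removing a closed analytic subset of complex codimension $\ge 2$ from a complex manifold leaves $\pi_1$ unchanged (equivalently, small punctured balls remain simply connected), which is exactly where the codimension hypothesis on $S$ enters and which underlies the proof of Deligne's extension theorem as well. I do not expect a genuine obstacle here: the only thing requiring care is the bookkeeping of connectedness of the various complements $W\setminus D$ and $W\setminus S$, and the verification that the pushforward sheaves are really locally constant and not merely of the right rank at each stalk.
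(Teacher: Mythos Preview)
Your argument is correct and considerably more direct than the paper's. The paper first invokes Hironaka's desingularization to reduce to the case where $D$ has normal crossings, then at a point $x_0\in S$ writes $\pi_1(U\setminus D)$ as generated by commuting loops $\gamma_1,\ldots,\gamma_r$ around the local branches, builds a model connection $\nabla_0={\rm d}+\sum_j \frac{{\rm d}z_{i_j}}{z_{i_j}}\otimes A_j$ with the same monodromy, and uses the explicit form of the horizontal sections of $\nabla_0$ together with the hypothesis at nearby generic points of each branch to force every $A_j$ to be diagonalizable with integer eigenvalues, hence $\exp(A_j)={\rm Id}$. Your proof bypasses all of this: you extend $\ker\nabla$ across $D\setminus S$ by pushforward (using only that the hypothesis is given pointwise on $D\setminus S$ and that complements of hypersurfaces are connected), and then across $S$ by the purely topological fact that a closed analytic set of complex codimension $\ge 2$ does not change $\pi_1$. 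The paper's route has the virtue of making the monodromy computation explicit and of tying the statement to Deligne's extension theorem as announced; your route is shorter, avoids resolution of singularities entirely, and makes transparent that the result is really a statement about local systems rather than about connections. The one place to be careful---and you flagged it---is checking that for every connected open $W$ in the relevant neighborhood the complements $W\setminus D$ and $W\setminus S$ are connected and nonempty; both hold because proper analytic subsets have empty interior and real codimension $\ge 2$.
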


\begin{proof} First remark that, by definition of a local system, the assertion to be proved is equivalent to the fact that for any $x_0\in D$, there exists an open neighborhood $U$ in $M$ such that $\ker  \nabla$ is a~constant sheaf on $U\setminus D$. Since this is satisfied for $x_0 \in D\setminus S$ by the hypothesis, we will prove it for $x_0\in S$. The proof then follows from the arguments used in \cite[Chapter~II, Theorems~5.2 and~5.4]{Deligne}. We write the details to avoid additional definitions.

By the Hironaka's desingularization theorem (see, for example, \cite[Theorem 7.13]{GrauertPeternellRemmert}), there exists a holomorphic map $\pi\colon M'\longrightarrow M$ from a complex manifold $M'$ of the same dimension as~$M$ and a divisor $D'$ of $M'$ with
\begin{itemize}\itemsep=0pt
 \item[(i)] The restriction $\pi|_{M'\setminus D'}\colon M'\setminus D' \longrightarrow M\setminus D$ is a biholomorphism.
 \item[(ii)] $D'$ is a \textit{normal crossing} divisor, that is for any $x_0\in D'$ there exits an open neighborhood~$U$ of $x_0$ in $M'$, coordinates $(z_1,\dots,z_n)$ on $U$ and indices $i_1,\dots,i_r\in \{1,\dots,n\}$ such that
 \begin{gather*}
 D'\cap U= \bigcup_{j=1}^{r} \{z_{i_j}=0\}.\end{gather*}
\end{itemize}

By the equivalence of categories between local systems and flat connections, the pullback $\pi^* \ker  \nabla$ is the local system of horizontal sections for the pullback of meromorphic connection $(\mathcal{E}',\nabla'):=(\pi^* \mathcal{E},\pi^\star \nabla)$ on $M'$, with poles at $D'$. By the property $(i)$ above, $\ker  \nabla$ is the restriction of a local system on $M$ if and only if $\ker  \nabla'$ is the restriction of a local system on $M'$. Hence, we assume from now that $D$ satisfies $(ii)$, i.e., is a normal crossing divisor.

So let $x_0\in S$, and $U$, $(z_1,\dots,z_n)$ as in $(ii)$, and pick $x\in U\setminus D$. Hence, up to restricting $U$, the fundamental group $\pi_1(U\setminus D,x)$ is spanned by generators $\gamma_1,\dots,\gamma_r$, such that for $j=1,\dots,r$, $\gamma_j$ is contained in an open subset which does not intersect $\{z_{i_{j'}}=0\}$ for $j'\neq j$. Moreover, these generators commute. Denote by $B_1,\dots,B_r$ the image of these generators under the monodromy of $\ker \nabla$ at $x$, and let $A_1,\dots,A_r\in \mathfrak{g}\mathfrak{l}_r(\mathbb{C})$ be matrices such that $B_j=\exp(A_j)$ for $j=1,\dots,r$, where we have fixed an isomorphism $\varphi_x$ between $\ker \nabla(x)$ and $\mathbb{C}^r$. Consider the holomorphic connection $\big(\mathcal{O}_{U\setminus D}^{\oplus r},\nabla_0\big)$ on $U\setminus D$, where
 \begin{gather*} \nabla_0 = {\rm d}+ \sum_{j=1}^{r} \frac{{\rm d}z_{i_j}}{z_{i_j}} \otimes A_j,
 \end{gather*}
 where ${\rm d}$ stands for the tensor product of the de Rham derivative and the identity on $\mathcal{O}_{U\setminus D}^{\oplus r}$. Then we can compute explicitly the horizontal sections of $\nabla_0$ (see \cite[p.~93]{Deligne}) to see that the image of each $\gamma_j$ under the monodromy of $\ker  \nabla_0$ and $\ker  \nabla$ at $x$ are conjuguated through the fixed isomorphism $\varphi_x$ as before. By the equivalence of categories between local systems and monodromy \cite[Chapter~I, Theorem~2.17]{Deligne}, we thus have an isomorphism of $\underline{\mathbb{C}}_{U\setminus D}$-sheaves
 \begin{gather}\label{isokernabla0} \varphi\colon\ \ker  \nabla_0 \overset{\sim}{\longrightarrow} \ker  \nabla. \end{gather}
 Moreover, since the $B_j$ and thus the $A_j$ commute, up to a linear change in $\mathbb{C}^r$, the $A_j$ have Jordan form.

Suppose there exists $j\in \{1,\dots,r\}$ such that $A_j$ have a non-integer eigenvalue $\alpha$. Then by the explicit form of the horizontal sections of $\nabla_0$ \cite[p.~93]{Deligne}, there is a section $s_0$ of $\ker  \nabla_0$ on an open dense subset of $U\setminus D$ of the form
\[s_0(z_1,\dots,z_n)=z_{i_j}^\alpha s'_0(z_1,\dots,z_n)\]
with $s'_0$ a $\mathbb{C}^r$-valued holomorphic function defined on $U$ and $\log$ the principal determination of the logarithm on $z_{i_j}(U)$. In particular, for any $x_0\in D\cap U$ with $z_{i_j}(x_0)=0$, and any open neighborhood $V\subset U$ of $x_0$, $\ker  \nabla_0|_{V\setminus D}$ is not a constant sheaf. In view of \eqref{isokernabla0}, this contradicts the hypothesis on $\nabla$. Hence all the eigenvalues of the $A_j$ are integers.

Suppose that there exists $j\in \{1,\dots,r\}$ with $A_j$ having a Jordan bloc of size at least two. Then, again by the explicit form for the horizontal sections of $\nabla_0$, there is a section $s_0$ of $\ker  \nabla_0$ of the form
\[s_0(z_1,\dots,z_n)=z_{i_j}^k (s'_1(z_1,\dots,z_n)+\log(z_{i_j})s'_2(z_1,\dots,z_n)),\]
 where $k$ is an integer, $s'_1$, $s'_2$ are two $\mathbb{C}^r$-valued holomorphic functions on $U$ and $\log$ is as before. Again, $\ker  \nabla_0$ is not a constant sheaf in a neighborhood of any point $x_0\in \{z_{i_j}=0\}$, contradicting the hypothesis on $\nabla$ in view of \eqref{isokernabla0}.

As a consequence, all the $A_j$ are diagonalizable endomorphisms with integer eigenvalues, so that $B_j=\exp(A_j)$ is the identity for any $j\in \{1,\dots,r\}$. Hence $\ker  \nabla_0$ is a constant sheaf on~$U\setminus D$, and in view of \eqref{isokernabla0} the same holds for $\ker  \nabla$, concluding the proof.
\end{proof}

\begin{Corollary}\label{proprextaffine} Let $(G,P)$ be as above, and $(E,\omega_0)$ be a holomorphic branched $(G,P)$-Cartan geometry on $(M,D)$, with $M$ simply connected. Suppose it is spiral $($Definition~{\rm \ref{taucondition})}. Then
\begin{itemize}\itemsep=0pt
\item[$(1)$] $(E,\omega_0)$ satisfies the extension property for the infinitesimal automorphisms.
\item[$(2)$] Any section $s$ of $\ker\big(\nabla^\kappa_{\omega_0}\big)(U)$ $($where $\nabla^\kappa_{\omega_0}$ is the Killing connection, see Definition~{\rm \ref{connkill}}, and $U\subset E$ is an open subset of $M)$ is a section of $\mathcal{V}(U)$.
\end{itemize}\end{Corollary}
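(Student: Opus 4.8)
The plan is to reduce both statements to Theorem~\ref{Lie-Cartan}, applied simultaneously in $n$ directions forming a basis of $\mathfrak{g}_{-1}\cong\mathfrak{g}/\mathfrak{p}$, and then to globalise with Lemma~\ref{lemmalocalmonodromy}. Fix an irreducible component $D_\alpha$ of $D$. Since $(E,\omega_0)$ is spiral, Definition~\ref{taucondition} provides a spiral $\overline{\Sigma}$ with $\overline{\Sigma}\cap D=\{x_0\}$ for some $x_0\in D_\alpha$; moving $x_0$ slightly we may assume it lies in the smooth part of $D_\alpha$ and on no other component, and by Proposition~\ref{branchedtaucondition} (and its proof, using that $(E,\omega_0)$ is holomorphic branched) $\overline{\Sigma}$ is a holomorphic $A$-spiral for some $A\in\mathfrak{g}\setminus\mathfrak{p}$. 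Lemma~\ref{lemmageodesicaffine} then yields an analytic subset $S_\alpha\subset D_\alpha$ of codimension $\geq 1$ such that, for every $x'_0\in D_\alpha\setminus S_\alpha$ and every $1\leq i\leq n$, there is a holomorphic $\mathfrak{e}_i$-spiral through $x'_0$ meeting $D$ only at $x'_0$, where $(\mathfrak{e}_i)_{i=1,\dots,n}$ is a fixed basis of $\mathfrak{g}_{-1}$; after enlarging $S_\alpha$ we may also assume every such $x'_0$ is a smooth point of the unique component $D_\alpha$.

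Fix $x'_0\in D_\alpha\setminus S_\alpha$. For each $i$ the holomorphic $\mathfrak{e}_i$-spiral, shrunk to a disc on which $p$ restricts to a biholomorphism, lifts to an $\mathfrak{e}_i$-distinguished curve $\tilde{\Sigma}_i$ meeting $\tilde{D}_\alpha=p^{-1}(D_\alpha)$ in a single point of the fibre of $x'_0$. Since $\mathfrak{e}_i\in\mathfrak{g}_{-1}\subset\mathfrak{g}\setminus\mathfrak{p}$, Theorem~\ref{Lie-Cartan} applies with $A=\mathfrak{e}_i$ and produces a neighbourhood $U_i$ of $x'_0$ such that, for any section $s$ of $\mathcal{K}$ over an open subset of $p^{-1}(U_i\setminus D)$, the class $[s]_{\mathcal{V}/\mathcal{V}_{\mathfrak{e}_i}}$ extends single-valuedly over $p^{-1}(U_i\setminus D)$, is the restriction of a section of $\mathcal{V}/\mathcal{V}_{\mathfrak{e}_i}(*D)$ over $p^{-1}(U_i)$ by part~(2), and lies in $\mathcal{V}/\mathcal{V}_{\mathfrak{e}_i}(p^{-1}(U_i))$ by part~(3), using that $(E,\omega_0)$ is holomorphic branched. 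Put $U'=\bigcap_{i=1}^n U_i$. The crucial elementary point is that, since $n\geq 2$, the lines $\mathbb{C}\mathfrak{e}_1,\dots,\mathbb{C}\mathfrak{e}_n$ have trivial common intersection in $\mathfrak{g}$, so $\mathfrak{g}\hookrightarrow\bigoplus_{i=1}^n\mathfrak{g}/\mathbb{C}\mathfrak{e}_i$ is a split injection of vector spaces; tensoring with $\mathcal{O}_E(*\tilde{D})$, the corresponding morphism $\mathcal{V}(*\tilde{D})\to\bigoplus_{i=1}^n\mathcal{V}/\mathcal{V}_{\mathfrak{e}_i}(*\tilde{D})$ is a split injection of $\mathcal{O}_E(*\tilde{D})$-modules restricting to a split injection of $\mathcal{O}_E$-modules. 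Therefore, over $p^{-1}(U')$, the monodromy of the local system $\mathcal{K}$ about the components of $\tilde{D}$ fixes each $\pi_{\mathcal{V}/\mathcal{V}_{\mathfrak{e}_i}}(\mathcal{K})$, hence fixes $\mathcal{K}$, so $\mathcal{K}$ is a constant sheaf on $p^{-1}(U'\setminus D)$; and any section of $\mathcal{K}$ there, all of whose images under the $\pi_{\mathcal{V}/\mathcal{V}_{\mathfrak{e}_i}}$ are holomorphic over $p^{-1}(U')$, is itself the restriction of a holomorphic section of $\mathcal{V}$ over $p^{-1}(U')$.

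By $P$-equivariance of all the data this shows that $\mathfrak{k}\mathfrak{i}\mathfrak{l}\mathfrak{l}_{M,\omega_0}$ is a constant sheaf on $U_{x'_0}\setminus D$ for a suitable neighbourhood $U_{x'_0}$ of each $x'_0\in D\setminus S$, where $S=\bigcup_\alpha S_\alpha$ has codimension $\geq 1$ in $D$. Since $\nabla^{\rm rec}$ is $P$-invariant (Proposition~\ref{connkillprop}(2)), the Killing connection descends to a meromorphic connection on $M$ with pole at $D$ whose horizontal sheaf is $\mathfrak{k}\mathfrak{i}\mathfrak{l}\mathfrak{l}_{M,\omega_0}$, so Lemma~\ref{lemmalocalmonodromy} applies and $\mathfrak{k}\mathfrak{i}\mathfrak{l}\mathfrak{l}_{M,\omega_0}$ is the restriction of a local system $\mathfrak{k}$ on $M$; as $M$ is simply connected, $\mathfrak{k}$ is constant, which is assertion~(1). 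For assertion~(2), $\mathfrak{k}$ lifts to a $P$-equivariant local system $\mathfrak{k}\mathfrak{i}\mathfrak{l}\mathfrak{l}_{E,\omega_0}$ on $E$ extending $\mathcal{K}$, which is a subsheaf of $\mathcal{V}$ because $\Phi_{\omega_0}(TE)\subset\mathcal{V}$ in the branched case; given $s\in\ker(\nabla^\kappa_{\omega_0})(U)$, a priori only a section of $\mathcal{V}(*\tilde{D})$ over $U$, its restriction to $U\setminus\tilde{D}$ is a section of this subsheaf of $\mathcal{V}$, which near any point of $\tilde{D}$ is constant, so $s$ extends holomorphically across $\tilde{D}$, i.e., $s\in\mathcal{V}(U)$.

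The main obstacle lies in the second paragraph: one must check that the finitely many neighbourhoods produced by Theorem~\ref{Lie-Cartan} can be intersected to a single $U'$, and — more substantially — that simultaneous single-valued holomorphic extension of the $n$ quotient classes $[s]_{\mathcal{V}/\mathcal{V}_{\mathfrak{e}_i}}$ really forces single-valued holomorphic extension of $s$ itself; this is where the split injectivity of $\mathfrak{g}\to\bigoplus_i\mathfrak{g}/\mathbb{C}\mathfrak{e}_i$ and the inclusion $\mathcal{K}\subset\mathcal{V}(*\tilde{D})$ intervene, and some care is needed with the torsion supported on $\tilde{D}$ inside $\mathcal{V}/\mathcal{V}_{\mathfrak{e}_i}$. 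The special structure of the affine and degree-one parabolic models enters only through the transitivity of $G_0$ on the lines of $\mathfrak{g}_{-1}$, which is precisely what makes Lemma~\ref{lemmageodesicaffine} available; the remaining steps are the reduction to generic points of each component via that lemma and the globalisation to all of $M$ via the Deligne-type Lemma~\ref{lemmalocalmonodromy}.
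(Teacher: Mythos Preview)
Your proof is correct and follows essentially the same route as the paper: apply Lemma~\ref{lemmageodesicaffine} to get spirals in every direction $\mathfrak{e}_i$ through generic points of each $D_\alpha$, invoke Theorem~\ref{Lie-Cartan} in each direction, use injectivity of $\mathcal{V}\to\bigoplus_i\mathcal{V}/\mathcal{V}_{\mathfrak{e}_i}$ to deduce that $\mathcal{K}$ itself is constant near such points, and globalise with Lemma~\ref{lemmalocalmonodromy}.

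Two minor remarks. First, the paper observes that \emph{two} independent directions already suffice: the map $\pi_{\mathcal{V}/\mathcal{V}_{\mathfrak{e}_1}}\oplus\pi_{\mathcal{V}/\mathcal{V}_{\mathfrak{e}_2}}$ is injective because $\mathbb{C}\mathfrak{e}_1\cap\mathbb{C}\mathfrak{e}_2=\{0\}$, so you need not carry all $n$ projections (your argument with $n$ is of course equally valid). Second, your paragraph-three justification of~(2) via ``$\Phi_{\omega_0}(TE)\subset\mathcal{V}$'' is not quite the right reason: sections of the extended local system are not constant $\mathfrak{g}$-valued functions, and knowing that $\mathfrak{k}\subset TM$ does not by itself control the pole of the lift to $E$. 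The correct argument is the one you already gave in paragraph two --- Theorem~\ref{Lie-Cartan}(3) forces each $\pi_{\mathcal{V}/\mathcal{V}_{\mathfrak{e}_i}}(s)$ to be holomorphic over $p^{-1}(U')$, and injectivity of $\mathcal{V}\hookrightarrow\mathcal{V}/\mathcal{V}_{\mathfrak{e}_1}\oplus\mathcal{V}/\mathcal{V}_{\mathfrak{e}_2}$ at the level of $\mathcal{O}_E$-modules then forces $s$ itself to be holomorphic there; holomorphicity across the codimension-$\geq 2$ set $p^{-1}(S)$ follows by Hartogs once~(1) gives a global single-valued section.
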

\begin{proof} (1) Let $(D_\alpha)_{\alpha\in I}$ stand for the irreducible components of the divisor $D$. By definition of a spiral meromorphic Cartan geometry and Lemma~\ref{lemmageodesicaffine}, for any $\alpha\in I$, there exists an analytic subset $S_\alpha$ of codimension at least one in $D_\alpha$ such that for any $x_0\in D_\alpha\setminus S_\alpha$, and any $i=1,\dots,n$, there exists a $\mathfrak{e}_i$-spiral $\Sigma_i$ with $\Sigma_i\cap D_\alpha=\{x_0\}$ where $(\mathfrak{e}_i)_{i=1,\dots,n}$ is the basis of $\mathfrak{g}_{-1}$ defined before. Moreover, by Proposition~\ref{branchedtaucondition}, $\Sigma_i$ is a holomorphic spiral in the sense that it lifts to $E$ as a~distinguished curve (intersecting $\tilde{D}_\alpha$ at some $e_{0,i}\in p^{-1}(x_0)$).

The Lemma~\ref{lemmalocalmonodromy} implies that it is sufficient to prove that for any $x_0$ as above, there exists an open neighborhood $U$ of $x_0$ in $M$ such that the restriction $\ker\big(\nabla^\kappa_{\omega_0}\big)|_{U'\setminus D_\alpha}$ is a constant sheaf.

Thus, pick $\alpha \in I$, $x_0 \in D_\alpha\setminus S_\alpha$ and let us prove the existence of an open neighborhood~$U$ of~$x_0$ in $M$ such that $\ker\big(\nabla^\kappa_{\omega_0}\big)|_{U\setminus D}$ is a constant sheaf. By the first remark, for any $1\leq i\leq n$, there is a holomorphic $\mathfrak{e}_i$-spiral $\Sigma_i$ with $\Sigma_i \cap D=\{x_0\}$. More precisely, the proof of the lemma implies the existence, for $i\in \{1,\dots,n\}$ fixed, of $e_0\in p^{-1}(x_0)$ such that the $\mathfrak{e}_i$-distinguished curve $\tilde{\Sigma}_i$ projecting onto $\Sigma_i$ satisfies $\tilde{\Sigma}_i \cap \tilde{D}=\{e_0\}$. Using the~Theorem~\ref{Lie-Cartan} for each geodesic, we obtain neighborhoods $U_i$ of $x_0$ such that the restriction of the local system $\pi_{\mathcal{V}/\mathcal{V}_{\mathfrak{e}_i}}\bigl(\ker\bigl(\nabla^\kappa_{\omega_0}\bigr)\bigr)$ to $p^{-1}(U_i)$ is a constant sheaf.

Let $U=\bigcap_{i=1}^{n} U_i$. Since $\mathfrak{e}_1$, $\mathfrak{e}_2$ are independent vectors of $\mathfrak{g}$, the morphism of $\mathcal{O}_E$-modules
\begin{align}\label{piV1V2}
 \pi_{\mathcal{V}/\mathcal{V}_{\mathfrak{e}_1}} \oplus \pi_{\mathcal{V}/\mathcal{V}_{\mathfrak{e}_2}}\colon \ \mathcal{V}\bigl(* \tilde{D}\bigr) \longrightarrow \mathcal{V}/\mathcal{V}_{\mathfrak{e}_1}\bigl(* \tilde{D}\bigr) \oplus \mathcal{V}/\mathcal{V}_{\mathfrak{e}_2}\bigl(* \tilde{D}\bigr) \end{align}
 is an isomorphism onto its image. Thus, it restricts to $\ker\big(\nabla^\kappa_{\omega_0}\big)$ as an isomorphism of $\mathbb{C}$-sheaves onto its image, a subsheaf of the local system $\pi_{\mathcal{V}/\mathcal{V}_{\mathfrak{e}_1}}\!\big( \ker\big(\nabla^\kappa_{\omega_0}\big)\big) \oplus \pi_{\mathcal{V}/\mathcal{V}_{\mathfrak{e}_2}}\!\big(\ker\big(\nabla^\kappa_{\omega_0}\big)\big)$. By the above remark, this local system is a constant sheaf when restricted to $p^{-1}(U)$. Thus, the same is true for $\ker\big(\nabla^\kappa_{\omega_0}\big)$, i.e., $(E,\omega_0)$ satisfies the extension property for the infinitesimal automorphisms.

(2) Since $(E,\omega_0)$ is a branched holomorphic Cartan geometry, we can apply the point~(3) of~Theorem~\ref{Lie-Cartan} to $A=\mathfrak{e}_1$ and $A=\mathfrak{e}_2$. We obtain that the image of $\ker\big(\nabla^{\kappa}_{\omega_0}\big)$ through $\pi_{\mathcal{V}/\mathcal{V}_{\mathfrak{e}_1}}$ and $\pi_{\mathcal{V}/\mathcal{V}_{\mathfrak{e}_2}}$ respectively extends as subsheaves of $\mathcal{V}/\mathcal{V}_{\mathfrak{e}_1}$ and $\mathcal{V}/\mathcal{V}_{\mathfrak{e}_2}$ on $E$. Since the morphism~\eqref{piV1V2} clearly restricts to a morphism between $\mathcal{V}$ and $\mathcal{V}/\mathcal{V}_{\mathfrak{e}_1} \oplus \mathcal{V}/\mathcal{V}_{\mathfrak{e}_2}$, this proves the assertion.\looseness=-1
\end{proof}

\begin{Remark}\label{remarkmeromorphicproprext} The conclusion of point (1) in Corollary~\ref{proprextaffine} remains valid if we consider a meromorphic $(G,P)$-Cartan geometry $(E,\omega_0)$ on $(M,D)$, but replacing the \textit{spiral} by \textit{strongly spiral} (see Definition~\ref{strongtaucondition}). The conclusion of point (2) remains true if $\mathcal{V}$ is replaced by $\mathcal{V}(* D)$.
\end{Remark}

\subsection{Parabolic geometries of higher degree}
Now, we let $(G,P)$ be a complex parabolic Klein geometry of degree $k>1$, and denote by $\mathfrak{g}_{-k}\oplus \dots \oplus \mathfrak{g}_0 \oplus \dots\oplus \mathfrak{g}_k$ the parabolic filtration. We refer the reader to \cite{Cap} for the definitions and a~complete introduction on this subject.

 The group $P$ no longer acts transitively on $\mathbb{P}(\mathfrak{g}/\mathfrak{p})$. Instead, we use a result of \cite[Theorem, p.~13]{Cap4} which implies the following lemma.

\begin{Lemma}\label{caractersiticparabolic} Let $(E,\omega_0)$ be a regular meromorphic $(G,P)$-Cartan geometry on a pair $(M,D)$. Then there exists a morphism of $\mathbb{C}$-sheaves
\begin{gather*}
 \mathcal{L} \colon\ \mathcal{V}_{\mathfrak{g}_{-k}}\bigl(* \tilde{D}\bigr) \longrightarrow \mathcal{V}\bigl(* \tilde{D}\bigr) \end{gather*}
 with the following properties:
\begin{itemize}\itemsep=0pt
 \item[$(i)$] Let $\pi_{-k}\colon \mathcal{V}\bigl(* \tilde{D}\bigr) \longrightarrow \mathcal{V}_{\mathfrak{g}_{-k}}\bigl(* \tilde{D}\bigr)$ be the projection on $\mathcal{V}_{\mathfrak{g}_{-k}}$ with respect to $\mathcal{V}_{\mathfrak{g}^{-k+1}}\bigl(* \tilde{D}\bigr)$. Then $\pi_{-k}\circ \mathcal{L} = {\rm Id}_{\mathcal{V}_{\mathfrak{g}_{-k}}}$.
\item[$(ii)$] The restriction of $\mathcal{L}\circ \pi_{-k}$ to $\ker\big(\nabla^\kappa_{\omega_0}\big)$ is the identity on $\ker\big(\nabla^\kappa_{\omega_0}\big)$.
\end{itemize}\end{Lemma}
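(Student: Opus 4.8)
The plan is to obtain the operator $\mathcal{L}$ from the corresponding statement for non-singular holomorphic regular parabolic geometries, which over $E\setminus\tilde D$ is \cite[Theorem, p.~13]{Cap4}, and then to extend it meromorphically across $\tilde D$.

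First I would describe infinitesimal automorphisms in the $\omega_0$-frame. By Proposition~\ref{connkillprop} and Definition~\ref{connkill}, a section of $\ker(\nabla^\kappa_{\omega_0})$ over an open subset $U\subset E\setminus\tilde D$ is a holomorphic function $s\colon U\to\mathfrak g$ with $\nabla^\kappa_{\omega_0}(s)=0$; unwinding the definition of the reciprocal connection and of the curvature function (Definition~\ref{curvature}), this amounts to a first-order linear system of the form ${\rm d}s=\Gamma(s)$ whose coefficient $\operatorname{End}(\mathfrak g)$-valued $1$-form $\Gamma$ is built only from $\omega_0$, $\omega_0^{-1}$, ${\rm d}\omega_0$ and the bracket of $\mathfrak g$. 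Writing $s=\sum_{j=-k}^{k}s_j$ along the parabolic grading $\mathfrak g=\bigoplus_j\mathfrak g_j$, the regularity hypothesis — vanishing of the homogeneous components $\pi_l(k_{\omega_0})$ for $l\le 0$ — makes this system triangular with respect to the grading. On $E\setminus\tilde D$ the restricted geometry is a holomorphic regular $(G,P)$-parabolic geometry, so \cite[Theorem, p.~13]{Cap4} applies and yields a holomorphic linear differential operator $\mathcal L_0\colon\mathcal V_{\mathfrak g_{-k}}\to\mathcal V$ on $E\setminus\tilde D$ which splits $\pi_{-k}$ (i.e.\ $\pi_{-k}\circ\mathcal L_0={\rm Id}$) and recovers every solution of the system from its $\pi_{-k}$-component (i.e.\ $\mathcal L_0\circ\pi_{-k}={\rm Id}$ on solutions): concretely, the triangular system lets one read off $s_{-k+1},\dots,s_k$ successively from $s_{-k}$ and finitely many of its $\omega_0$-derivatives, and $\mathcal L_0$ collects these formulas.

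Next I would extend $\mathcal L_0$ meromorphically. On a trivialising open set $\mathcal L_0$ is a polynomial expression in the $\omega_0$-constant vector fields $\tilde A=\omega_0^{-1}(A)$, the de Rham differential, the bracket $[\,,\,]_\mathfrak g$ and the structure functions of $(E,\omega_0)$ (Definition~\ref{constantes}). By Definition~\ref{Cartan}(iii) the form $\omega_0$ is meromorphic with poles on $\tilde D$ and invertible on $E\setminus\tilde D$, so each $\tilde A$ is meromorphic on $E$ with poles on $\tilde D$; the structure functions are components of $k_{\omega_0}$, again meromorphic with poles on $\tilde D$. Hence the coefficients of $\mathcal L_0$ lie in $\mathcal O_E(*\tilde D)$ and $\mathcal L_0$ extends to a morphism of $\mathbb C$-sheaves $\mathcal L\colon\mathcal V_{\mathfrak g_{-k}}(*\tilde D)\to\mathcal V(*\tilde D)$. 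Property $(i)$ is an identity between two morphisms of $\mathcal O_E(*\tilde D)$-modules given by explicit formulas with coefficients in $\mathcal O_E(*\tilde D)$, so it follows from its validity on the dense open $E\setminus\tilde D$; property $(ii)$ concerns $\ker(\nabla^\kappa_{\omega_0})$, a sheaf carried by $E\setminus\tilde D$, where it is precisely the content of \cite[Theorem, p.~13]{Cap4} recalled above.

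I expect the main obstacle to be the faithfulness of this translation. The reference \cite{Cap4} is written in the smooth category, so one has to verify that the construction of its splitting operator uses nothing beyond the Cartan connection, its exterior derivative, the Lie-algebra data of $\mathfrak g$ and the grading associated with $P$ — i.e.\ that it is holomorphic and polynomial in the geometry — so that it applies verbatim to $(E,\omega_0)$ restricted to $E\setminus\tilde D$ and its coefficients are manifestly of the form required for the meromorphic extension; once this is granted, the absence of spurious poles away from $\tilde D$ is immediate, since $\tilde A$ and the structure functions are holomorphic and $\omega_0$ is non-degenerate on $E\setminus\tilde D$.
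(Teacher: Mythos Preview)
Your approach is essentially the same as the paper's: invoke \cite[Theorem, p.~13]{Cap4} on $E\setminus\tilde D$ and then observe that the splitting operator is built from ingredients (de Rham differential, $\omega_0$-constant vector fields, structure functions, $\mathfrak g$-linear algebra) that extend as meromorphic objects with poles on $\tilde D$. The paper's proof is in fact terser than yours---it simply asserts that the construction in \cite{Cap4} ``uses only differential operators constructed with the de Rham differential on trivial modules, and morphisms of modules obtained by tensorizing linear maps of complex vector spaces with the identity on holomorphic functions'' and hence preserves meromorphic sections---while you spell out the ingredients more explicitly and flag the passage from the smooth to the holomorphic setting as something to check; but the argument is the same.
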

\begin{proof} The theorem on p.~13 in \cite{Cap4} is exactly the non-singular version of this lemma, i.e., when $D$ is empty. Its proof uses only differential operators constructed with the de Rham differential on trivial modules, and morphisms of modules obtained by tensorizing linear map of complex vector spaces with the identity on holomorphic functions. Thus, it straightforwardly extends to the meromorphic category since such operators preserves the sheaves of meromorphic sections. \end{proof}

\begin{Corollary}\label{proprextparab} Let $(E,\omega_0)$ be a regular holomorphic branched $(G,P)$-Cartan geometry on a~pair $(M,D)$. Suppose that for any irreducible component $D_\alpha$ of $D$, there exist $A\in \mathfrak{g}_-\setminus \mathfrak{g}_{-k}$ and a~$A$-spiral $\Sigma$ of $(E,\omega_0)$ with $\Sigma\cap D_\alpha =\{x_0\}$. Then $(E,\omega_0)$ satisfies the extension property for the infinitesimal automorphisms.
\end{Corollary}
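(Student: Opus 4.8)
The plan is to adapt the proof of Corollary~\ref{proprextaffine}, with Lemma~\ref{caractersiticparabolic} playing the role that the transitivity of $G_0$ on the lines of $\mathfrak{g}_{-1}$ played in the degree-one case. As there, the first step is to reduce the statement, via Lemma~\ref{lemmalocalmonodromy} applied on $(E,\tilde{D})$ to the Killing connection $\nabla^\kappa_{\omega_0}$ (whose kernel is the sheaf $\mathcal{K}$ of \eqref{K}, which is $P$-equivariant, so that its extension on $E$ descends to the extension property on $M$), to the following local claim: for every irreducible component $D_\alpha$ of $D$ there is an analytic subset $S_\alpha\subset D_\alpha$ of codimension $\ge 1$ such that every $x_0\in D_\alpha\setminus S_\alpha$ has a neighbourhood $U$ with $\mathcal{K}|_{p^{-1}(U\setminus D)}$ a constant sheaf.

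To produce $S_\alpha$, fix $D_\alpha$ and take $A\in\mathfrak{g}_-\setminus\mathfrak{g}_{-k}$ and a spiral $\Sigma$ meeting $D$ only at one point $x_0$, as in the hypothesis. Since $\mathfrak{g}_-\cap\mathfrak{p}=\{0\}$ we have $A\notin\mathfrak{p}$, so by the argument of Proposition~\ref{branchedtaucondition} (which uses only that $(E,\omega_0)$ is branched holomorphic, via Lemma~\ref{branchedfoliation}) the spiral $\Sigma$ lifts to an $A$-distinguished curve $\tilde{\Sigma}\subset E$ with $\tilde{\Sigma}\cap\tilde{D}_\alpha=\{e_0\}$ for a point $e_0$ over $x_0$; in particular $\tilde{D}_\alpha$ is not invariant by $\mathcal{T}_A$. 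Next I would replace $A$ by a direction $A'$ with $\pi_{-k}(A')=0$, i.e.\ $A'\in(\mathfrak{g}^{-k+1}\cap\mathfrak{g}_-)\setminus\{0\}$: since $A\notin\mathfrak{g}_{-k}$, its component of lowest homogeneity lies in some $\mathfrak{g}_{-j_0}$ with $j_0\le k-1$, hence in $\mathfrak{g}^{-k+1}\cap\mathfrak{g}_-$, and translating $\tilde{\Sigma}$ by the grading one-parameter subgroup of $G_0\subset P$ (which preserves the $P$-invariant $\tilde{D}_\alpha$) rescales the distinguished direction and degenerates it onto $\mathfrak{g}_{-j_0}$ while keeping a single intersection point with $\tilde{D}_\alpha$. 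Once such an $A'$ is fixed, $\tilde{D}_\alpha$ is again not invariant by $\mathcal{T}_{A'}$, so Lemma~\ref{invariantfoliation}(2) gives, exactly as in Lemma~\ref{lemmageodesicaffine}, an analytic $S_\alpha\subset D_\alpha$ of codimension $\ge 1$ (also absorbing the singular locus of $D$ and the crossings $D_\alpha\cap D_\beta$) such that each $x_0\in D_\alpha\setminus S_\alpha$ lies under a point $e'_0$ carrying an $A'$-distinguished curve $\tilde{\Sigma}'$ with $\tilde{\Sigma}'\cap\tilde{D}_\alpha=\{e'_0\}$.

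Now fix such an $x_0$ and apply Theorem~\ref{Lie-Cartan} with $A'$ and $\tilde{\Sigma}'$: there is a neighbourhood $U$ of $x_0$ so that for every section $s$ of $\mathcal{K}$ over an open subset of $p^{-1}(U\setminus D)$ the class $[s]_{\mathcal{V}/\mathcal{V}_{A'}}$ extends single-valued over $p^{-1}(U\setminus D)$, i.e.\ $\pi_{\mathcal{V}/\mathcal{V}_{A'}}(\mathcal{K})$ is a constant sheaf there. Since $\mathcal{V}_{A'}\subset\mathcal{V}_{\mathfrak{g}^{-k+1}}=\ker\pi_{-k}$, the projection $\pi_{-k}$ of Lemma~\ref{caractersiticparabolic} factors as $\pi_{-k}=\overline{\pi}_{-k}\circ\pi_{\mathcal{V}/\mathcal{V}_{A'}}$ with $\overline{\pi}_{-k}$ $\mathcal{O}_E$-linear, so $\pi_{-k}(\mathcal{K})$ is a constant sheaf on $p^{-1}(U\setminus D)$ as well; and by Lemma~\ref{caractersiticparabolic}(ii), $\mathcal{L}\circ\pi_{-k}={\rm Id}$ on $\mathcal{K}$, so $\mathcal{K}=\mathcal{L}(\pi_{-k}(\mathcal{K}))$ with $\mathcal{L}$ globally defined. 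As $P$ is connected and $U\setminus D$ may be taken connected, $p^{-1}(U\setminus D)$ is connected and $\mathcal{K}=\ker(\nabla^\kappa_{\omega_0})$, so every germ of $\mathcal{K}$ over $p^{-1}(U\setminus D)$ extends to a global section; hence $\mathcal{K}|_{p^{-1}(U\setminus D)}$ has trivial monodromy and is a constant sheaf. This is the local claim, and Lemma~\ref{lemmalocalmonodromy} finishes the proof.

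I expect the main obstacle to be the reduction to $\pi_{-k}(A')=0$: invariance of the hypersurface $\tilde{D}_\alpha$ by a rank-one foliation $\mathcal{T}_B$ is a \emph{closed} condition on $[B]$, so non-invariance need not pass to the limit of the rescaled directions, and one must instead show that the translated distinguished curves themselves converge to an $A'$-distinguished curve meeting $\tilde{D}_\alpha$ once — or bypass the issue by applying Theorem~\ref{Lie-Cartan} to two independent Levi-translates $\operatorname{Ad}(g^{-1})A$ ($g\in G_0$) of $A$, which still meet $\tilde{D}_\alpha$ in a single point, and using that for $k\ge 2$ the $G_0$-module $\mathfrak{g}_{-k}$ has no invariant line, so that the two classes $[s]_{\mathcal{V}/\mathcal{V}_{\operatorname{Ad}(g^{-1})A}}$ together determine $\pi_{-k}(s)$ and hence, via $s=\mathcal{L}(\pi_{-k}(s))$, the section $s$ itself. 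The remaining ingredients — lifting spirals (Proposition~\ref{branchedtaucondition}), spreading over a Zariski-open subset of $D_\alpha$ (Lemma~\ref{lemmageodesicaffine}), and the monodromy bookkeeping (as in the proof of Theorem~\ref{Lie-Cartan}) — are routine adaptations.
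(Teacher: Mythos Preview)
Your overall architecture is sound, but it is considerably more elaborate than the paper's own argument, and the extra step you single out as the ``main obstacle'' is precisely the one the paper does \emph{not} take.

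The paper proceeds directly: it lifts the given $A$-spiral via Proposition~\ref{branchedtaucondition} to an $A$-distinguished curve $\tilde{\Sigma}$ with $\tilde{\Sigma}\cap\tilde{D}=\{e_0\}$, applies Theorem~\ref{Lie-Cartan} to this very $A$, and then asserts that since $k>1$ and $A\notin\mathfrak{g}_{-k}$ the subspaces $\mathbb{C}A$ and $\mathfrak{g}_{-k}$ are independent, so $\pi_{-k}(\ker\nabla^\kappa_{\omega_0})$ is constant on a neighbourhood of $e_0$; Lemma~\ref{caractersiticparabolic} then gives that $\ker\nabla^\kappa_{\omega_0}$ itself is constant there, and the proof ends. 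There is no passage to a modified direction $A'\in\mathfrak{g}^{-k+1}$, no grading degeneration, no use of two Levi translates, and Lemma~\ref{lemmalocalmonodromy} is not invoked.

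Your worry --- that $\pi_{-k}$ factors through $\mathcal{V}/\mathcal{V}_A$ only when $A\in\mathfrak{g}^{-k+1}$, whereas $A\in\mathfrak{g}_-\setminus\mathfrak{g}_{-k}$ merely forces $\mathbb{C}A\cap\mathfrak{g}_{-k}=0$ --- is a genuine point, and the paper's one-line ``Thus'' does not spell out how it is handled. So your attempt to first replace $A$ by some $A'$ with $\pi_{-k}(A')=0$ is a reasonable precaution, but it introduces exactly the unresolved limiting argument you flag, and your fallback with two $G_0$-translates relies on a claim about $\mathfrak{g}_{-k}$ having no $G_0$-invariant line that you do not prove. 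In short: the paper's route is shorter but elliptical at the same place where your route is longer but incomplete; neither the degeneration nor the two-translate fix is needed to match the paper, and if you accept the paper's passage from ``$\mathbb{C}A\cap\mathfrak{g}_{-k}=0$'' to ``$\pi_{-k}(\mathcal{K})$ constant'', your reduction step can simply be dropped.
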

\begin{proof} By Proposition~\ref{branchedtaucondition}, we can suppose that the curves $\Sigma$ in the statement are holomorphic spirals, i.e., admit lifts to $A$-distinguished curves $\tilde{\Sigma}$ with $\tilde{\Sigma}\cap \tilde{D} = \{e_0\}$, for some $e_0 \in p^{-1}(x_0)$ and $A\in \mathfrak{g}_-\setminus \mathfrak{g}_{-k}$.

 Pick an irreducible component $\tilde{D}_\alpha$, let $e_0$ be as above and apply the Theorem~\ref{Lie-Cartan}. Since $k>1$, $\mathbb{C}A$ and $\mathfrak{g}_{-k}$ are independent subspaces in $\mathfrak{g}$. Thus, the projection $\pi_{-k}\big(\ker\big(\nabla^\kappa_{\omega_0}\big)\big)$ extends as a~constant $\mathbb{C}$-subsheaf of $\mathcal{V}_{\mathfrak{g}_{-k}}\bigl(* \tilde{D}\bigr)$ on a neighborhood $U$ of $e_0$. The image of a constant sheaf by a morphism of $\mathbb{C}$-sheaves is a constant sheaf, so by Lemma~\ref{caractersiticparabolic}, $\ker\big(\nabla^{\kappa}_{\omega_0}\big)$ extends as a~constant $\mathbb{C}$-subsheaf of $\mathcal{V}\bigl(* \tilde{D}\bigr)$ on $U$. The proof is then achieved.
\end{proof}

\begin{Remark} Corollary~\ref{proprextparab} admits a meromorphic version as in the degree one case, see Remark~\ref{remarkmeromorphicproprext}. \end{Remark}
\section[Application to the classification of meromorphic affine connections]{Application to the classification of meromorphic affine \\ connections}\label{section4}

In this section, we apply Corollary~\ref{proprextaffine} to the classification of some meromorphic geometric structures. Namely, we give a meromorphic version of Theorem~\ref{theoremBDM} for the \textit{spiral holomorphic branched affine connections}, a subcategory of meromorphic affine connections (see Definition~\ref{branchedaffine}). For, we extend the classical equivalence between affine connections and affine Cartan geometries to the meromorphic setting, and describe the distinguished curves of the corresponding meromorphic Cartan geometries.

\subsection[Equivalence between meromorphic affine connections and meromorphic affine Cartan geometries]{Equivalence between meromorphic affine connections and meromorphic \\ affine Cartan geometries}\label{sectionequivalence}

In this subsection, we consider the complex affine group $G$ of dimension $n\geq 1$, and the complex linear group $P\subset G$. Our aim is to extend the well-known equivalence between affine Cartan geometries and affine connections (see \cite[pp.~362--365]{Sharpe}) to the meromorphic setting.

The restricted adjoint representation $\operatorname{Ad}\colon P \longrightarrow {\rm GL}(\mathfrak{g})$ splits as the sum of two irreducible representations $\mathfrak{g}_-$, the subalgebra corresponding to the infinitesimal generators for the translations in $\operatorname{Aff}(\mathbb{C}^n)$, and $\mathfrak{p}={\rm Lie}(P)$. Consequently, if $E\overset{p}{\longrightarrow} M$ is a holomorphic $P$-principal bundle and $\omega_0$ is a meromorphic $(G,P)$-Cartan connection on $\big(E,\tilde{D}\big)$, then it splits as the sum
\begin{gather}\label{splitCartan} \omega_0 = \theta_0 \oplus \tilde{\omega} \end{gather}
 of a meromorphic solderform~$\theta_0$ on $\big(E,\tilde{D}\big)$ (see Definition~\ref{soldermeromorphe}) and a meromorphic $P$-principal connection~$\tilde{\omega}$ on $\big(E,\tilde{D}\big)$. Conversely, the direct sum of a meromorphic solder form and a meromorphic principal connection on $E$ gives a meromorphic affine Cartan geometry by the formula~\eqref{splitCartan}.

Consider the category $\mathcal{F}_{\rm conn}$ whose objects are triples $\big(\phi_0,\mathcal{E},\overline{\nabla}\big)$ formed by a meromorphic extension $(\phi_0,\mathcal{E})$ over a pair $(M,D)$ and a meromorphic connection $\big(\mathcal{E},\overline{\nabla}\big)$ on $(M,D)$, where an arrow between $(\mathcal{E},\nabla)$ over $(M,D)$ and $(\mathcal{E}',\nabla')$ over $(M',D')$ is defined as follows. It is an isomorphism $(\varphi,\Phi)$ of vector bundle (see \eqref{isovector}) preserving the meromorphic connections, in the sense that $\Phi^* (\varphi^\star \nabla')=\nabla$ where $\varphi^\star \nabla'$ is the pullback of connection (see Definition~\ref{pullback}) and $\Phi^*$ is the pullback in the sheaf-theoretic sense.

Define the map $f$ from the category $\mathcal{G}_{\rm aff}$ of meromorphic $(G,P)$-Cartan geometries on $(M,D)$ to $\mathcal{F}_{\rm conn}$ as follows. If $(E,\omega_0)$ is an object of $\mathcal{G}_{\rm aff}$, consider the meromorphic solderform $(E,\theta_0)$ (see Definition~\ref{soldermeromorphe}) defined by \eqref{splitCartan}, and let $(\mathcal{E},\phi_0)$ be the corresponding meromorphic extension on~$(M,D)$ (see Proposition~\ref{solderequivalence}). Let $\overline{\nabla}$ be the meromorphic connection on $\mathcal{E}=E(\mathbb{C}^n)$ associated with $\tilde{\omega}$ (see~Proposition~\ref{correspondenceprincipal}). We let
 \begin{gather}\label{functorf} f(E,\omega_0)=\big(\phi_0,\mathcal{E},\overline{\nabla}\big).\end{gather}

Now, consider the subcategory $\mathcal{G}_{\rm aff}^0$ of $\mathcal{G}_{\rm aff}$ whose objects are holomorphic branched $(G,P)$-Cartan geometries, together with their isomorphisms. Consider a subcategory $\mathcal{F}_{\rm conn}^0$ of $\mathcal{F}_{\rm conn}$ obtained by intersecting with $\mathcal{F}^0$ (see Definition~\ref{meromorphicextension}).

\begin{Proposition}\label{equivalenceaffineextension} Let $(M,D)$ be a pair. The map $f$ defined by \eqref{functorf} extends to arrows as an equivalence of categories between $\mathcal{G}_{\rm aff}$ \big(resp.\ $\mathcal{G}_{\rm aff}^0$\big) and $\mathcal{F}_{\rm conn}$ \big(resp.\ $\mathcal{F}_{\rm conn}^0$\big).\end{Proposition}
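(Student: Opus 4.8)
The proof rests on a single structural fact about the affine model: the restricted adjoint representation splits $\mathfrak{g}=\mathfrak{g}_-\oplus\mathfrak{p}$ as a direct sum of $P$-subrepresentations, so both projections are $P$-equivariant. Consequently, writing $\omega_0=\theta_0\oplus\tilde{\omega}$ as in \eqref{splitCartan} is a bijection, compatible with the $P$-action, between meromorphic $\mathfrak{g}$-valued $P$-equivariant one-forms on $\big(E,\tilde{D}\big)$ and pairs consisting of a $P$-equivariant $\mathfrak{g}_-\simeq\mathbb{C}^n$-valued one-form $\theta_0$ and a $P$-equivariant $\mathfrak{p}$-valued one-form $\tilde{\omega}$ on the same bundle. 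I would first check that $\omega_0$ satisfies (i)--(iii) of Definition~\ref{Cartan} if and only if $\tilde{\omega}$ is a meromorphic principal connection (Definition~\ref{principal}) and $\theta_0$ a meromorphic solder form (Definition~\ref{soldermeromorphe}): condition~(i) says that $\omega_0$ restricted to each fibre is $\mathfrak{p}$-valued and equals the Maurer--Cartan form, which forces $\theta_0$ to vanish on $\ker({\rm d}p)$ and $\tilde{\omega}$ to satisfy the fibrewise normalisation; given that, (iii) at a point $e\in E\setminus\tilde{D}$ is equivalent, since $\tilde\omega(e)$ already maps $\ker({\rm d}p)$ isomorphically onto $\mathfrak{p}$, to $\theta_0(e)$ inducing an isomorphism $T_eE/\ker({\rm d}p)\to\mathfrak{g}_-$, i.e.\ pointwise surjectivity of $\theta_0$. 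Finally $\omega_0$ is holomorphic precisely when both $\theta_0$ and $\tilde{\omega}$ are.

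Next I would invoke the two earlier equivalences. By Proposition~\ref{solderequivalence} the meromorphic solder form $(E,\theta_0)$ corresponds to the meromorphic extension $(\phi_0,\mathcal{E})$ with $\mathcal{E}=E(\mathbb{C}^n)=E(\mathfrak{g}/\mathfrak{p})$. For the affine model $\underline{{\rm Ad}}\colon P\longrightarrow{\rm GL}(\mathfrak{g}/\mathfrak{p})$ is an isomorphism, so $P'=\ker(\underline{{\rm Ad}})$ is trivial and $E$ is canonically the frame bundle of $\mathcal{E}$; hence Proposition~\ref{correspondenceprincipal} applies and the meromorphic $P$-principal connection $\tilde{\omega}$ corresponds to a meromorphic connection $\overline{\nabla}$ on $\mathcal{E}$ over $(M,D)$. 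Thus $f(E,\omega_0)=\big(\phi_0,\mathcal{E},\overline{\nabla}\big)$ is a well-defined object of $\mathcal{F}_{\rm conn}$, lying in $\mathcal{F}_{\rm conn}^0$ when $\omega_0$ is holomorphic branched, by the last sentence of the previous paragraph together with Definitions~\ref{meromorphicextension} and~\ref{branchedCartan}.

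On arrows, an isomorphism $\Psi\colon E\setminus\tilde{D}\overset{\sim}{\longrightarrow} E'\setminus\tilde{D}'$ of affine Cartan geometries is a $\mathfrak{g}$-meromorphic isomorphism of $P$-principal bundles with $\Psi^\star\omega'_0=\omega_0$; since $\Psi^\star$ commutes with the $\operatorname{Ad}$-equivariant projections onto $\mathfrak{g}_-$ and $\mathfrak{p}$, and $\mathfrak{g}_-$ is a representation-theoretic direct summand of $\mathfrak{g}$ (so a $\mathfrak{g}$-meromorphic isomorphism is in particular $\mathbb{C}^n$-meromorphic), $\Psi$ is simultaneously an arrow $(E,\theta_0)\to(E',\theta'_0)$ of meromorphic solder forms and an arrow $(E,\tilde{\omega})\to(E',\tilde{\omega}')$ in the category of Proposition~\ref{correspondenceprincipal}. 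Applying the functor $m$ of Proposition~\ref{solderequivalence} and the functor of Proposition~\ref{correspondenceprincipal} then produces an isomorphism $(\varphi,\Phi)$ of meromorphic bundles with $\varphi(D)=D'$, $\Phi\circ\phi_0=\phi'_0$ and $\Phi^*(\varphi^\star\overline{\nabla}')=\overline{\nabla}$, i.e.\ an arrow of $\mathcal{F}_{\rm conn}$; functoriality of $f$ follows at once from that of $m$ and of the connection correspondence. For the pseudo-inverse, given $\big(\phi_0,\mathcal{E},\overline{\nabla}\big)$ let $E$ be the frame bundle of $\mathcal{E}$: Proposition~\ref{solderequivalence} supplies a meromorphic solder form $\theta_0$ on $E$ and Proposition~\ref{correspondenceprincipal} a meromorphic $P$-principal connection $\tilde{\omega}$, and by the first paragraph $\omega_0:=\theta_0\oplus\tilde{\omega}$ is a meromorphic affine Cartan geometry with $f$ mapping it back to the given triple up to canonical isomorphism; since $m$ and the connection correspondence are equivalences, the induced unit and counit are natural isomorphisms, so $f$ is an equivalence. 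Replacing ``meromorphic'' by ``holomorphic'' throughout and using the holomorphic cases of the first paragraph and of Propositions~\ref{solderequivalence} and~\ref{correspondenceprincipal} gives the statement for $\mathcal{G}_{\rm aff}^0$ and $\mathcal{F}_{\rm conn}^0$, the image of $\mathcal{G}_{\rm aff}^0$ being exactly the triples whose extension and connection are both holomorphic.

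The only delicate point is keeping the underlying principal bundles canonically identified: one must verify that the bundle $E$ carrying $\theta_0$ is, via $\mathcal{E}=E(\mathfrak{g}/\mathfrak{p})$ and the isomorphism $\underline{{\rm Ad}}\colon P\overset{\sim}{\longrightarrow}{\rm GL}_n(\mathbb{C})$ special to the affine case, precisely the frame bundle of $\mathcal{E}$ on which Proposition~\ref{correspondenceprincipal} operates, and that the two equivalences of Propositions~\ref{solderequivalence} and~\ref{correspondenceprincipal} are both restrictions of the single equivalence between $P$-equivariant $\mathcal{O}_E$-modules and $\mathcal{O}_M$-modules recalled in Section~\ref{Atiyahframe}, so that they agree on morphisms by construction. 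Once this compatibility is granted, everything else is a routine diagram chase through the definitions already in place.
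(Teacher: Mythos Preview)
Your proposal is correct and follows essentially the same approach as the paper: split $\omega_0=\theta_0\oplus\tilde{\omega}$ via the $P$-equivariant decomposition $\mathfrak{g}=\mathfrak{g}_-\oplus\mathfrak{p}$, then apply Proposition~\ref{solderequivalence} to the solder-form component and Proposition~\ref{correspondenceprincipal} to the principal-connection component, noting that both are restrictions of the single equivalence of Section~\ref{Atiyahframe}. The paper's proof is in fact terser than yours---it checks functoriality on arrows exactly as you do and then concludes in one line that $f$ is an equivalence ``since it is the restriction of the equivalence of categories from Proposition~\ref{solderequivalence}''; your explicit construction of the pseudo-inverse and your remark on identifying $E$ with the frame bundle of $\mathcal{E}$ via $\underline{{\rm Ad}}$ make the argument more self-contained but do not depart from it.
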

\begin{proof} Let $\Psi\colon E \longrightarrow E'$ be an arrow between two meromorphic $(G,P)$-Cartan geometries $(E,\omega_0)$ and $(E',\omega'_0)$ over $(M,D)$ and $(M',D')$, and $\big(\phi_0,\mathcal{E},\overline{\nabla}\big)$ and $\big(\phi'_0,\mathcal{E}',\overline{\nabla}'\big)$ their images through $f$. So $\Psi$ is a morphism of holomorphic principal bundles between the frame bundles and we define $f(\Psi)=(\varphi,\Phi)$ as the image of $\Psi$ through the equivalence described in Section~\ref{Atiyahframe}. By construction, since $\Psi^\star \omega'_0= \omega_0$, we have $\Psi^\star \theta'_0 = \theta_0$ and $\Psi^\star \tilde{\omega}'=\tilde{\omega}$. The first condition implies that $(\varphi,\Phi)$ is an arrow of meromorphic extensions (see Definition~\ref{meromorphicextension}), while the second one implies that it preserves the meromorphic connections $\nabla$ and $\nabla'$ (see Lemma~\ref{lemmetau}). Hence, $f$ is a functor. Since it is the restriction of the equivalence of categories from Proposition~\ref{solderequivalence}, we obtain an equivalence of categories.
\end{proof}

We can make this equivalence more concrete, though less precise. Let $\big(\phi_0,\mathcal{E},\overline{\nabla}\big)$ be an object of $\mathcal{F}_{\rm conn}$ on $(M,D)$. Then \begin{gather}\label{inducedconnexion} \nabla = \phi_0^{-1} \overline{\nabla} \end{gather}
defines a meromorphic affine connection on $(M,D)$, we will call it the \textit{meromorphic affine connection induced by} $\big(\phi_0,\mathcal{E},\overline{\nabla}\big)$. Thus, there is a functor \begin{gather}\label{fonctmu} \mu\colon\ \mathcal{F}_{\rm conn} \longrightarrow \mathcal{A} \end{gather} to the category $\mathcal{A}$ of meromorphic affine connections on pairs. The composition $\mu\circ f$ is an equivalence of categories between meromorphic affine Cartan geometries and meromorphic affine connections as required.

We denote by $T_\nabla$ the torsion of $\nabla$ \eqref{torsionaffine}. There is the analogous notion of $\mathfrak{g}_-$-\textit{torsion} for an object $(E,\omega_0)$ of $\mathcal{G}_{\rm aff}$ on $(M,D)$. It is the $P$-equivariant meromorphic function~$\tau_{\omega_0}$ on~$E$ with values in $\mathbb{W}_{\mathfrak{g}_-} = \bigwedge^{2} (\mathfrak{g}_-)^* \otimes \mathfrak{g}_-$ defined as the projection of the Cartan curvature~$k_{\omega_0}$ of~$(E,\omega_0)$ (see Definition~\ref{curvature}) on $\mathbb{W}_{\mathfrak{g}_-}$ respective to $\bigwedge^{2} (\mathfrak{g}_-)^* \otimes \mathfrak{p}$.

 Finally, let us remark that for any object $\big(\mathcal{E},\phi_0,\overline{\nabla}\big)$ of $\mathcal{F}_{\rm conn}^0$, the meromorphic affine connection~\eqref{inducedconnexion} restricts as a holomorphic connection on the submodule $\mathcal{E}$. We then define: \begin{Definition}\label{branchedaffine} The category $\mathcal{A}^0$ is the subcategory of $\mathcal{A}$ whose objects are the meromorphic affine connections on $(M,D)$ preserving a locally free $\mathcal{O}_M$-module $\mathcal{E}$ with $TM\subset \mathcal{E}\subset TM(* D)$, in the above sense. Its objects 	are called holomorphic branched affine connections. \end{Definition}

 The following lemma is the only missing piece to restrict the equivalence to the branched affine subcategories.

 \begin{Lemma}\label{lemmabranchedaffine} Let $\nabla$ be a holomorphic branched affine connection on $(M,D)$. Then the submodule $\mathcal{E}\subset TM(* D)$ from Definition~{\rm \ref{branchedaffine}} is unique. \end{Lemma}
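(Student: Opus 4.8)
The statement to prove is: if $\mathcal{E}_1,\mathcal{E}_2$ are locally free $\mathcal{O}_M$-modules with $TM\subset\mathcal{E}_i\subset TM(* D)$ and $\nabla(\mathcal{E}_i)\subset\Omega^1_M\otimes\mathcal{E}_i$, then $\mathcal{E}_1=\mathcal{E}_2$ as subsheaves of $TM(* D)$. The plan is first to reduce to a local statement at a generic point of the polar divisor, and then to realize $(\mathcal{E}_i)_{x_0}$ as the $\mathcal{O}_{M,x_0}$-span of the germs of sections of $TM(* D)$ that are $\nabla$-horizontal along a curve transverse to $D$ — a set that visibly does not involve $i$. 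For the reduction: each $\mathcal{E}_i$ is locally free, hence reflexive, and $\mathcal{E}_1=\mathcal{E}_2=TM$ on $M\setminus D$, while the points of $D$ not lying in the smooth part of a unique irreducible component of $D$ form a closed analytic subset $Z\subset M$ of codimension $\ge 2$. Since sections of a locally free sheaf extend across $Z$ (Hartogs), and since $TM(* D)$, being torsion-free over $\mathcal{O}_M$, has no section supported on the nowhere dense $Z$, the restriction $\bigl(TM(* D)\bigr)(V)\hookrightarrow\bigl(TM(* D)\bigr)(V\setminus Z)$ is injective for every open $V$; so $\mathcal{E}_i(V)=\mathcal{E}_i(V\setminus Z)$ as subspaces of $\bigl(TM(* D)\bigr)(V\setminus Z)$, and it suffices to prove $(\mathcal{E}_1)_{x_0}=(\mathcal{E}_2)_{x_0}$ inside $\bigl(TM(* D)\bigr)_{x_0}$ for every $x_0$ in the smooth part of a single component $D_\alpha$.

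Fix such an $x_0$ and local coordinates $(z_1,\dots,z_n)$ centred at $x_0$ with $D=\{z_1=0\}$ near $x_0$. Put $A=\mathcal{O}_{M,x_0}$, $C=\ker(\partial_{z_1}|_A)=\mathbb{C}\{z_2,\dots,z_n\}$, $\widetilde A=A[z_1^{-1}]=\bigl(\mathcal{O}_M(* D)\bigr)_{x_0}$, $L_0=(TM)_{x_0}\cong A^{\,n}$, $L=\bigl(TM(* D)\bigr)_{x_0}\cong\widetilde A^{\,n}$, and $M_i=(\mathcal{E}_i)_{x_0}$, a free $A$-module with $L_0\subset M_i\subset L$ and, since $M_i\supset L_0$, with $M_i[z_1^{-1}]=L$. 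Contracting the holomorphic connection $\nabla|_{\mathcal{E}_i}$ with the holomorphic vector field $\partial_{z_1}$ gives a $\partial_{z_1}$-semilinear operator $\nabla_{\partial_{z_1}}\colon M_i\to M_i$, which in any holomorphic $A$-frame of $M_i$ reads $\partial_{z_1}+B$ with $B\in{\rm M}_n(A)$. The linear Cauchy problem $\partial_{z_1}Y=-BY$, $Y|_{z_1=0}={\rm Id}$, has a unique holomorphic solution $Y\in{\rm GL}_n(A)$ (an ODE in $z_1$ with holomorphic coefficients depending holomorphically on the parameters $z_2,\dots,z_n$), so $M_i$ admits an $A$-frame $(e_1,\dots,e_n)$ of $\nabla_{\partial_{z_1}}$-horizontal elements. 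As $\sum_j a_je_j$ ($a_j\in A$) is horizontal if and only if every $a_j$ lies in $C$, the $\mathbb{C}$-vector space $W_i:=\ker\bigl(\nabla_{\partial_{z_1}}|_{M_i}\bigr)$ is the free $C$-module $\bigoplus_j Ce_j$, and $M_i=A\cdot W_i$.

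It remains to identify $W_i$ with the $i$-independent module $K:=\ker\bigl(\nabla_{\partial_{z_1}}|_L\bigr)$. Clearly $W_i\subset K$. Conversely let $v\in K$; since $M_i[z_1^{-1}]=L$, there is a least integer $N\ge 0$ with $z_1^{N}v\in M_i$, and then $\nabla_{\partial_{z_1}}\bigl(z_1^{N}v\bigr)=N\,z_1^{N-1}v$ lies in $M_i$, because $\nabla_{\partial_{z_1}}$ preserves $M_i$ and $v$ is horizontal; by minimality $N=0$, so $v\in M_i$ and hence $v\in W_i$. Thus $W_1=K=W_2$, whence $M_1=A\cdot W_1=A\cdot W_2=M_2$, i.e. $(\mathcal{E}_1)_{x_0}=(\mathcal{E}_2)_{x_0}$, which finishes the argument.

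The one genuinely rigid step is the identification $W_i=K$: this is exactly where the hypothesis that $\nabla$ restrict to a \emph{holomorphic} connection on $\mathcal{E}_i$ is used to pin down the lattice $M_i\subset L$, and it is what breaks for an arbitrary meromorphic affine connection, which singles out no preferred $\mathcal{E}$. The remaining ingredients — reflexivity and Hartogs to pass from a generic point of $D$ to all of $M$, and the solvability of the transverse linear ODE because $\partial_{z_1}(z_1)=1$ — are routine, so the proof is essentially the bookkeeping above.
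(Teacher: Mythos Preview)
Your proof is correct, and the underlying mechanism is the same as the paper's: a holomorphic linear ODE governs the relation between the two lattices, and a solution of such an ODE that is a~priori meromorphic along $D$ must in fact be holomorphic. The implementations differ, however. The paper works at an arbitrary point $x\in M$ without any preliminary reduction: choosing holomorphic frames of $\mathcal{E}$ and $\mathcal{E}'$ on a neighbourhood $U$, the transition matrix $b$ between them satisfies the gauge equation ${\rm d}b=Ab-bA'$ with $A,A'$ holomorphic on $U$, and a result from Sabbah's book is invoked to conclude that $b$ (and by symmetry $b^{-1}$) extends holomorphically across $D\cap U$. Your argument instead first reduces via Hartogs to the smooth locus of a single component of $D$, and then gives an \emph{intrinsic} characterisation of $(\mathcal{E}_i)_{x_0}$ as the $\mathcal{O}_{M,x_0}$-span of the $\nabla_{\partial_{z_1}}$-horizontal germs in $\bigl(TM(*D)\bigr)_{x_0}$; the key minimality step ``$z_1^{N}v\in M_i\Rightarrow Nz_1^{N-1}v\in M_i$'' is precisely the elementary reason why the gauge equation has no meromorphic-but-not-holomorphic solutions. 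Your route is more self-contained (no external reference needed) and makes the role of the single transverse direction explicit, at the cost of the Hartogs detour; the paper's route is shorter and handles all points of $D$ at once, at the cost of citing a black box.
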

 \begin{proof} Let $E$ be the bundle of holomorphic frames for $\mathcal{E}$, and $\tilde{\omega}$ be the meromorphic principal connection on $R^1(M)$ corresponding to $\nabla$ (see Proposition~\ref{correspondenceprincipal}). Suppose there exists another rank~$n$ locally free submodule $\mathcal{E}'$ of $TM(* D)$ such that $\nabla$ restricts to a holomorphic connection on~$\mathcal{E}'$, and let $\tilde{\omega}'$ be the corresponding holomorphic principal connection on its bundle of holomorphic frames $E'$.

 Pick a point $x\in M$, and a neighborhood $U$ of $x$ in $M$ with two basis $(\overline{s}_1,\dots,\overline{s}_n)$ of~$\mathcal{E}\vert_U$ and $\big(\overline{t}_1,\dots,\overline{t}_n\big)$ of $\mathcal{E}'\vert_U$. Denote by $\sigma$, $\sigma'$ the holomorphic sections of $R^1(M\setminus D)$ on $U\setminus D$ corresponding respectively to these basis, and $b$ be the unique holomorphic function on $U\setminus D$ with values in ${\rm GL}_n(\mathbb{C})$ such that $\sigma'=\sigma\cdot b$. The classical gauge formula implies that $b$ must be a solution of the differential equation
 \begin{gather*} {\rm d}(b) = Ab - bA', \end{gather*}
 where $A$ (resp.\ $A'$) is the matrix of $\nabla$ in the basis $\sigma$ (resp.\ $\sigma'$). Since $A$ and $A'$ are holomorphic on $U$, we can use the proof of the Proposition II.2.13 in \cite{Sabbah} to obtain that $b$ extends on $U$ as a~holomorphic function. Reversing the roles of $\sigma$ and $\sigma'$, this is also true for $b^{-1}$, so that $\mathcal{E}$ and~$\mathcal{E}'$ coincide over $U$. We get the unicity.
 \end{proof}

\begin{Corollary}\label{equivalencetorsion} The composition of the equivalence from Proposition~{\rm \ref{equivalenceaffineextension}} and the map given by~\eqref{fonctmu} gives an equivalence of categories between $\mathcal{G}_{\rm aff}$ \big(resp.\ $\mathcal{G}_{\rm aff}^0$\big) and $\mathcal{A}$ \big(resp.\ $\mathcal{A}^0\big)$.
\end{Corollary}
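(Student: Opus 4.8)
The plan is to lean on Proposition~\ref{equivalenceaffineextension}, which already gives an equivalence $f\colon\mathcal{G}_{\rm aff}\to\mathcal{F}_{\rm conn}$ restricting to an equivalence $\mathcal{G}_{\rm aff}^0\to\mathcal{F}_{\rm conn}^0$; it then suffices to prove that the functor $\mu$ of \eqref{fonctmu} is an equivalence $\mathcal{F}_{\rm conn}\to\mathcal{A}$ whose restriction is an equivalence $\mathcal{F}_{\rm conn}^0\to\mathcal{A}^0$, and to compose. I would check essential surjectivity, faithfulness and fullness of $\mu$. For essential surjectivity, a meromorphic affine connection $\nabla$ on $(M,D)$ is the image under $\mu$ of the object $\big({\rm Id}_{TM(* D)},TM,\nabla\big)$ of $\mathcal{F}_{\rm conn}$; a holomorphic branched affine connection $\nabla$ preserves, by Definition~\ref{branchedaffine}, a submodule $TM\subset\mathcal{E}\subset TM(* D)$ which is \emph{unique} by Lemma~\ref{lemmabranchedaffine}, so with $\overline{\nabla}$ the holomorphic connection on $\mathcal{E}$ induced by $\nabla$, the triple $\big({\rm Id},\mathcal{E},\overline{\nabla}\big)$ is a well-defined object of $\mathcal{F}_{\rm conn}^0$ lying over $\nabla$ (and, conversely, the remark preceding Definition~\ref{branchedaffine} gives $\mu(\mathcal{F}_{\rm conn}^0)\subset\mathcal{A}^0$). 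Faithfulness is immediate: an arrow $(\varphi,\Phi)$ of $\mathcal{F}_{\rm conn}$ between $\big(\phi_0,\mathcal{E},\overline{\nabla}\big)$ and $\big(\phi_0',\mathcal{E}',\overline{\nabla}'\big)$ satisfies the compatibility square $\Phi\circ\phi_0=(\varphi^*\phi_0')\circ{\rm d}\varphi$ of meromorphic extensions, hence $\Phi=(\varphi^*\phi_0')\circ{\rm d}\varphi\circ\phi_0^{-1}$ is determined by $\varphi=\mu(\varphi,\Phi)$ and the two objects.

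The substance is fullness. Given an arrow $\varphi$ of $\mathcal{A}$ from $\nabla=\phi_0^{-1}\overline{\nabla}$ to $\nabla'=(\phi_0')^{-1}\overline{\nabla}'$, i.e.\ a biholomorphism of pairs whose differential ${\rm d}\varphi$ intertwines $\nabla$ with $\varphi^\star\nabla'$, I would set $\Phi:=(\varphi^*\phi_0')\circ{\rm d}\varphi\circ\phi_0^{-1}$, an isomorphism of meromorphic bundles between $\mathcal{E}(* D)$ and $\varphi^*\mathcal{E}'(* D')$. The extension square commutes by construction, and a short conjugation computation from \eqref{inducedconnexion} (rewriting $\overline{\nabla}$ as $\phi_0\circ\nabla\circ\phi_0^{-1}$ and using that ${\rm d}\varphi$ intertwines $\nabla$ and $\varphi^\star\nabla'$) shows that $\Phi$ intertwines $\overline{\nabla}$ with $\varphi^\star\overline{\nabla}'$ (compare the use of Lemma~\ref{lemmetau} in the proof of Proposition~\ref{equivalenceaffineextension}). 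Thus $(\varphi,\Phi)$ is an arrow of $\mathcal{F}_{\rm conn}$ lying over $\varphi$, which gives fullness of $\mu\colon\mathcal{F}_{\rm conn}\to\mathcal{A}$; this is essentially the observation already recorded after \eqref{fonctmu}.

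The main obstacle is the branched case, in which an arrow of $\mathcal{F}_{\rm conn}^0$ requires $\Phi$ to restrict to an isomorphism of \emph{holomorphic} vector bundles $\mathcal{E}\to\varphi^*\mathcal{E}'$, so that $\mathcal{F}_{\rm conn}^0$ is \emph{not} full in $\mathcal{F}_{\rm conn}$ and the $\Phi$ just produced needs further control. Here I would use Lemma~\ref{lemmabranchedaffine} once more. Identifying $\mathcal{E}(* D)$ with $TM(* D)$ along $\phi_0$ and $\mathcal{E}'(* D')$ with $TM'(* D')$ along $\phi_0'$, the map $\Phi$ becomes ${\rm d}\varphi$, which carries the holomorphic subsheaf $TM$ isomorphically onto $\varphi^*TM'$ and intertwines the branched connections $\nabla$ and $\varphi^\star\nabla'$; hence ${\rm d}\varphi(\mathcal{E})$ is a rank-$n$ submodule of $\varphi^*TM'(* D')$ containing $\varphi^*TM'$ on which $\varphi^\star\nabla'$ restricts holomorphically, and the uniqueness in Lemma~\ref{lemmabranchedaffine}, applied to $\varphi^\star\nabla'$, forces ${\rm d}\varphi(\mathcal{E})=\varphi^*\mathcal{E}'$. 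The same uniqueness shows that every arrow of $\mathcal{A}$ between objects of $\mathcal{A}^0$ automatically respects their canonical submodules, so $\mathcal{A}^0$ is genuinely the full subcategory of $\mathcal{A}$ on the branched connections and the essential-surjectivity statement above is well posed. Combining the three points, $\mu$, and hence $\mu\circ f$, is an equivalence $\mathcal{G}_{\rm aff}\to\mathcal{A}$ restricting to an equivalence $\mathcal{G}_{\rm aff}^0\to\mathcal{A}^0$.
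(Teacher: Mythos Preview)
Your proposal is correct and follows the same route as the paper, which states the corollary without an explicit proof: the paper already asserts right after \eqref{fonctmu} that $\mu\circ f$ is an equivalence $\mathcal{G}_{\rm aff}\to\mathcal{A}$, and introduces Lemma~\ref{lemmabranchedaffine} as ``the only missing piece to restrict the equivalence to the branched affine subcategories''. Your argument simply fills in the details left to the reader---essential surjectivity via the tautological extension $({\rm Id},TM,\nabla)$ (respectively $({\rm Id},\mathcal{E},\overline{\nabla})$ with the canonical $\mathcal{E}$), faithfulness from the compatibility square, and fullness via $\Phi=(\varphi^*\phi_0')\circ{\rm d}\varphi\circ\phi_0^{-1}$---and invokes Lemma~\ref{lemmabranchedaffine} exactly where the paper intends, to force ${\rm d}\varphi(\mathcal{E})=\varphi^*\mathcal{E}'$ so that $\Phi$ is a holomorphic, not merely meromorphic, isomorphism in the branched case.
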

Recall that, given any meromorphic affine connection $\nabla$ on $(M,D)$, a local holomorphic vector field $\overline{X}$ on an open subset $U\subset M$ is a \textit{Killing field} for $\nabla$ iff the pullback of $\nabla$ by its flows is again $\nabla$. We denote by $\mathfrak{k}\mathfrak{i}\mathfrak{l}\mathfrak{l}_{\nabla}$ the subsheaf of $TM\setminus D$ whose sections are the Killing field for~$\nabla$. By Corollary~\ref{equivalencetorsion}, we obtain the following lemma.

\begin{Lemma}If $(E,\omega_0)$ is an object of $\mathcal{G}_{\rm aff}$ on a pair $(M,D)$, and $\nabla$ the corresponding meromorphic affine connection on $(M,D)$, then $\mathfrak{k}\mathfrak{i}\mathfrak{l}\mathfrak{l}_{M,\omega_0}=\mathfrak{k}\mathfrak{i}\mathfrak{l}\mathfrak{l}_{\nabla}$. \end{Lemma}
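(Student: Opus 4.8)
The plan is to derive the equality from the equivalence of categories in Corollary~\ref{equivalencetorsion}, after noting that there is nothing to prove over the polar locus. Indeed, both $\mathfrak{k}\mathfrak{i}\mathfrak{l}\mathfrak{l}_{M,\omega_0}$ and $\mathfrak{k}\mathfrak{i}\mathfrak{l}\mathfrak{l}_{\nabla}$ are, by Definition~\ref{kill} and by the definition of $\mathfrak{k}\mathfrak{i}\mathfrak{l}\mathfrak{l}_{\nabla}$ preceding this lemma, subsheaves of the tangent sheaf of $M$ restricted to $M\setminus D$. Over $M\setminus D$ the meromorphic affine Cartan geometry $(E,\omega_0)$ restricts to a genuine holomorphic affine Cartan geometry and $\nabla$ to a holomorphic affine connection; the isomorphism $\phi_0$ of the induced meromorphic extension identifies $E|_{M\setminus D}$ with the holomorphic frame bundle $R^1(M\setminus D)$; and $\mu\circ f$ from Corollary~\ref{equivalencetorsion} restricts to an equivalence between these holomorphic objects. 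Under this identification each local biholomorphism $\varphi$ of an open subset of $M\setminus D$ has its canonical prolongation $\varphi^{(1)}$ to a principal-bundle automorphism, and conversely any principal-bundle automorphism of $E|_{M\setminus D}$ covering $\varphi$ and preserving the coframe $\omega_0$ is this $\varphi^{(1)}$, since the solder part $\theta_0$ in the splitting $\omega_0=\theta_0\oplus\tilde\omega$ of \eqref{splitCartan} already determines it.

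First I would prove $\mathfrak{k}\mathfrak{i}\mathfrak{l}\mathfrak{l}_{M,\omega_0}\subset\mathfrak{k}\mathfrak{i}\mathfrak{l}\mathfrak{l}_{\nabla}$. Let $\overline X$ be a section over $U\subset M\setminus D$ with a lift $X$ to $p^{-1}(U)$ satisfying $\phi_X^{t\star}\omega_0=\omega_0$. Since $X$ lifts $\overline X$, the flow $\phi_X^t$ preserves the fibers of $p$ and covers $\phi_{\overline X}^t$; together with the invariance of the coframe $\omega_0$ this makes $\phi_X^t$, over suitable shrinking open sets $V$, an isomorphism in $\mathcal{G}_{\rm aff}$ from $(E|_V,\omega_0)$ to $(E|_{\phi_{\overline X}^t(V)},\omega_0)$ (being the canonical prolongation of $\phi_{\overline X}^t$, it is automatically $P$-equivariant). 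Applying the functor $\mu\circ f$ of Corollary~\ref{equivalencetorsion} yields an isomorphism of holomorphic affine connections whose underlying biholomorphism is exactly $\phi_{\overline X}^t$; hence $(\phi_{\overline X}^t)^\star\nabla=\nabla$ for all small $t$, that is $\overline X\in\mathfrak{k}\mathfrak{i}\mathfrak{l}\mathfrak{l}_{\nabla}(U)$.

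For the converse, let $\overline X\in\mathfrak{k}\mathfrak{i}\mathfrak{l}\mathfrak{l}_{\nabla}(U)$ with $U\subset M\setminus D$, so that its local flow $\varphi^t=\phi_{\overline X}^t$ consists of local automorphisms of $\nabla|_{M\setminus D}$. Essential surjectivity and fullness of $\mu\circ f$ provide, for each small $t$ and on the relevant open sets, an isomorphism $\Psi^t$ of $(E,\omega_0)$ covering $\varphi^t$; faithfulness then forces $\Psi^0=\mathrm{id}$ and $\Psi^{t+s}=\Psi^t\circ\Psi^s$ wherever both sides make sense. By the last remark of the first paragraph, $\Psi^t$ is the canonical prolongation $(\varphi^t)^{(1)}$, which depends holomorphically on $t$; hence $\{\Psi^t\}_t$ is the holomorphic flow of a holomorphic vector field $X$ on $p^{-1}(U)$ lifting $\overline X$, and $\phi_X^{t\star}\omega_0=\Psi^{t\star}\omega_0=\omega_0$. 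Therefore $\overline X\in\mathfrak{k}\mathfrak{i}\mathfrak{l}\mathfrak{l}_{M,\omega_0}(U)$, which finishes the proof.

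The step I expect to be the main obstacle is the final one of the converse inclusion: promoting the abstract family $\{\Psi^t\}_t$ of lifts, which the equivalence only delivers up to the flow relations, to a genuine holomorphic one-parameter group induced by a vector field $X$ on $E$. The clean way around it is precisely the identification $\Psi^t=(\varphi^t)^{(1)}$ on the frame bundle. Alternatively one can avoid flows altogether and argue infinitesimally, establishing $\mathcal{L}_X\omega_0=0\iff\mathcal{L}_{\overline X}\nabla=0$ directly from the local form \eqref{splitCartan} of $\omega_0$ and the correspondences of Propositions~\ref{solderequivalence} and~\ref{correspondenceprincipal}; this trades the regularity point for a routine but longer computation.
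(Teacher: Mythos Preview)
Your proposal is correct and follows the same route the paper indicates: the paper states the lemma as an immediate consequence of Corollary~\ref{equivalencetorsion} and gives no further argument, while you simply unpack that equivalence at the level of local flows. The only addition on your side is the explicit identification $\Psi^t=(\varphi^t)^{(1)}$ via the solder form to get a genuine one-parameter group, which is the natural way to make the implicit step precise.
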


\subsection{Geodesics of spiral holomorphic branched affine connections}

Let $\nabla$ be a holomorphic affine connection on a complex manifold $M$, and $(E,\omega)$ be a holomorphic affine Cartan geometry inducing it. Recall that \textit{geodesics} $\Sigma\subset M$ of $\nabla$ are the projections of the $A$-distinguished curve of $(E,\omega_0)$ for $A\in \mathfrak{g}_-$. Equivalently, these are the images of holomorphic parametrized curves $\gamma\colon D(0,\epsilon) \longrightarrow M$ such that
\begin{gather}\label{affinegeodesic} \gamma^\star \nabla \left({\rm d}\gamma\left(\der{}{t}\right)\right) = 0, \end{gather}
where ${\rm d}\gamma\big(\der{}{t}\big)$ is the image of the canonical vector field on the open disk $D(0,\epsilon) \subset \mathbb{C}$ through the differential ${\rm d}\gamma \colon TC \longrightarrow \mathcal{O}_C\otimes \gamma^{-1}TM$, and $\gamma^\star \nabla$ is the pullback (see Definition~\ref{pullback}).
\begin{Definition}\label{meromorphicgeodesics}Let $\nabla$ be a meromorphic affine connection on a pair $(M,D)$. A \textit{geodesic} of $\nabla$ is a curve $\Sigma \subset M$ whose restriction to $M\setminus D$ is locally the image of a geodesic of the holomorphic affine connection $\nabla\vert_{M\setminus D}$ in the above sense.
 \end{Definition}

Note that such a definition permits any holomorphic curve $\gamma$ with image contained in $D$ to be a geodesic for $\nabla$. This is coherent with the classical definition \eqref{affinegeodesic} since the pullback $\gamma^\star \nabla$ will often be the zero morphism for such a curve (see Example~\ref{exemplegeodesicisotrope}).

\begin{Lemma}\label{caracterisationbranchedgeodesics} Let $\nabla$ be a holomorphic branched affine connection on a pair $(M,D)$. Let $\Sigma \subset M$ be a curve. Then the following assertions are equivalent:
 \begin{itemize}\itemsep=0pt
 \item[$(i)$] $\Sigma$ is a geodesic for $\nabla$.
\item[$(ii)$] For any non-constant holomorphic curve $\gamma\colon D(0,\epsilon) \longrightarrow \Sigma$, $\gamma^\star \nabla$ is the null morphism or there exists a holomorphic function $h_\gamma$ on $D(0,\epsilon)$, which does not identically vanish, and such that
\[\gamma^\star \nabla\left(\frac{1}{h_\gamma} \der{}{t}\right) =0,\]
 where $\der{}{t}$ and $\gamma^\star \nabla$ are defined as above.
 \end{itemize}\end{Lemma}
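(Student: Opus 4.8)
The plan is to reduce the statement to a local one on $\Sigma$ and on $M$, and then to split the argument according to whether the image of $\gamma$ lies in $D$. Fix a point of $\Sigma$ and a chart. Recall that, $\nabla$ being a holomorphic branched affine connection, it preserves a locally free submodule $\mathcal{E}$ with $TM\subset\mathcal{E}\subset TM(* D)$, unique by Lemma~\ref{lemmabranchedaffine}, on which $\nabla$ restricts to a \emph{holomorphic} connection $\nabla\vert_{\mathcal{E}}$; equivalently, by Corollary~\ref{equivalencetorsion}, $\nabla$ is induced by a branched holomorphic affine Cartan geometry $(E,\omega_0)$ whose $A$-distinguished curves ($A\in\mathfrak{g}_-$) project onto the geodesics of $\nabla$ (see Definition~\ref{meromorphicgeodesics}). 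For any non-constant holomorphic $\gamma\colon C:=D(0,\epsilon)\to\Sigma$ there are two cases: either $\gamma(C)\subset D$, in which case $\mathcal{O}_C\otimes\gamma^{-1}TM(* D)=0$ exactly as in Example~\ref{exemplegeodesicisotrope} and $\gamma^\star\nabla$ is the null morphism, or $\gamma(C)\not\subset D$ and $Z:=\gamma^{-1}(D)$ is discrete in $C$; this second case is the one to analyse.

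For $(i)\Rightarrow(ii)$, assume $\Sigma$ is a geodesic and $\gamma(C)\not\subset D$. On $C\setminus Z$ the connection $\nabla$ is a genuine holomorphic affine connection, $\Sigma\cap(M\setminus D)$ is locally the image of a parametrised geodesic, and the classical theory of reparametrisation gives, locally on $C\setminus Z$, a factorisation $\gamma=\gamma_0\circ\varphi$ with $\gamma_0$ a parametrised geodesic and $\varphi$ holomorphic, so that ${\rm d}\gamma\big(\der{}{t}\big)=\varphi'\cdot u$ with $u=\big({\rm d}\gamma_0\big(\der{}{t}\big)\big)\circ\varphi$ a $\gamma^\star\nabla$-parallel section. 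Since affine parameters of a geodesic differ by $t\mapsto at+b$, these local $\varphi'$ agree up to a constant. Now I invoke the branched hypothesis: the image $v$ of ${\rm d}\gamma\big(\der{}{t}\big)$ in $\mathcal{O}_C\otimes\gamma^{-1}\mathcal{E}$ is a holomorphic section, and $\gamma^\star(\nabla\vert_{\mathcal{E}})$ is a holomorphic connection on the \emph{disk} $C$, hence admits a holomorphic nowhere-vanishing parallel frame $(e_i)$. Writing $v=\sum_i c_i e_i$ with $c_i\in\mathcal{O}_C$, the relation $v=\varphi'u$ on $C\setminus Z$ forces every Wronskian $c_i c_j'-c_j c_i'$ to vanish on $C\setminus Z$, hence on $C$, so $c_i=\lambda_i h_\gamma$ for global constants $\lambda_i$ and a single $h_\gamma\in\mathcal{O}_C$, not identically zero because $v\not\equiv 0$. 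Then $v=h_\gamma v_0$ with $v_0=\sum_i\lambda_i e_i$ parallel (hence nowhere vanishing), so $\gamma^\star\nabla\big(\tfrac1{h_\gamma}{\rm d}\gamma\big(\der{}{t}\big)\big)=\gamma^\star\nabla(v_0)=0$, which is $(ii)$.

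For $(ii)\Rightarrow(i)$, take a generic smooth point $x_0\in\Sigma\setminus D$, parametrise a branch of $\Sigma$ near $x_0$ by an immersion $\gamma\colon D(0,\epsilon)\to\Sigma$, and shrink so that $\gamma(D(0,\epsilon))\cap D=\emptyset$. Then $\gamma^\star\nabla$ is a holomorphic connection on a nonzero locally free module, so it is not the null morphism, and $(ii)$ provides a holomorphic $h_\gamma\not\equiv0$ with $\tfrac1{h_\gamma}{\rm d}\gamma\big(\der{}{t}\big)$ parallel. Near a point where $h_\gamma\neq0$ the equation "$\psi'\cdot(h_\gamma\circ\psi)$ is constant" has a solution $\psi$ with $\psi'\neq0$, i.e.\ a local biholomorphism, and then $\gamma\circ\psi$ has parallel velocity, i.e.\ is a parametrised geodesic whose image is $\Sigma$ locally. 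Thus $\Sigma\cap(M\setminus D)$ is locally the image of a parametrised geodesic on a dense open subset; since this property is the tangency of the canonical lift of $\Sigma$ to a fixed direction field (and is therefore closed), it holds everywhere, so $\Sigma$ is a geodesic.

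The main obstacle is the passage across $D$ in $(i)\Rightarrow(ii)$: a priori the reparametrisation function could acquire poles along $\gamma^{-1}(D)$, and it is precisely the branched hypothesis that excludes this — through the holomorphy of $\nabla\vert_{\mathcal{E}}$ on the disk used above or, equivalently, through Lemma~\ref{branchedfoliation}, since the $\tilde{\omega}$-horizontal lift $\tilde{\Sigma}$ of $\Sigma$ then extends holomorphically across $\tilde{D}$, trivialises the connection along $\Sigma$, and exhibits ${\rm d}\gamma\big(\der{}{t}\big)$ as $f(t)A$ with $f$ holomorphic. All remaining ingredients are the standard correspondence between geodesics and curves with parallel velocity.
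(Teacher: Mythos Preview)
Your proof is correct and reaches the same conclusion, but the route differs from the paper's in one interesting way. The paper works entirely upstairs on the principal bundle: it lifts $\gamma$ to the unique $\tilde{\omega}$-horizontal curve $\tilde{\gamma}$ (holomorphic across $\tilde{D}$ because, by Lemma~\ref{branchedfoliation}, the $\mathfrak{p}$-component $\tilde{\omega}$ of $\omega_0$ is a holomorphic principal connection on all of $E$), and then simply reads off $h_\gamma$ as the holomorphic function $\tilde{\gamma}^\star\theta_0\big(\der{}{t}\big)$, which takes values in a fixed line $\mathbb{C}A\subset\mathfrak{g}_-$ by continuity since it does so on the dense set $C\setminus Z$. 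You instead stay on the base: you pull back the holomorphic connection $\nabla\vert_{\mathcal{E}}$ to the disk, choose a global parallel frame, and run a Wronskian argument to force the coefficients $c_i$ of the velocity to be proportional over $\mathbb{C}$. These are the same argument in different clothes---a $\tilde{\omega}$-horizontal lift over $\Sigma$ is precisely a parallel frame for $\mathcal{E}\vert_\Sigma$, and you note this equivalence yourself in your last paragraph. The paper's version is a little shorter (no Wronskian, the constancy of the direction is immediate from holomorphy of $\theta_0$); yours is more self-contained on $M$ and does not require the reader to track the principal-bundle formalism. For $(ii)\Rightarrow(i)$ the two arguments are essentially identical once unwound: the paper identifies $\tilde{\gamma}$ as an $A$-distinguished curve, you reparametrise to obtain parallel velocity; both amount to the classical statement that a curve with velocity proportional to a parallel section is an unparametrised geodesic.
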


\begin{proof} Let $(E,\omega_0)$ be the unique holomorphic branched affine Cartan geometry inducing $\nabla$ through the equivalence of Proposition~\ref{equivalenceaffineextension}, and such that $\mathcal{E}=E(\mathbb{C}^n)$ is the submodule of Definition~\ref{branchedaffine}. By Lemma~\ref{branchedfoliation}, the $\mathfrak{p}$-component of $\omega_0$ defines a holomorphic $P$-principal connection on $E$. We denote by $\theta_0$ the $\mathfrak{g}_-$-component of $\omega_0$.

 We first prove that $(i)$ implies $(ii)$. By the above remark, for any choice of a point $e_0$ in the fiber of $\gamma(0)$, there exists an unique lift $\tilde{\gamma}$ of $\gamma$ to a holomorphic curve tangent to $\ker(\tilde{\omega})$ and satisfying $\tilde{\gamma}(0)=e_0$. Suppose that $\gamma^\star \nabla$ is not the null morphism. By definition of $\nabla$, the hypothesis~$(i)$ implies that for any $t_0 \in D(0,\epsilon)$ such that $\gamma(t_0)$ does not lie in $D$, there exists a~holomorphic function $h$ in a neighborhood of $t_0$ with $\tilde{\gamma}^\star \theta_0 \big(h(t)\der{}{t}\big)$ constant, that is there exists $A\in \mathfrak{g}_-$ with $\tilde{\gamma}^\star \theta_0\big(\der{}{t}\big) = \frac{1}{h(t)} A$ in a neighborhood of $t_0$. It is clear that the direction~$\mathbb{C}A$ is independent from the point $t_0$ as before. Now, since $\omega_0$ is a holomorphic branched Cartan connection, $\tilde{\gamma}^\star \theta_0$ is a~holomorphic one-form on $D(0,\epsilon)$ with values in $\mathbb{C}A$. In particular, $h_\gamma= \tilde{\gamma}^\star \theta_0\big(\der{}{t}\big)$ is a holomorphic function on $D(0,\epsilon)$, which is not identically vanishing by the assumption. By definition of~$\gamma^\star \nabla$ this implies $\gamma^\star \nabla\big(\frac{1}{h_\gamma}\der{}{t}\big)=0$. We have proved $(ii)$.

Now we prove $(ii)$ implies $(i)$. If $\gamma^\star \nabla$ is the null morphism, then by the Leibniz identity we must have $\mathcal{O}_\mathbb{C} \otimes \gamma^{-1} \mathcal{O}_{M}(D) = \underline{\{0\}}_{\mathbb{C}}$. Thus, the image of $\gamma$ must lie in $D$, and $\gamma$ is a geodesic. Suppose $\gamma^\star \nabla$ is not the null morphism. Then we can use the above description of $\gamma^\star \nabla\big(\der{}{t}\big)$ to conclude that $\tilde{\gamma}^\star \theta_0\big(\der{}{t}\big)$ is a holomorphic function with values in $\mathbb{C}A$ for some $A\in \mathfrak{g}_-$. Since $\tilde{\gamma}$ is tangent to $\ker(\tilde{\omega})$, we recover that its image $\tilde{\Sigma}$ is a $A$-distinguished curve for $(E,\omega_0)$ projecting onto the image of $\gamma$, that is $(i)$ is satisfied.
\end{proof}

\begin{Definition}\label{tauconnection} A meromorphic affine connection $\nabla$ on a pair $(M,D)$ is said to be a \textit{spiral} connection if no irreducible component of $D$ is invariant by the geodesics of $\nabla$ in the sense of Definition~\ref{meromorphicgeodesics}. \end{Definition}

The Lemma~\ref{caracterisationbranchedgeodesics} and the Proposition~\ref{branchedtaucondition} then admit the following consequence.

\begin{Corollary}\label{tauconnectionstaucondition} Let $\nabla$ be a holomorphic branched holomorphic $\nabla$ on a pair $(M,D)$. If $\nabla$ is a spiral connection, then any holomorphic branched affine Cartan geometry $(E,\omega_0)$ inducing $\nabla$ through the equivalence of Corollary~{\rm \ref{equivalencetorsion}} is strongly spiral $($see Definition~{\rm \ref{strongtaucondition})}. \end{Corollary}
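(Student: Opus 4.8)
The plan is to chain the equivalence of Corollary~\ref{equivalencetorsion} with the geometric characterisation of geodesics from Lemma~\ref{caracterisationbranchedgeodesics}, and then to invoke Proposition~\ref{branchedtaucondition}. First I would fix a holomorphic branched affine Cartan geometry $(E,\omega_0)$ inducing $\nabla$; by Lemma~\ref{lemmabranchedaffine} the associated submodule $\mathcal{E}=E(\mathbb{C}^n)\subset TM(*D)$ of Definition~\ref{branchedaffine} is uniquely determined, and by Lemma~\ref{branchedfoliation} the $\mathfrak{p}$-component $\tilde{\omega}$ of $\omega_0$ is a genuine holomorphic $P$-principal connection on $E$, with $\theta_0$ its $\mathfrak{g}_-$-component; this is exactly the setting in which the proof of Lemma~\ref{caracterisationbranchedgeodesics} operates.

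The core step is to identify the geodesics of $\nabla$ in the sense of Definition~\ref{meromorphicgeodesics} with the spirals of $(E,\omega_0)$ in the sense of Definition~\ref{spiral}. Given a geodesic $\Sigma$ of $\nabla$ not contained in $D$, I would reuse the computation in the proof of Lemma~\ref{caracterisationbranchedgeodesics}: locally on $M\setminus D$, $\Sigma$ is the image of a parametrised curve $\gamma$ with $\gamma^\star\nabla\bigl(\frac{1}{h_\gamma}\der{}{t}\bigr)=0$; lifting $\gamma$ through a chosen $e_0\in p^{-1}(\gamma(0))$ to the holomorphic curve $\tilde{\gamma}$ tangent to $\ker(\tilde{\omega})$, one obtains $\tilde{\gamma}^\star\theta_0\bigl(\der{}{t}\bigr)\in\mathbb{C}A$ for a fixed line $\mathbb{C}A$ with $A\in\mathfrak{g}_-$ independent of the base point, so the image $\tilde{\Sigma}$ of $\tilde{\gamma}$ is a leaf of $\mathcal{T}_A$. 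Hence $\Sigma\setminus D$ lifts to an $A$-distinguished curve, i.e.\ $\Sigma$ is an $A$-spiral; conversely every $A$-spiral is a geodesic of $\nabla$ by the reverse implication in Lemma~\ref{caracterisationbranchedgeodesics}. (Geodesics whose image lies inside $D$ produce no spiral, but they play no role in the non-invariance statement below.)

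With this identification in hand, the hypothesis that $\nabla$ is a spiral connection (Definition~\ref{tauconnection}: no irreducible component $D_\alpha$ of $D$ is invariant by the geodesics of $\nabla$) translates into the statement that no irreducible component $D_\alpha$ of $D$ is invariant by the spirals of $(E,\omega_0)$; after shrinking, this provides for each $\alpha$ a spiral $\Sigma$ with $\Sigma\cap D=\{x_0\}$ and $x_0\in D_\alpha$. This is precisely the hypothesis of Proposition~\ref{branchedtaucondition}, whose conclusion is that $(E,\omega_0)$ is strongly spiral in the sense of Definition~\ref{strongtaucondition}, which is what we want.

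The only delicate point I anticipate is the bookkeeping in the middle paragraph: one must verify that ``invariance of $D_\alpha$ by the geodesics of $\nabla$'' really corresponds, through Lemma~\ref{caracterisationbranchedgeodesics}, to ``invariance of $D_\alpha$ by the spirals of $(E,\omega_0)$'', and in particular that a geodesic contained in $D$ --- which is allowed by Definition~\ref{meromorphicgeodesics}, cf.\ Example~\ref{exemplegeodesicisotrope} --- does not interfere with this equivalence. Everything else is a direct appeal to the previously established results.
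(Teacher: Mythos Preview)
Your proposal is correct and follows essentially the same route as the paper's own proof: normalise $(E,\omega_0)$ via Lemma~\ref{lemmabranchedaffine}, use (the proof of) Lemma~\ref{caracterisationbranchedgeodesics} to identify geodesics of $\nabla$ restricted to $M\setminus D$ with projections of $A$-distinguished curves for $A\in\mathfrak{g}_-$ (hence with $A$-spirals of $(E,\omega_0)$), and then apply Proposition~\ref{branchedtaucondition}. Your discussion of geodesics lying inside $D$ is a helpful clarification, but the paper treats this point implicitly and the overall argument is the same.
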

\begin{proof} In virtue of Lemma~\ref{lemmabranchedaffine}, $(E,\omega_0)$ is isomorphic to the holomorphic branched Cartan geometry where $E=R(\mathcal{E})$ is the frame bundle of the unique submodule $\mathcal{E}\subset TM(* D)$ on which $\nabla$ restricts as a holomorphic connection, and $\omega_0=\theta_0 \oplus \tilde{\omega}$ where $\tilde{\omega}$ is the corresponding $P$-principal connection and $\theta_0$ the meromorphic solderform of $E$ (see Definition~\ref{soldermeromorphe}).

The proof of Lemma~\ref{caracterisationbranchedgeodesics} recall the basic fact that the restrictions of geodesics for $\nabla$ to $M\setminus D$ are exactly the projections through $p\colon E\longrightarrow M$ of the $A$-distinguished curves of $(E|_{M\setminus D},\omega_0)$, for some $A\in \mathfrak{g}_-$. These are in particular $A$-spirals for $(E,\omega_0)$. Thus, using Proposition~\ref{branchedtaucondition}, we get the desired implication. \end{proof}

\subsection[Spiral holomorphic branched affine connections in algebraic dimension zero]{Spiral holomorphic branched affine connections \\ in algebraic dimension zero}

Now, we will give an application of the results of Section~\ref{killparab} to the classification of affine meromorphic connections on some simply connected complex compact manifolds $M$. Most of them are adaptations of the arguments used in the proof of the principal theorem in \cite{BDM}, using the results of this article.

\begin{Theorem}\label{quasihomogeneousaffine} Let $(M,D)$ be a pair, with $M$ a simply connected complex compact manifold. If~$(M,D)$ bears a spiral quasihomogeneous meromorphic affine connection $\nabla$ $($see Definition~{\rm \ref{tauconnection})}, then it admits a meromorphic parallelism $\big(\overline{X}_1,\dots,\overline{X}_n\big)$, such that $\overline{X}_i$ is a Killing vector field for $\nabla$ when restricted to $M\setminus D$.

\end{Theorem}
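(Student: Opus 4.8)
The plan is to translate the statement, through the equivalence between meromorphic affine connections and meromorphic affine Cartan geometries, into the extension property for infinitesimal automorphisms proved in Section~\ref{killparab}, and then to combine it with the simple connectedness of $M$ and with quasihomogeneity. First I would pass to the Cartan side: by Corollary~\ref{equivalencetorsion}, $\nabla$ corresponds to a meromorphic affine $(G,P)$-Cartan geometry $(E,\omega_0)$ on $(M,D)$, where $(G,P)$ is the complex affine Klein geometry of dimension $n$; and by the identification $\mathfrak{k}\mathfrak{i}\mathfrak{l}\mathfrak{l}_{M,\omega_0}=\mathfrak{k}\mathfrak{i}\mathfrak{l}\mathfrak{l}_\nabla$ recorded right after Corollary~\ref{equivalencetorsion}, the local infinitesimal automorphisms of $(E,\omega_0)$ are precisely the local Killing fields of $\nabla$, forming a local system on $M\setminus D$.

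Next I would turn the \emph{spiral} hypothesis into a statement about $(E,\omega_0)$. Since no irreducible component $D_\alpha$ of $D$ is invariant by the geodesics of $\nabla$, and these geodesics are, over $M\setminus D$, exactly the projections of the $A$-distinguished curves with $A\in\mathfrak{g}_-$ (Definition~\ref{meromorphicgeodesics}), one gets that $(E,\omega_0)$ is strongly spiral (Definition~\ref{strongtaucondition}): in the branched case this is Corollary~\ref{tauconnectionstaucondition}, and in general it follows from applying Lemma~\ref{invariantfoliation} to the foliations $\mathcal{T}_A$ on $E$, since a geodesic not contained in $D_\alpha$ lifts over $M\setminus D$ to a leaf of some $\mathcal{T}_A$ that cannot be contained in $\tilde D_\alpha$. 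With $(E,\omega_0)$ strongly spiral and $M$ simply connected, Remark~\ref{remarkmeromorphicproprext}---the meromorphic version of Corollary~\ref{proprextaffine}(1), which in the end rests on Theorem~\ref{Lie-Cartan}---yields the extension property: the local system $\mathfrak{k}\mathfrak{i}\mathfrak{l}\mathfrak{l}_{M,\omega_0}$ on $M\setminus D$ extends to a local system $\mathfrak{k}$ of meromorphic vector fields with poles on $D$ on all of $M$.

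Then I would globalize. As $M$ is simply connected, $\mathfrak{k}$ is a constant sheaf on $M$, so the space of its global sections over $M$ is canonically its stalk at any point. Choose $x_0\in M\setminus D$ in the dense open locus where $\nabla$ is locally homogeneous---this is the one place quasihomogeneity enters; there the evaluation map $\mathfrak{k}_{x_0}\to T_{x_0}M$, which sends a Killing germ to its value, is surjective. Picking germs whose values form a basis of $T_{x_0}M$ and extending them to global sections $\overline X_1,\dots,\overline X_n$ of $\mathfrak{k}$ produces global meromorphic vector fields, Killing for $\nabla$ in restriction to $M\setminus D$, with $\overline X_1(x_0),\dots,\overline X_n(x_0)$ linearly independent. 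Finally $\overline X_1\wedge\cdots\wedge\overline X_n$ is a meromorphic section of $\bigwedge^n TM$ nonvanishing at $x_0$, hence nonzero on a dense open subset of the connected manifold $M$, so $(\overline X_1,\dots,\overline X_n)$ is a meromorphic parallelism, proving the theorem.

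The substantive content---producing from one spiral direction the $A$-distinguished curves through a generic point of each $\tilde D_\alpha$ for $n$ independent directions of $\mathfrak{g}_{-1}$ (this uses the transitivity of the Levi subgroup on lines of $\mathfrak{g}_{-1}$, Lemma~\ref{lemmageodesicaffine}) and managing the codimension-one bad locus inside $D$ through Lemma~\ref{lemmalocalmonodromy}---is already packaged in Section~\ref{killparab}. The main obstacle I expect here is therefore just the translation of Definition~\ref{tauconnection} into strong spirality of $(E,\omega_0)$ when $\nabla$ is only meromorphic rather than branched; the remaining point, that a \emph{meromorphic} parallelism only asks for generic rather than pointwise linear independence, is immediate.
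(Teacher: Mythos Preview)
Your proposal is correct and follows essentially the same route as the paper: pass to the associated affine Cartan geometry via Corollary~\ref{equivalencetorsion}, invoke the extension property for infinitesimal automorphisms (the paper cites Corollary~\ref{proprextaffine} directly; you more carefully go through strong spirality and Remark~\ref{remarkmeromorphicproprext}), use simple connectedness of $M$ to make the extended local system constant, and then quasihomogeneity to select $n$ independent Killing germs that globalize to a meromorphic parallelism. The obstacle you flag---that the implication ``spiral $\Rightarrow$ strongly spiral'' is only established in the branched case (Proposition~\ref{branchedtaucondition}), so your argument via Lemma~\ref{invariantfoliation} for the purely meromorphic case is not quite complete---is genuine, but the paper's own proof simply invokes Corollary~\ref{proprextaffine} (stated for branched geometries) without addressing this point either.
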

\begin{proof}
 Let $(E,\omega_0)$ be the meromorphic affine Cartan geometry on $M$ corresponding to $\nabla$ (see~Corollary~\ref{equivalencetorsion}). By the Corollary~\ref{proprextaffine}, it satisfies the extension property of infinitesimal automorphisms, i.e., the local system $\mathfrak{k}\mathfrak{i}\mathfrak{l}\mathfrak{l}_{\nabla}$ on $M\setminus D$ extends as a local system $\mathcal{K}$ on $M$, with $\mathcal{K}\subset TM(* D)$. Since $M$ is simply connected, this is a constant sheaf on $M$. Since $\nabla$ is assumed quasihomogeneous, we can pick $x\in M$ and a $\mathcal{O}_{M,x}$-basis $\overline{X}_{1,x},\dots,\overline{X}_{n,x}$ of $(TM)_x$ formed by germs of Killing fields for $\nabla$. These germs are thus restrictions of global meromorphic vector fields $\overline{X}_1,\dots,\overline{X}_n$ whose restrictions to $M\setminus D$ are elements of $\mathfrak{k}\mathfrak{i}\mathfrak{l}\mathfrak{l}_\nabla(M\setminus D)$. Since their germs at $x$ are linearly independent, there exists a Zariski-dense open subset $M\setminus S$ such that the restrictions of $\overline{X}_1,\dots,\overline{X}_n$ to any subset $U\subset M\setminus S$ are independent elements of $TM(U)$, i.e., $\big(\overline{X}_1,\dots,\overline{X}_n\big)$ is a meromorphic parallelism on $M$.
\end{proof}

We obtain the following theorem.

\begin{Theorem}\label{classificationaffine} Let $M$ be a compact complex manifold with finite fundamental group, and whose meromorphic functions are constants. Then $M$ does not bear any spiral branched holomorphic affine connection.
\end{Theorem}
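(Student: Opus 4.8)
The plan is to argue by contradiction, reduce to the simply connected case, and then combine Theorem~\ref{quasihomogeneousaffine} with the complex-geometric argument of \cite{BDM} to force the underlying branched affine Cartan geometry to be flat, which a compactness argument then rules out. So assume $M$ carries a spiral branched holomorphic affine connection $\nabla$ with polar divisor $D$.

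\emph{Reduction to the simply connected case.} Since $\pi_1(M)$ is finite, the universal cover $\pi\colon\widetilde M\to M$ is a finite (hence proper) étale covering, so $\widetilde M$ is compact and simply connected. The finite deck group acts on the field $\mathcal M(\widetilde M)$ with fixed field $\pi^\star\mathcal M(M)=\mathbb C$; since $\mathbb C$ is algebraically closed and each $f\in\mathcal M(\widetilde M)$ satisfies $\prod_\gamma\big(T-\gamma^\star f\big)=0$ over the fixed field, one gets $\mathcal M(\widetilde M)=\mathbb C$ as well. As $\pi$ is a local biholomorphism, $\pi^\star\nabla$ restricts to a holomorphic connection on $\pi^\star\mathcal E$, with $T\widetilde M\subset\pi^\star\mathcal E\subset T\widetilde M\big(*\pi^{-1}(D)\big)$, so it is still a branched holomorphic affine connection (Definition~\ref{branchedaffine}); moreover its geodesics are the local lifts of those of $\nabla$, so no component of $\pi^{-1}(D)$ is invariant by them and $\pi^\star\nabla$ is again spiral (Definition~\ref{tauconnection}). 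Replacing $(M,D,\nabla)$ by $\big(\widetilde M,\pi^{-1}(D),\pi^\star\nabla\big)$, we may and do assume $M$ is simply connected.

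\emph{From Killing fields to flatness.} Because $\mathcal M(M)=\mathbb C$, the meromorphic affine Cartan geometry $(E,\omega_0)$ associated with $\nabla$ through Corollary~\ref{equivalencetorsion} is quasihomogeneous (\cite[Theorem~1.2]{Dumitrescu1}); thus $\nabla$ is a spiral quasihomogeneous meromorphic affine connection, and Theorem~\ref{quasihomogeneousaffine} yields a meromorphic parallelism $\big(\overline X_1,\dots,\overline X_n\big)$ of $M$ whose members are Killing fields for $\nabla$ on $M\setminus D$, corresponding by Corollary~\ref{proprextaffine}(2) to holomorphic sections of $\mathcal V=\mathcal O_E\otimes\mathfrak g$. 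Writing $[\overline X_i,\overline X_j]_{TM}=\sum_k f_{ij}^k\overline X_k$ with the $f_{ij}^k$ meromorphic on $M$, hence constant, we see that $\mathfrak l=\bigoplus_i\mathbb C\overline X_i$ is a finite-dimensional complex Lie algebra of infinitesimal automorphisms of $(E,\omega_0)$, generically a parallelism. From here I would transcribe the argument of \cite{BDM}: extract from $\mathfrak l$ a commuting family of Killing fields that is still generically a parallelism, integrate it to a (local) action of a complex abelian Lie group $L$ with an open dense orbit $\Omega\subset M$, and carry out the study of the complex geometry of such manifolds — propagating the geometric invariants off $\Omega$ and across $D$ using compactness of $M$ and $\mathcal M(M)=\mathbb C$ — to conclude that $(E,\omega_0)$ is flat, i.e., its Cartan curvature satisfies $k_{\omega_0}\equiv 0$.

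\emph{Conclusion, and the main obstacle.} A flat branched affine Cartan geometry is locally isomorphic to the flat model off its branch locus, and its tractor connection on $E_G=E\times_P G$ is a flat \emph{holomorphic} principal connection (Lemma~\ref{curvaturetractor}, Definition~\ref{tractorconnection}); since $M$ is simply connected, this produces a holomorphic developing map $\mathrm{dev}\colon M\to G/P=\mathbb C^n$ that is a local biholomorphism on the nonempty open set $M\setminus D$ (where $\omega_0$ is a coframe). But the $n$ coordinate components of $\mathrm{dev}$ are holomorphic functions on the compact manifold $M$, hence constant, so $\mathrm{dev}$ is constant — impossible for $n\geq 1$. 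This contradiction proves the theorem. The delicate step is the middle one: promoting the abelian group action with open dense orbit to \emph{global} flatness of $(E,\omega_0)$, which is precisely where the complex-geometric input of \cite{BDM} — constancy of the geometric invariants forced by $\mathcal M(M)=\mathbb C$, extension across the polar and non-generic loci, and compactness of $M$ — has to be re-run in the branched setting; the other steps are either formal consequences of the results established above or direct translations of \cite{BDM}.
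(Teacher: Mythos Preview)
Your overall architecture (reduce to the simply connected case, apply Theorem~\ref{quasihomogeneousaffine} to get a commuting Killing parallelism, then derive a developing map into $\mathbb{C}^n$ and contradict compactness) matches the paper, and your reduction step and your endgame are fine. The gap is the middle step, where you propose to ``transcribe the argument of \cite{BDM}'' to force $(E,\omega_0)$ itself to be flat. In \cite{BDM} the Killing fields are \emph{holomorphic} on a compact manifold, hence complete, so they integrate to a genuine action of a complex abelian Lie group with an open dense orbit, and the subsequent complex-geometric analysis uses that action. Here the Killing fields produced by Theorem~\ref{quasihomogeneousaffine} are only sections of $TM(*D)$: they are meromorphic, need not be complete, and there is no reason their local flows patch into a global $L$-action. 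You flag this as ``the delicate step'' but do not indicate how to get around it, and it is not a routine adaptation.

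The paper avoids this obstacle by \emph{not} trying to show the affine Cartan geometry $(E,\omega_0)$ is flat. After obtaining the commuting Killing parallelism $(X'_i)$, it exploits the specific input of Corollary~\ref{proprextaffine}(2) --- the images $s_i=\omega_0(\tilde X'_i)$ are \emph{holomorphic} sections of $\mathcal V=\mathcal O_E\otimes\mathfrak g$ --- to run a Chern--Weil/degree argument: contracting the trace of the tractor curvature with the $X'_i$ gives $1$-forms that are shown to be closed holomorphic (using $\mathcal M(M)=\mathbb C$ and commutativity of the $X'_i$), hence zero on a simply connected compact $M$, so $\deg E(\mathfrak g)=0$; since $P=\mathrm{GL}_n(\mathbb C)$ gives $\deg E(\mathfrak p)=0$, one gets $\deg\mathcal E=0$. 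Then the holomorphic sections $\overline s_i\in\mathcal E(M)$ have nonvanishing wedge, so they form a global basis of $\mathcal E$, and the dual coframe composed with $\phi_0$ yields a \emph{branched holomorphic $(\mathbb C^n,\{0\})$-Cartan geometry} $\eta$ on $M$. This translation structure is flat simply because the $X'_i$ commute, and it gives the developing map to $\mathbb C^n$. In short: the missing idea in your proposal is to trade the unavailable integration-to-group-action step for the degree computation on $\mathcal E$ (powered by the holomorphicity from Corollary~\ref{proprextaffine}(2)) and to build a new, simpler flat geometry from the resulting basis rather than proving flatness of $(E,\omega_0)$.
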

\begin{proof} Suppose that $\nabla$ is a spiral branched holomorphic affine connection on $(M,D)$, denote by $\mathcal{E}$ the submodule of $TM(* D)$ from Lemma~\ref{lemmabranchedaffine}. Denote also $E$ the frame bundle of $\mathcal{E}$, so that in particular $\mathcal{E}=E(\mathfrak{g})/E(\mathfrak{p})$. Then by Proposition~\ref{equivalenceaffineextension}, there exists a unique branched holomorphic Cartan geometry of the form $(E,\omega_0)$ inducing $(\mathcal{E},\nabla)$.

Complete the meromorphic parallelism $\big(\overline{X}_i\big)_{i=1,\dots,n}$ as in the proof of Theorem~\ref{quasihomogeneousaffine} into a~basis $\big(\overline{X}_j\big)_{j=1,\dots,r}$ for the global meromorphic Killing fields of $\nabla$. A meromorphic parallelism is a~rigid geometric structure (see \cite{DumitrescuQuasi}), so by~\cite[Theorem 2]{Dumitrescu0}, the juxtaposition of~$\big(\overline{X}_j\big)_{j=1,\dots,r}$ and $\nabla$ is quasihomogeneous. Since $\nabla$ satisfies the extension property for the Killing vector fields~(see Corollary~\ref{proprextaffine}) and $M$ is simply connected, we obtain a meromorphic parallelism $\big(\overline{X}'_i\big)_{i=1,\dots,n}$ such that the restriction of each $\overline{X}'_i$ to $M\setminus D$ is a Killing field for $\nabla$ and commutes with each $\overline{X}_j$. In particular, each $\overline{X}'_i$ is a $\mathbb{C}$-linear combination of the $\big(\overline{X}_j\big)_{j=1,\dots,r}$, so $\big(\overline{X}'_i\big)_{i=1,\dots,n}$ are commuting meromorphic vector fields.

Now, let pick any Gauduchon metric on $M$ (called a \textit{standard metric} in \cite[p.~502]{Gauduchon}) and let us prove that the degree $\deg (\mathcal{E})$ of $\mathcal{E}$ with respect to this metric is zero. Let $(E,\omega_0)$ be the branched holomorphic affine Cartan geometry on $(M,D)$ corresponding to $\nabla$ (see Corollary~\ref{equivalencetorsion}). Then $\mathcal{E}=E(\mathfrak{g}/\mathfrak{p})=E(\mathfrak{g})/E(\mathfrak{p})$ by definition of $(E,\omega_0)$, and since $P={\rm GL}_n(\mathbb{C})$, we have $\deg (E(\mathfrak{p}))=0$ (see \cite[Corollary 4.2]{BD}).

We must then prove that $\deg (E(\mathfrak{g}))=0$. For, it is sufficient to prove that $C_1(R_{\nabla^{\omega_0}})$ vanishes identically, where $C_1$ is the trace on $\operatorname{End}(E(\mathfrak{g}))$ and $\nabla^{\omega_0}$ is the tractor connection (see Definition~\ref{tractorconnection}). We will prove that the meromorphic one-form $\eta_i = X'_i \lrcorner C_1(R_{\nabla^{\omega_0}})$ vanishes identically on $M$ for any $1\leq i\leq n$. By Lemma~\ref{curvaturetractor}, we have
 \begin{gather*}
 p^\star \eta_i = \tilde{X}'_i \lrcorner C_1(R_{p^\star \nabla^{\omega_0}}) = \underset{\tilde{\eta}_i^0}{\underbrace{{\rm d}C_1(\operatorname{ad}(s_i))}} + \underset{\tilde{\eta}^1_i}{\underbrace{\tilde{X}'_i \lrcorner \operatorname{ad}(\omega_0 \wedge \omega_0)}},\end{gather*}
 where $\tilde{X}'_i$ is the lifting of $X'_i$ to $E$ and $s_i = \omega_0(X'_i)$.

The meromorphic one-form $\tilde{\eta}^0_i$ is exact and $P$-equivariant. By a classical result on exact invariant forms on connected Lie groups, the restriction of $\tilde{\eta}^0_i$ to any fiber of $E\overset{p}{\longrightarrow} M$ corresponds to a homomorphism $\chi\colon P \longrightarrow \mathbb{C}$. Because $P={\rm GL}_n(\mathbb{C})$, any such homomorphism is trivial, so that $\tilde{\eta}^0_i$ vanishes on the fibers of $E\overset{p}{\longrightarrow} M$. Thus, it is the pullback of a meromorphic exact one-form $\eta^0_i$ on $M$. Moreover, by Corollary~\ref{proprextaffine}, $s_i$ is a holomorphic section of $\mathcal{V}$ on $E$, so that $\tilde{\eta}^0_i$ is a holomorphic one-form. Thus, $\eta^0_i$ is an exact holomorphic one-form on a simply connected compact complex manifold, i.e., vanishes everywhere.

Now, let us prove that $\tilde{\eta}^1_i = p^\star \eta_i$ vanishes everywhere. Consider $E_G=E\times G \overset{pi_G}{\longrightarrow}E$ and the holomorphic tractor connection $\tilde{\omega}$ on it (see Definition~\ref{tractorconnection}). Using the splitting $TE_G = \ker(\tilde{\omega}) \oplus \ker(d\pi_G)$, the pullback $\hat{\eta}^1_i= \pi_G^\star \tilde{\eta}^1_i$ uniquely decomposes as a sum
\[\hat{\eta}^1_i = \hat{\eta}_i^H \oplus \hat{\eta}_i \]
 with $\hat{\eta}_i^H$ a $G$-invariant meromorphic one-form on $E_G$, vanishing on $\ker(\tilde{\omega})$, and $\hat{\eta}^V_i$ vanishing on~$\ker(\tilde{\omega})$. In particular, $\hat{\eta}^H_i$ is the pullback of $\eta_i$ through the composition $p_G=p\circ \pi_G$, so that~$\hat{\eta}^V_i$ vanishes everywhere. Now, using Corollary~\ref{proprextaffine}, $\tilde{\eta}^1_i$ is a~holomorphic one-form, so that~$\eta_i$ is a~holomorphic one-form on $M$. Using the Lie--Cartan formula, we have
\[ {\rm d}\eta_i(X'_j,X'_k)=\mathcal{L}_{X'_j}\eta_i(X'_k) - \mathcal{L}_{X'_k}\eta_i(X'_j) - \eta_i([X'_j,X'_k]_{TM}). \]
 Since the only meromorphic functions on $M$ are the constants, we obtain \[\mathcal{L}_{X'_j}\eta_i(X'_k) =\mathcal{L}_{X'_k}\eta_i(X'_j) =0,\] and since the meromorphic vector fields $(X'_i)_{i=1,\dots,n}$ commute, $\eta_i$ is a closed holomorphic one-form. Since $M$ is simply connected and compact, $\eta_i$ vanishes everywhere. This proves that $C_1(R_{\nabla^{\omega_0}})$ vanishes everywhere, i.e., $\deg (E(\mathfrak{g}))=0$.

Hence, $\deg (\mathcal{E})=0$. Let $\overline{s}_1,\dots,\overline{s}_n$ be the images of $X'_1,\dots,X'_n$ through the morphism $\phi_0$, where $(\phi_0,\mathcal{E})$ is the holomorphic extension image of $(E,\omega_0)$ as in Proposition~\ref{equivalenceaffineextension}. Since $s_i=\omega_0(\tilde{X}'_i)$ is a section of $\mathcal{V}(E)$ for any $1\leq i \leq n$ from Corollary~\ref{proprextaffine}, each $\overline{s}_i$ is a section of~$\mathcal{E}(M)$. Since they are independent, the holomorphic section $\bigwedge_{i=1}^{n} \overline{s}_i$ of $\det(\mathcal{E})$ is not identically vanishing, thus $\det(\mathcal{E})$ is trivial and $\bigwedge_{i=1}^{n}\overline{s}_i$ never vanishes. It therefore forms a basis of $\mathcal{E}$ on $M$, and the dual sections $\overline{s}_1^*,\dots,\overline{s}_n^*$ are holomorphic sections of $\mathcal{E}^*$ on $M$. We obtain a meromorphic $(\mathbb{C}^n,\{0\})$-Cartan geometry $(M,\eta)$ on $(M,D)$ given by the meromorphic one-form
\[ \eta \,=\, \bigwedge_{i=1}^{n} \,(\overline{s}_i^* \circ \phi_0)\, \otimes \,\mathfrak{e}_i,\]
 where $(\mathfrak{e}_i)_{i=1,\dots,n}$ is the canonical basis of $\mathbb{C}^n$. Since $\omega_0$ is a holomorphic one-form on $E$, by definition of the associated meromorphic extension, $\phi_0$ restricts to an injective morphism of $\mathcal{O}_M$-modules from $TM$ to $\mathcal{E}$. Hence $\eta$ is a holomorphic one-form on $M$, i.e., $(M,\eta)$ is a branched holomorphic Cartan geometry. Because the $\eta$-constant vector fields $(X'_i)_{i=1,\dots,n}$ commute, it is in fact a flat branched holomorphic Cartan geometry. Since $M$ is simply connected and compact, there is a holomorphic submersion $\operatorname{dev}\colon M \longrightarrow \mathbb{C}^n$. This is impossible by the maximum principle. So $M$ cannot bear any spiral branched holomorphic affine connection.
\end{proof}

\section{Genericity of the spiral property on surfaces}
In this section, we fix a pair $(M,D)$ where $M$ is a complex surfaces and $D$ an effective divisor of~$M$ with irreducible and reduced components $(D_\alpha)_{\alpha\in I}$. We fix a submodule $\mathcal{E} \subset TM(* D)$ with $TM\subset \mathcal{E}$.
\begin{Definition}\label{A0} In the above setting, we denote by $\mathcal{A}^0_{\mathcal{E}}$ the set of holomorphic branched affine connections on $(M,D)$ whose associated submodule is $\mathcal{E}$ (see Definition~\ref{branchedaffine}). We denote by $\mathcal{A}^0_{\mathcal{E},\tau}$ the subset of~$\mathcal{A}^0_{\mathcal{E}}$ consisting of spiral meromorphic affine connection (see Definition~\ref{tauconnection}). \end{Definition} We prove a result of genericity for $\mathcal{A}^0_{\mathcal{E},\tau}$ (see Theorem~\ref{genericity}). We also give examples of flat and spiral holomorphic branched affine connections on compact complex manifolds of arbitrary dimension, and one example of a non-flat and spiral holomorphic branched affine connection on a complex compact threefold.

\subsection[Consequence of the existence of a spiral affine connection on the submodule]{Consequence of the existence of a spiral affine connection\\ on the submodule}
We begin with a necessary condition on $\mathcal{E}$ for $\mathcal{A}^0_{\mathcal{E},\tau}$ not being empty.
\begin{Proposition}\label{necessarynottau} Suppose the existence of a spiral branched holomorphic affine connection $\nabla$ in $\mathcal{A}^0_{\mathcal{E},\tau}$. Then $\mathcal{E}$ satisfies the following property. Let $D_\alpha$ be an irreducible component of $D$, and~$x\in D_\alpha$. Let $(z_1,z_2)$ be local coordinates on an open neighborhood $U$ of $x$ with
\[D_\alpha \cap U=\{z_1=0\}.\]
The matrix of any element $\varphi \in \operatorname{End}(\mathcal{E})(U)$, identified with an element of $\operatorname{End}(\mathcal{E})(* D)(U)$, in $\big(\der{}{z_1},\der{}{z_2}\big)$ takes the form
\begin{equation}\label{formnottau} \begin{pmatrix} * & z_1 f_1 \\ * & f_2 \end{pmatrix}, \end{equation} where $f_1$, $f_2$ are holomorphic functions on $U$.\end{Proposition}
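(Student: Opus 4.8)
The plan is to translate the statement about $\operatorname{End}(\mathcal{E})$ into a statement about the distinguished foliations $\mathcal{T}_A$ on the frame bundle $E = R(\mathcal{E})$ of the branched holomorphic affine Cartan geometry $(E,\omega_0)$ inducing $\nabla$ (which exists and is unique by Lemma~\ref{lemmabranchedaffine} and Proposition~\ref{equivalenceaffineextension}, with $\mathcal{E} = E(\mathfrak{g}/\mathfrak{p})$). By Corollary~\ref{tauconnectionstaucondition}, since $\nabla$ is spiral, $(E,\omega_0)$ is strongly spiral, so each irreducible component $\tilde{D}_\alpha$ of $\tilde{D} = p^{-1}(D)$ fails to be invariant by some $\mathcal{T}_A$; but in fact I want the finer information coming from Lemma~\ref{branchedfoliation} and its proof. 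The key local fact I will extract is this: over a neighborhood $U$ of $x \in D_\alpha$ with $D_\alpha \cap U = \{z_1 = 0\}$, with a holomorphic section $\sigma\colon U \to E$ giving a basis $(\der{}{z_1},\der{}{z_2})$ of $\mathcal{E}\vert_U$, the meromorphic connection matrix $A_\nabla$ of $\nabla$ in this basis has a specific polar behaviour forced by the fact that the spirals through a generic point of $D_\alpha$ cross $D_\alpha$ transversally. Concretely, the $A$-distinguished curve condition $\omega_0(\mathcal{T}_A) \subset \mathcal{O}_E(*\tilde{D})A$ combined with the computation in the proof of Lemma~\ref{branchedfoliation} shows that the geodesic equation $\gamma^\star\nabla(\der{}{t}/h_\gamma) = 0$ (Lemma~\ref{caracterisationbranchedgeodesics}) for a spiral $\Sigma$ transverse to $\{z_1 = 0\}$ at a generic point constrains the entries of $A_\nabla$: the pole of $A_\nabla$ along $z_1 = 0$ must be compatible with a solution curve $\gamma(t)$ with $\gamma_1'(0) \neq 0$ reaching $z_1 = 0$.

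The next step is the translation into $\operatorname{End}(\mathcal{E})$. Since $TM \subset \mathcal{E} \subset TM(*D)$ and $\mathcal{E}\vert_U$ has basis $(\der{}{z_1},\der{}{z_2})$ while $TM\vert_U$ has basis $(\der{}{z_1},\der{}{z_2})$ as well (wait — more care is needed: $\mathcal{E}$ is a submodule of $TM(*D)$ containing $TM$ of maximal rank, so locally $\mathcal{E}\vert_U$ is spanned by $z_1^{-a}\der{}{z_1}, z_1^{-b}\der{}{z_2}$ type elements after a change of frame; I will choose the frame so that the stated basis is the one from Definition~\ref{branchedaffine}, i.e.\ the frame bundle coordinates). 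An element $\varphi \in \operatorname{End}(\mathcal{E})(U)$ is a holomorphic endomorphism of the module $\mathcal{E}\vert_U$; viewing $\mathcal{E}$ inside $TM(*D)$ and writing $\varphi$ as a matrix in $(\der{}{z_1},\der{}{z_2})$ with meromorphic entries, the claim is that the matrix has the block form $\left(\begin{smallmatrix} * & z_1 f_1 \\ * & f_2\end{smallmatrix}\right)$ with $f_1, f_2$ holomorphic. I expect this to follow because $\operatorname{End}(\mathcal{E}) = \mathcal{E}^* \otimes \mathcal{E}$ and the polar orders of $\mathcal{E}$ and $\mathcal{E}^*$ along $D_\alpha$ are governed by the type of the submodule; the inequality \eqref{ordres} in the proof of Lemma~\ref{branchedfoliation} (which says $\operatorname{ord}_{\tilde{D}_\alpha}$ of the $\mathfrak{p}$-block dominates that of the $\mathfrak{g}_-$-block) is exactly the order estimate that forces the second column to have the stated shape — the $(1,2)$-entry must vanish along $z_1 = 0$ (hence the factor $z_1$) because of the transversality of the spiral, while the $(2,2)$-entry is merely holomorphic, and the first column is unconstrained (hence the $*$'s).

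I would organize the argument as: (i) reduce to $(E,\omega_0)$ strongly spiral via Corollary~\ref{tauconnectionstaucondition}, and fix $x \in D_\alpha$ generic (outside the analytic subset $S_\alpha$ of Lemma~\ref{lemmageodesicaffine}) so that there is a holomorphic spiral $\Sigma$ through $x$ transverse to $D_\alpha$, lifting to a distinguished curve $\tilde{\Sigma}$ meeting $\tilde{D}_\alpha$ transversally; (ii) in the frame $\sigma$ over $U$, run the computation of the proof of Lemma~\ref{branchedfoliation} to pin down the polar orders of the connection matrix $A_\nabla$ along $z_1 = 0$, in particular that the component in the $\der{}{z_1}$ direction of the geodesic field has order zero along $\tilde{D}_\alpha$; (iii) convert this into the statement that the module $\mathcal{E}$, as a submodule of $TM(*D)$, is locally of the form $\mathcal{O}_U\der{}{z_1}\oplus \mathcal{O}_U(z_1^{-1}\der{}{z_2})$ (or that the relevant transition to $\operatorname{End}$ picks up exactly one power of $z_1$ in the off-diagonal upper-right slot); (iv) compute $\operatorname{End}(\mathcal{E})\vert_U = \operatorname{Hom}(\mathcal{E},\mathcal{E})\vert_U$ in the basis $(\der{}{z_1},\der{}{z_2})$ and read off the form \eqref{formnottau}. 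The main obstacle will be step (iii): making the bookkeeping of polar orders of $\mathcal{E}$ versus $\mathcal{E}^*$ along a single component precise, and confirming that "spiral" gives exactly one power of $z_1$ (not more, not a logarithmic or non-integer complication) — this is where I expect to lean most heavily on the explicit matrix normal form from the proof of Lemma~\ref{branchedfoliation} and the estimate \eqref{ordres}.
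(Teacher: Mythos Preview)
Your proposal is built on the wrong hypothesis, though the fault lies with the printed statement rather than with you: there is a typo in the proposition. The label (\textit{necessarynottau}), the first sentence of the paper's own proof (``the hypothesis on $\nabla$ implies that the pullback $\tilde D_\alpha$ \dots\ is \emph{invariant} through the $A$-distinguished foliation $\mathcal T_A$''), and the way the result is invoked in the proof of Theorem~\ref{genericity} (where it is applied to a connection that is explicitly \emph{not} spiral) all make clear that the intended hypothesis is $\nabla\in\mathcal A^0_{\mathcal E}\setminus\mathcal A^0_{\mathcal E,\tau}$, not $\nabla\in\mathcal A^0_{\mathcal E,\tau}$.

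This matters because your step~(iii) is exactly where the argument collapses under the stated hypothesis. Transversality of a single spiral to $D_\alpha$ places essentially no constraint on how $\mathcal E$ sits inside $TM(*D)$: it only says that some leaf of some $\mathcal T_A$ crosses $\tilde D_\alpha$, which is generic behaviour. What the paper actually uses is the opposite: if \emph{every} $\mathcal T_A$ with $A\in\mathfrak g_-$ leaves $\tilde D_\alpha$ invariant, then any holomorphic generator $Z$ of $\mathcal T_A$ has vanishing $\der{}{z_1}$-component along $\tilde D_\alpha$. Combined with the order inequality~\eqref{ordres} from the proof of Lemma~\ref{branchedfoliation} (which you correctly identified as the key input), this forces a basis $(\overline Y_1,\overline Y_2)$ of $\mathcal E\vert_U$ in which $\overline Y_1$ lies purely in $\mathcal O_U(*D)\,\der{}{z_2}$. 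The paper then writes the transition matrix $Q$ from this basis to $\bigl(\der{}{z_1},\der{}{z_2}\bigr)$, uses $TM\subset\mathcal E$ to make $Q^{-1}$ holomorphic and extract the sign conditions on the exponents, and finally conjugates an arbitrary holomorphic $2\times2$ matrix by $Q$ to read off~\eqref{formnottau}. Your steps~(ii) and~(iv) are the right machinery; they just need to be driven by invariance rather than transversality.
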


\begin{proof} Let $\nabla$, $D_\alpha$, $x$ and $U$ as in the statement. Let $(E,\omega_0)$ be the unique holomorphic branched affine Cartan geometry inducing $\nabla$ and such that $E$ is the frame bundle of $\mathcal{E}$. The hypothesis on $\nabla$ implies that the pullback $\tilde{D}_\alpha$ of $D_\alpha$ to $E$ is invariant through the $A$-distinguished foliation $\mathcal{T}_A$ (see \eqref{definitionTA}), for any $A\in \mathfrak{g}_-$. Using the proof of Lemma~\ref{branchedfoliation} and denoting by $\big(\tilde{\der{}{z_1}},\tilde{\der{}{z_2}}\big)$ two $P$-invariant holomorphic vector fields on $p^{-1}(U)$ projecting on $\big(\der{}{z_1},\der{}{z_2}\big)$, we get that the $\omega_0$-constant vector fields $Y_1$, $Y_2$ associated with $\mathfrak{e}_1$, $\mathfrak{e}_2$ decompose as
\[ Y_i = z_1^{n_i}g_i \tilde{\der{}{z_1}} + z_1^{m_i}g'_i \tilde{\der{}{z_2}} + Y'_i,\]
 where $Y'_i$ is an element of $\ker({\rm d}p)\big(p^{-1}(U)\big)$, $g_i$, $g'_i$ are invertible or identically vanishing holomorphic functions, and
 \begin{gather}\label{inequalityni}
 0\geq n_i>m_i \qquad \text{or} \qquad g_i =0.\end{gather}
 Now, fix any holomorphic section $\sigma$ of $E$ on $U$ and denote by $\big(\overline{Y}_1,\overline{Y}_2\big)$ the corresponding basis of $\mathcal{E}$ on $U$. These are the projections of $Y_1\circ \sigma$ and $Y_2\circ \sigma$ through $Tp$, so that
\[\overline{Y}_i = z_1^{n_i}g_i \der{}{z_1} + z_1^{m_i}g'_i \der{}{z_2}.\]
 The inequality \eqref{inequalityni} implies that, up to replacing $\sigma$ by $\sigma\cdot b$ for some holomorphic function $b\colon U \longrightarrow P$, we can suppose that $g_1=0$, i.e., the matrix $Q$ of $\big(\overline{Y}_1,\overline{Y}_2\big)$ in $\big(\der{}{z_1},\der{}{z_2}\big)$ is
 \begin{gather*}Q= \begin{pmatrix} 0 & z_1^{n_2}g_2 \\ z_1^{m_1}g'_1 & z_1^{m_2}g'_2 \end{pmatrix} \end{gather*}
 with $g_2$, $g'_1$, $g'_2$, $n_2$ and $m_2$ as before. Since $\mathcal{E}\vert_U$ contains $TU$, the inverse of this matrix must be a holomorphic matrix on $U$.

 This implies the following identities:
 \begin{align}
 n_2+m_2-m_1 \geq 0,\qquad n_2 <0,\qquad m_1 < 0, \qquad
 m_2 \leq 0.\label{conditionsordres}
\end{align}

 Let $\varphi \in \operatorname{End}(\mathcal{E})(U)$ with matrix $\left(\begin{smallmatrix} \alpha & \gamma \\ \beta & \delta \end{smallmatrix}\right)$ in $\big(\overline{Y}_1,\overline{Y}_2\big)$. Then the matrix of the same section in the basis $\big(\der{}{z_1},\der{}{z_2}\big)$ is
 \begin{gather*} Q \begin{pmatrix} \alpha & \gamma \\ \beta & \delta \end{pmatrix} Q^{-1} = \begin{pmatrix} * & g z_1^{n_2-m_2}\\ * & g' z_1^{m_2-m_1} \end{pmatrix} \end{gather*}
 for some holomorphic functions $g$, $g'$ on $U$. Using \eqref{conditionsordres}, we get that this matrix has the desired form.
\end{proof}

\subsection{Intermediate condition and local characterization}
We continue by introducing a subset $\mathcal{A}^0_{\mathcal{E},1}\subset \mathcal{A}_\mathcal{E}^0$, containing the complement of $\mathcal{A}^0_{\mathcal{E},\tau}$ in $\mathcal{A}^0_{\mathcal{E}}$. This subset has the advantage that its elements $\nabla$ can be described by their local Christoffel symbols.

\begin{Definition}\label{intermediate} The set $\mathcal{A}^0_{\mathcal{E},1}$ is the subset of elements $\nabla \in \mathcal{A}^0_\mathcal{E}$, with the following property. For any $x\in D_\alpha$, where $D_\alpha$ is some irreducible component of $D$, there exists a non-constant geodesic $\gamma\colon D(0,\epsilon) \longrightarrow M$ for $\nabla$ with
\[ \gamma(0)=x \qquad \text{and} \qquad \operatorname{Im}(\gamma) \subset D_\alpha.\]
\end{Definition}

\begin{Lemma}\label{caracterisationintermediate} Let $\nabla \in \mathcal{A}^0_\mathcal{E}$. The following properties are equivalent:
 \begin{itemize}\itemsep=0pt
 \item[$(i)$] $\nabla \in \mathcal{A}^0_{\mathcal{E},1}$.
 \item[$(ii)$] For any irreducible component $D_\alpha$ of $D$, any $x\in D_\alpha \setminus \bigcup_{\beta \neq \alpha} D_\beta$, and any open neighborhood~$U$ of $x$ with local coordinates $(z_1,z_2)$ as in Proposition~{\rm \ref{necessarynottau}}, the matrix of $\nabla$ in $\big(\der{}{z_1},\der{}{z_2}\big)$ is of the form
 \[\sum_{i=1,2}{\rm d}z_i \otimes \begin{pmatrix} a_i & c_i \\ b_i & d_i \end{pmatrix} \]
 with
 \begin{align*}
 \begin{cases} c_2 \quad\text{vanishing on $D_\alpha\cap U$,} \\ d_2 \quad \text{holomorphic.} \end{cases}\end{align*}
 \end{itemize}

\end{Lemma}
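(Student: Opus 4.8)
The plan is to move to the branched affine Cartan geometry attached to $\nabla$, reduce $(i)$ to a statement about one explicit curve, and read off condition $(ii)$ by the same kind of pole‑order bookkeeping used in the proofs of Lemma~\ref{branchedfoliation} and Proposition~\ref{necessarynottau}.

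First I would fix $\nabla\in\mathcal{A}^0_\mathcal{E}$ and, via Lemma~\ref{lemmabranchedaffine} and Corollary~\ref{equivalencetorsion}, let $(E,\omega_0)$ be the unique branched holomorphic affine Cartan geometry inducing it with $E$ the frame bundle of $\mathcal{E}$; write $\omega_0=\theta_0\oplus\tilde\omega$ as in \eqref{splitCartan}, so that by Lemma~\ref{branchedfoliation} $\tilde\omega$ is an honest holomorphic $P$-principal connection and $\theta_0$ the associated meromorphic solderform. Fix a component $D_\alpha$, a point $x\in D_\alpha\setminus\bigcup_{\beta\ne\alpha}D_\beta$, and coordinates $(z_1,z_2)$ on $U\ni x$ with $D_\alpha\cap U=\{z_1=0\}$. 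Since $D_\alpha$ is smooth on $U$, a non-constant geodesic $\gamma\colon D(0,\epsilon)\to M$ with $\gamma(0)=x$ and $\operatorname{Im}(\gamma)\subset D_\alpha$ is, after recentering and an invertible reparametrization, the curve $\gamma_0(t)=(0,t)$. Hence $(i)$ at $x$ amounts to ``$\gamma_0$ is a geodesic of $\nabla$'', which by Lemma~\ref{caracterisationbranchedgeodesics} unwinds to: $\gamma_0^\star\nabla$ is the null morphism, or there is a holomorphic $h\not\equiv0$ with $\gamma_0^\star\nabla\bigl(\tfrac1h\,{\rm d}\gamma_0(\der{}{t})\bigr)=0$. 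Equivalently, on the Cartan side, a $\tilde\omega$-horizontal lift $\tilde\gamma_0$ of $\gamma_0$ must be an $A$-distinguished curve for some $A\in\mathfrak{g}_-\setminus\{0\}$ (that is, $\theta_0$ is valued in the line $\mathbb{C}A$ along $\tilde\gamma_0$); since $\tilde\gamma_0$ lies in $\tilde D_\alpha=p^{-1}(D_\alpha)$, by Lemma~\ref{invariantfoliation} this happens for generic $x$ exactly when $\tilde D_\alpha$ is invariant by the distinguished foliation $\mathcal{T}_A$ of \eqref{definitionTA} for some $A\in\mathfrak{g}_-$.

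The computation is the core. Choose a holomorphic section $\sigma$ of $E$ over $U$ with associated basis $(s_1,s_2)$ of $\mathcal{E}|_U$; let $Q$ be the matrix of $(s_1,s_2)$ in $\bigl(\der{}{z_1},\der{}{z_2}\bigr)$, so $Q^{-1}$ is holomorphic because $TM\subset\mathcal{E}$, and let $\sum_i{\rm d}z_i\otimes B_i$ be the matrix of $\nabla$ in $(s_1,s_2)$, holomorphic because $\nabla$ restricts holomorphically to $\mathcal{E}$. The gauge-transformation formula writes the coordinate Christoffel matrices $\Gamma_i$ (with entries $a_i,b_i,c_i,d_i$ as in the statement) as $\Gamma_i=QB_iQ^{-1}+Q\,\partial_i(Q^{-1})$; pulling back by $\gamma_0$ annihilates ${\rm d}z_1$ and sends ${\rm d}z_2$ to ${\rm d}t$, so only $\Gamma_2$ along $\{z_1=0\}$ enters, and likewise only the second columns of $Q$ and $Q^{-1}$ along $\{z_1=0\}$. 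I would then rewrite the condition ``$\tilde\gamma_0$ is $A$-distinguished'' (equivalently the reparametrized geodesic equation) as the vanishing of the component of $\gamma_0^\star\nabla\bigl(\tfrac1h\,{\rm d}\gamma_0(\der{}{t})\bigr)$ transverse to $D_\alpha$ — the $\der{}{z_1}$-direction — together with the holomorphy of the surviving tangential part. Tracking the orders of vanishing when passing from the $\mathcal{E}$-frame, where $\nabla$ is holomorphic, back to the coordinate frame — exactly the bookkeeping of Lemma~\ref{branchedfoliation} and Proposition~\ref{necessarynottau} — this transverse-vanishing condition becomes $z_1\mid c_2$, i.e.\ $c_2$ vanishes on $D_\alpha\cap U$, and the holomorphy condition becomes $d_2$ holomorphic. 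Reading the argument backwards gives $(ii)\Rightarrow(i)$: if $z_1\mid c_2$ and $d_2$ is holomorphic, then a suitable reparametrization of $\gamma_0(t)=(0,t)$ solves the geodesic equation, so $D_\alpha$ carries a non-constant geodesic through the generic $x$.

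Finally I would tidy up the quantifiers. Condition $(ii)$ does not depend on the coordinates adapted to $D_\alpha$: a coordinate change fixing $\{z_1=0\}$ acts on $\Gamma_i$ by a holomorphic gauge transformation that visibly preserves the displayed shape, a short direct check. And ``$z_1\mid c_2$'' together with ``$d_2$ holomorphic'' are closed conditions, so once $(ii)$ holds at one generic point of each irreducible component it holds on all of $D_\alpha$; together with the equivalence above at generic points this matches the ``$x\in D_\alpha$'' appearing in $(i)$ with the ``$x\in D_\alpha\setminus\bigcup_{\beta\ne\alpha}D_\beta$'' appearing in $(ii)$. I expect the main obstacle to be the middle step: giving the geodesic equation an honest meaning for a curve lying inside the polar locus $D_\alpha$ — where the naive pullback of the meromorphic connection on $TM$ collapses — which forces one to work through the branched Cartan connection and through the $\mathcal{E}$-frame, and then to keep careful track of vanishing orders when translating back to $\bigl(\der{}{z_1},\der{}{z_2}\bigr)$.
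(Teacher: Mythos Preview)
Your outline and the paper's proof share the same skeleton: pass to the branched Cartan geometry $(E,\omega_0)$ via Corollary~\ref{equivalencetorsion}, invoke Lemma~\ref{caracterisationbranchedgeodesics}, and analyse a curve $\gamma$ with $\gamma_1\equiv 0$ inside $D_\alpha$. The paper, however, skips your detour through the $\mathcal{E}$-frame and the gauge formula entirely. It simply writes the two components of the reparametrized geodesic equation in the coordinate frame $\bigl(\der{}{z_1},\der{}{z_2}\bigr)$: with $\gamma_1\equiv 0$ the $\der{}{z_1}$-component reads $\tfrac{(\gamma_2')^2}{h_\gamma}\,(c_2\circ\gamma)=0$, forcing $c_2$ to vanish on $D_\alpha\cap U$; the $\der{}{z_2}$-component is the scalar ODE $y'=(d_2\circ\gamma)\,y$ for $y=\gamma_2'/h_\gamma$. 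For $(ii)\Rightarrow(i)$ one just solves this holomorphic ODE and checks Lemma~\ref{caracterisationbranchedgeodesics}. So the computation you flagged as ``the main obstacle'' is in fact short once done in coordinates.

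There is one genuine idea you are missing, and it is precisely the step where your sketch is vaguest. For $(i)\Rightarrow(ii)$ you assert that ``holomorphy of the surviving tangential part'' yields $d_2$ holomorphic via order-bookkeeping through $\Gamma_2=QB_2Q^{-1}+Q\,\partial_2(Q^{-1})$. But nothing in that gauge identity alone controls the $(2,2)$-entry: without further input, $h_\gamma$ is only known to be holomorphic and not identically zero (Lemma~\ref{caracterisationbranchedgeodesics}), so $y=\gamma_2'/h_\gamma$ could be meromorphic and the ODE gives no constraint on the poles of $d_2$. The paper supplies the missing ingredient from the Cartan side: since $\omega_0$ is holomorphic and has constant rank along $\tilde D_\alpha$, the function $h_\gamma$ defined by $\tilde\gamma^\star\omega_0(\der{}{t})=h_\gamma A$ is an \emph{invertible} holomorphic function on $D(0,\epsilon)$. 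Then $y$ is holomorphic, and $d_2\circ\gamma=y'/y$ is holomorphic; since $\gamma'(0)\neq 0$ this forces $d_2$ to have no pole along $D_\alpha\cap U$. Your $\mathcal{E}$-frame route does not, as written, produce this invertibility, and that is the gap.
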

\begin{proof}
We first prove $(i)$ implies $(ii)$. Suppose that $\nabla$ is as in $(i)$, and let $\gamma$ be a non constant geodesic at $x\in D_\alpha$ with $\operatorname{Im}(\gamma)\subset D_\alpha$. Let $U$ and $(z_1,z_2)$ as in $(ii)$ and denote by $\gamma_i = z_i \circ \gamma$, so that $\gamma'_1$ is identically vanishing on $D(0,\epsilon)$ by the hypothesis on $\gamma$. The first line of the generalized geodesic equation from Lemma~\ref{caracterisationbranchedgeodesics} implies that $c_2 \circ \gamma$ must be identically vanishing, since~$\gamma'_2$ is not identically vanishing by hypothesis. In particular, since $D_\alpha \cap U$ is irreducible, $c_2$ must be identically vanishing on this divisor.
Moreover, the second line of the same equation implies that $y=\frac{1}{h_\gamma} \gamma'_2$ is a solution of the differential equation
 \begin{gather}\label{residualgeodesic} y'= ({\rm d}_2\circ \gamma) y. \end{gather}
 Now, let $\tilde{\gamma}$ be the lifting of $\gamma$ to a $A$-distinguished curve of the unique holomorphic branched affine Cartan connection $(E,\omega_0)$ inducing $\nabla$ and with $E=R(\mathcal{E})$. Then $\tilde{\gamma}^\star \omega_0$ is of constant rank. Recall that the holomorphic function $h_\gamma$ in Lemma~\ref{caracterisationbranchedgeodesics} is defined by $\tilde{\gamma}^\star \omega_0\big(\der{}{t}\big) = h_\gamma A$. But $\omega_0$ has constant rank on the pullback $\tilde{D}_\alpha$ of $D_\alpha$, by definition of $D$. Thus, $h_\gamma$ must be an invertible holomorphic function on $D(0,\epsilon)$, so that $y$ is a holomorphic function. Then ${\rm d}_2\circ \gamma$ must be holomorphic, and since $\gamma'(0)\neq 0$, this implies that ${\rm d}_2$ has no pole along $D_\alpha \cap U$. But the poles of $\nabla$ are contained in $D$, so that ${\rm d}_2$ is a holomorphic function on the whole~$U$.

Now we prove $(ii)$ implies $(i)$. Let $\nabla \in \mathcal{A}^0_\mathcal{E}$ and suppose $(ii)$ is satisfied. Let $x \in D_\alpha$, and $(U,(z_1,z_2))$ as above, and suppose moreover that $z_1(x)=z_2(x)=0$. Define $\gamma_1 =0$, and $\gamma_2$ to be a solution of \eqref{residualgeodesic} on $D(0,\epsilon)$ ($\epsilon >0$) with $\gamma_2(0)=0$. Such a solution exists because $d_2$ is holomorphic. Then the unique holomorphic curve $\gamma\colon D(0,\epsilon)$ with $\gamma_i = z_i \circ \gamma$ is a geodesic of $\nabla$ by Lemma~\ref{caracterisationbranchedgeodesics}. By construction, $\operatorname{Im}(\gamma)\subset D_\alpha$, i.e., $\nabla\in \mathcal{A}^0_{\mathcal{E},1}$.
\end{proof}
\subsection{Genericity result}
The set $\mathcal{A}^0_\mathcal{E}$ has the structure of an affine space directed by $\operatorname{End}(\mathcal{E})(M)$. Indeed, suppose there exists $\nabla \in \mathcal{A}^0_\mathcal{E}$. Let $\nabla'$ be any meromorphic connection on $(M,D)$, and
 \begin{gather}\label{parametrizedbranched} \Theta = \nabla' - \nabla, \end{gather}
 which is an element of $\Omega^1_M\otimes \operatorname{End}(TM)(* D)(M) = \Omega^1_M\otimes \operatorname{End}(\mathcal{E})(* D)$. Then it is immediate that $\nabla' \in \mathcal{A}^0_\mathcal{E}$ exactly when $\Theta\in \Omega^1_M\otimes \operatorname{End}(\mathcal{E})(M)$.

Using the above remark and Lemma~\ref{caracterisationintermediate}, we get the following result.

\begin{Theorem}\label{genericity} Let $(M,D)$ be a pair and $\mathcal{E}\subset TM(* D)$ a submodule containing $TM$. Then one and only one of the following assertions is true: \begin{itemize}\itemsep=0pt
 \item[$(a)$] $\mathcal{A}^0_{\mathcal{E},\tau}=\mathcal{A}^0_\mathcal{E}$,
\item[$(b)$] $\mathcal{A}^0_{\mathcal{E},1}=\mathcal{A}^0_\mathcal{E}$.
\end{itemize} \end{Theorem}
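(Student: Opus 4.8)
The plan is to combine three ingredients that are already in place: the affine structure of $\mathcal{A}^0_\mathcal{E}$ over $\bigl(\Omega^1_M\otimes\operatorname{End}(\mathcal{E})\bigr)(M)$ recorded around~\eqref{parametrizedbranched}, the local description of $\mathcal{A}^0_{\mathcal{E},1}$ in Lemma~\ref{caracterisationintermediate}, and the rigidity of $\operatorname{End}(\mathcal{E})$ along $D$ from Proposition~\ref{necessarynottau}. We may assume $\mathcal{A}^0_\mathcal{E}\neq\emptyset$ and $D\neq 0$, the remaining cases being vacuous. First I would record two formal facts. (i) $\mathcal{A}^0_\mathcal{E}=\mathcal{A}^0_{\mathcal{E},\tau}\cup\mathcal{A}^0_{\mathcal{E},1}$: this follows from the inclusion stated just before Lemma~\ref{caracterisationintermediate}, namely that $\mathcal{A}^0_{\mathcal{E},1}$ contains the complement of $\mathcal{A}^0_{\mathcal{E},\tau}$. (ii) $\mathcal{A}^0_{\mathcal{E},\tau}\cap\mathcal{A}^0_{\mathcal{E},1}=\emptyset$: a connection $\nabla\in\mathcal{A}^0_{\mathcal{E},1}$ has, near a generic point of each component $D_\alpha$ and in adapted coordinates $(z_1,z_2)$, Christoffel symbol $c_2$ vanishing on $D_\alpha$ and $d_2$ holomorphic (Lemma~\ref{caracterisationintermediate}); using the constant rank of $\omega_0$ along $\tilde D_\alpha$, as in the proof of that lemma, this forces every geodesic lift meeting $\tilde D_\alpha$ to stay tangent to it, so $D_\alpha$ is invariant by the geodesics of $\nabla$ and $\nabla$ is not spiral. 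Granting (i) and (ii), and since $\mathcal{A}^0_\mathcal{E}\neq\emptyset$, the assertions (a) and (b) are mutually exclusive, and it remains to show at least one holds.

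By (i) it suffices to prove: if $\mathcal{A}^0_{\mathcal{E},1}\neq\emptyset$ then $\mathcal{A}^0_{\mathcal{E},1}=\mathcal{A}^0_\mathcal{E}$. Fix $\nabla_0\in\mathcal{A}^0_{\mathcal{E},1}$ and an arbitrary $\nabla'\in\mathcal{A}^0_\mathcal{E}$, and write $\nabla'=\nabla_0+\Theta$ with $\Theta\in\bigl(\Omega^1_M\otimes\operatorname{End}(\mathcal{E})\bigr)(M)$ as in~\eqref{parametrizedbranched}. By Lemma~\ref{caracterisationintermediate}, to check $\nabla'\in\mathcal{A}^0_{\mathcal{E},1}$ it is enough to verify, near a generic point of each component $D_\alpha$ and in adapted coordinates $(z_1,z_2)$, that the $(1,2)$-Christoffel symbol of $\nabla'$ in $\bigl(\der{}{z_1},\der{}{z_2}\bigr)$ vanishes on $D_\alpha$ and its $(2,2)$-symbol is holomorphic; since $\nabla_0$ already has this property, it is enough that the matrix of $\Theta$ in that basis have its $(1,2)$-entry a multiple of $z_1$ and its $(2,2)$-entry holomorphic. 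Now there are two cases. If $\mathcal{A}^0_{\mathcal{E},\tau}=\emptyset$, then $\mathcal{A}^0_{\mathcal{E},1}=\mathcal{A}^0_\mathcal{E}$ already by (i), and there is nothing to prove. If $\mathcal{A}^0_{\mathcal{E},\tau}\neq\emptyset$, then Proposition~\ref{necessarynottau} applies to $\mathcal{E}$ and gives precisely the normal form~\eqref{formnottau} for the local matrix of every element of $\operatorname{End}(\mathcal{E})$, in particular of $\Theta$; hence $\nabla'\in\mathcal{A}^0_{\mathcal{E},1}$. In both cases $\mathcal{A}^0_{\mathcal{E},1}=\mathcal{A}^0_\mathcal{E}$ (and, by (ii), the second case is in fact vacuous, since a spiral connection would then lie in $\mathcal{A}^0_{\mathcal{E},1}$). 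Assembling: if $\mathcal{A}^0_{\mathcal{E},1}=\emptyset$ then $\mathcal{A}^0_{\mathcal{E},\tau}=\mathcal{A}^0_\mathcal{E}$ and (a) holds; otherwise $\mathcal{A}^0_{\mathcal{E},1}=\mathcal{A}^0_\mathcal{E}$ and (b) holds — and exactly one of them, by mutual exclusivity together with $\mathcal{A}^0_\mathcal{E}\neq\emptyset$.

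Conceptually, the weight of the argument has already been moved into Proposition~\ref{necessarynottau}: the mere possibility of a spiral connection on $\mathcal{E}$ rigidifies the local structure of $\operatorname{End}(\mathcal{E})$ along $D$ into the normal form~\eqref{formnottau}. The only genuinely new observation for Theorem~\ref{genericity} is that, via Lemma~\ref{caracterisationintermediate} and the affine parametrization~\eqref{parametrizedbranched}, $\mathcal{A}^0_{\mathcal{E},1}$ is exactly a coset of the linear subspace of $\bigl(\Omega^1_M\otimes\operatorname{End}(\mathcal{E})\bigr)(M)$ cut out by that same normal form, so that non-emptiness propagates to being the whole space as soon as the normal form holds. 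The step I expect to require the most care is the mutual exclusivity~(ii): one must make sure that an irreducible polar component of $D$ that is swept out by geodesics of $\nabla$ cannot also be crossed transversally by one, which is where the constant rank of $\omega_0$ along $\tilde D_\alpha$ and the branched hypothesis are used in an essential way rather than through a soft dimension count.
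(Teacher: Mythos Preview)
Your argument has a genuine gap, and it stems from the way you invoke Proposition~\ref{necessarynottau}. You apply it under the hypothesis $\mathcal{A}^0_{\mathcal{E},\tau}\neq\emptyset$ (existence of a \emph{spiral} connection), reading the statement literally. But if you look at the proof of Proposition~\ref{necessarynottau}, the hypothesis actually used is the opposite: one assumes $\tilde{D}_\alpha$ is invariant under every $\mathcal{T}_A$ with $A\in\mathfrak{g}_-$, i.e.\ that $\nabla$ is \emph{not} spiral, and this invariance is what forces the order inequality~\eqref{inequalityni} and hence the normal form~\eqref{formnottau}. The word ``spiral'' in the statement of Proposition~\ref{necessarynottau} is a misprint for ``non-spiral''. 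Consequently your Case~2 does not go through as written, and since your Case~1 covers only the situation $\mathcal{A}^0_{\mathcal{E},\tau}=\emptyset$, the implication ``$\mathcal{A}^0_{\mathcal{E},1}\neq\emptyset\Rightarrow\mathcal{A}^0_{\mathcal{E},1}=\mathcal{A}^0_\mathcal{E}$'' is not established.

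The paper's proof is both simpler and consistent with what Proposition~\ref{necessarynottau} actually proves. It argues directly: if $(a)$ fails, pick a non-spiral $\nabla$; this $\nabla$ lies in $\mathcal{A}^0_{\mathcal{E},1}$ by the inclusion preceding Definition~\ref{intermediate}, and (the corrected) Proposition~\ref{necessarynottau} applied to this non-spiral $\nabla$ gives the normal form~\eqref{formnottau} for $\operatorname{End}(\mathcal{E})$. Then for any $\nabla'\in\mathcal{A}^0_\mathcal{E}$ the difference $\Theta=\nabla'-\nabla$ has that form, so by Lemma~\ref{caracterisationintermediate} also $\nabla'\in\mathcal{A}^0_{\mathcal{E},1}$. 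No case split, no appeal to $\mathcal{A}^0_{\mathcal{E},\tau}\neq\emptyset$, and Proposition~\ref{necessarynottau} is invoked exactly where its proof supports it. Your observation that exclusivity (your point~(ii)) is needed for the ``only one'' clause is well taken --- the paper's proof establishes only ``not $(a)\Rightarrow(b)$'' --- but your sketch of (ii) (``this forces every geodesic lift meeting $\tilde D_\alpha$ to stay tangent to it'') is not justified: the conditions on $c_2$, $d_2$ in Lemma~\ref{caracterisationintermediate} constrain the tangential direction only and do not obviously exclude a transverse geodesic.
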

\begin{proof}
Suppose $(a)$ to be false and let us prove $(b)$. By hypothesis, there exists an element $\nabla\in \mathcal{A}^0_\mathcal{E}$ which is not spiral. Fix any element $\nabla'$ of $\mathcal{A}^0_\mathcal{E}$. Fix an irreducible component $D_\alpha$ of~$D$, a point $x\in D_\alpha \setminus \bigcup_{\beta\neq\alpha} D_\beta$ and an open neighborhood $U$ of $x$ with coordinates $(z_1,z_2)$ such that $U\cap D_\alpha = \{z_1=0\}$. The matrix of $\Theta$ from \eqref{parametrizedbranched} in $\big(\der{}{z_1},\der{}{z_2}\big)$ is the difference between the matrices of $\nabla'$ and $\nabla$. It has the form \eqref{formnottau} by Proposition~\ref{necessarynottau}. By Lemma~\ref{caracterisationintermediate}, and since $\nabla$ is already an element of $\mathcal{A}^0_{\mathcal{E},1}$ we get that $\nabla'$ is again an element of $\mathcal{A}^0_{\mathcal{E},1}$, that is $(b)$ is true.
\end{proof}

In particular, in order to prove that a complex manifold $M$ admits a spiral holomorphic branched holomorphic affine connection with poles at $D$, and with associated submodule $\mathcal{E}$, it is sufficient to prove that there exists a holomorphic connection $\nabla$ on $\mathcal{E}$ which does not belong to~$\mathcal{A}^0_{\mathcal{E},1}$, i.e., such that for any irreducible component $D_\alpha$ of $D$, there exists $x\in D_\alpha$ such that no geodesic $\gamma$ of $\nabla$ at $x$ is contained in $D_\alpha$.

\subsection{Example in any dimension}
We now construct an example, for any $n\geq 1$, of a compact complex manifold $M$ of dimension~$n$, equipped with a submodule $\mathcal{E}\subset TM(* D)$ with $TM\subset \mathcal{E}$ and an object $\nabla \in \mathcal{A}^0_{\mathcal{E},\tau}$. Namely, $M$ are Hopf manifolds and $\nabla$ is constructed from branched coverings between these manifolds and from the holomorphic affine structure coming from their universal covering.

Pick any $\lambda \in [0,1]$, and define $\Gamma'$ and $\Gamma$ to be respectively the abelian groups spanned by the linear automorphisms $\lambda^2 {\rm Id}_{\mathbb{C}^n}$ and $(z_1,z_2,\dots,z_n)\mapsto \big(\lambda z_1, \lambda^2 z_2,\dots,\lambda^2 z_n\big)$ of $\mathbb{C}^n$. Let $\tilde{M}=\mathbb{C}^n \setminus \{0\}$, and let $M=\Gamma \backslash \tilde{M}$ and $M' = \Gamma' \backslash \tilde{M}$. $M$ and $M'$ are Hopf manifolds associated with~$\Gamma$ and $\Gamma'$, so these are complex compact manifolds. Since $\Gamma'$ is a subgroup of the affine group of~$\mathbb{C}^n$, the canonical affine structure $\nabla_0$ of $\mathbb{C}^n$ \big(restricted to $\tilde{M}$\big) descends as a holomorphic affine connection $\nabla'$ on $M'$.

The map $\tilde{f}\colon \tilde{M} \longrightarrow \tilde{M}$ given by $\tilde{f}(z_1,\dots,z_n) =\big(z_1^2,\dots,z_n\big)$ is equivariant for the actions of~$\Gamma$ and $\Gamma'$. Thus we obtain a map $f\colon M\longrightarrow M'$ defined by the commutative diagram\vspace{-1.5mm}
 \begin{gather*}
\xymatrix{ \tilde{M} \ar[r]^{\tilde{f}} \ar[d] & \tilde{M}\ar[d] \\ M \ar[r]_{f} & M'. }\vspace{-1.5mm}
\end{gather*}
By construction, $f$ is a double covering with ramification locus the divisor $D$ of $M$ obtained as the quotient of $\tilde{D}=\{z_1=0\}$ by the action of $\Gamma$. The pullback $\nabla=f^\star \nabla'$ is an object of~$\mathcal{A}^0_\mathcal{E,\tau}$ where $\mathcal{E}=\mathcal{O}_M\otimes_{f^{-1}\mathcal{O}_{M'}} f^{-1}TM'$. Indeed, by construction, $\nabla$ pulls back to $\mathbb{C}^n$ (first to $\tilde{M}$, then to $\mathbb{C}^n$ by the Hartog's extension theorem) as the $\Gamma$-invariant meromorphic affine connection $\tilde{\nabla} = \tilde{f}^\star \nabla_0$. Now, the curve $\tilde{\Sigma} =\{z_2 = \dots = z_n = 0\}$ projects onto itself through $\tilde{f}$, and is a geodesic for $\nabla_0$. Thus, its projection $\Sigma$ on $M$ is a geodesic for $\nabla$, and by construction~$\Sigma$ intersects $D$ exactly at one point, namely the class of $(0,\dots,0)$. Note that these examples are flat, in the sense that the corresponding meromorphic Cartan geometries are flat.
However, we mention the existence of non-flat examples due to the following construction, by S.~Cantat in \cite[Example 5.3]{Cantat} of a complex compact manifold $M$, equipped with a holomorphic parallelism with non-trivial structure constants, and with a multiple branched cover $f\colon M \longrightarrow M$ as a self-map.

 Let $H_3$ be the complex Heisenberg group, that is the subgroup of ${\rm SL}_3(\mathbb{C})$ of upper-triangular matrices with ones on the diagonal. The subgroup $\Gamma:=H_3(\mathbb{Z}[i])$ of elements with coefficients in~$\mathbb{Z}[i]$ is a cocompact lattice of $H_3$. The quotient $M=H_3/\Gamma$ is then a complex compact manifold. It is equipped with the holomorphic parallelism $\big(\overline{Z}_1,\overline{Z}_2,\overline{Z}_3\big)$ obtained from any basis of right-invariant holomorphic vector fields on $H_3(\mathbb{C})$. We denote by $\nabla'$ the unique holomorphic affine connection on $M'=M$ such that $\nabla'\big(\overline{Z}_i\big)=0$. By construction, the torsion of $\nabla'$ is not zero. It is proved in \cite[Example 5.3]{Cantat} that there exists a finite surjective morphism $f\colon M \longrightarrow M'$ with ramification locus a non-trivial divisor $D$ of $M$. We can reproduce the constructions used with the Hopf manifolds to obtain an object $\nabla=f^\star \nabla'$ of $\mathcal{A}^0_{\mathcal{E}}$ where $\mathcal{E}= \mathcal{O}_M\otimes_{f^{-1}\mathcal{O}_{M'}} f^{-1} TM'$. Then we use the following lemma.

 \begin{Lemma} Let $f\colon M\longrightarrow M'$ be a branched cover between two complex manifolds $M$, $M'$, with ramification locus $D\subset M$ $($we denote $D'=f_*(D))$, and $\nabla' \in \mathcal{A}^0_{\mathcal{E}',\tau}$ where $\mathcal{E}'$ is any submodule of $TM'(* D')$ as in Definition~{\rm \ref{branchedaffine}}. Then the pullback $\nabla=f^\star \nabla'$ is an object of $\mathcal{A}^0_{\mathcal{E},\tau}$ where~$\mathcal{E}=\mathcal{O}_M \otimes_{f^{-1}\mathcal{O}_{M'}} f^{-1} \mathcal{E}'$. \end{Lemma}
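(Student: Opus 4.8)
The plan is to verify the two properties defining membership in $\mathcal{A}^0_{\mathcal{E},\tau}$ in turn: that $\nabla=f^\star\nabla'$ is a holomorphic branched affine connection on $(M,D)$ with associated submodule $\mathcal{E}$, and that it is a spiral connection.

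For the first property I would argue in the local normal form of the branched cover. Around a generic smooth point of an irreducible component $D_\alpha$ of $D$, the map $f$ reads $(z_1,\dots,z_n)\mapsto(z_1^k,z_2,\dots,z_n)$ in adapted coordinates, with $D_\alpha=\{z_1=0\}$ mapping onto the component $D'_\beta=f(D_\alpha)=\{w_1=0\}$ of $D'$. This gives $\mathcal{O}_M\otimes f^{-1}\mathcal{O}_{M'}(*D')=\mathcal{O}_M(*D)$, so the pullback of Definition~\ref{pullback} is a genuine meromorphic connection $\nabla=f^\star\nabla'$ on the locally free rank $n$ module $\mathcal{E}=\mathcal{O}_M\otimes_{f^{-1}\mathcal{O}_{M'}}f^{-1}\mathcal{E}'$ over $(M,D)$. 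Applying $\mathcal{O}_M\otimes f^{-1}(-)$ to $TM'\subset\mathcal{E}'\subset TM'(*D')$ gives $\mathcal{O}_M\otimes f^{-1}TM'\subset\mathcal{E}\subset(\mathcal{O}_M\otimes f^{-1}TM')(*D)$, so the differential ${\rm d}f\colon TM\to\mathcal{O}_M\otimes f^{-1}TM'$, which is injective because it is an isomorphism off $D$, extends to an isomorphism $\phi_0\colon TM(*D)\overset{\sim}{\longrightarrow}\mathcal{E}(*D)$ — in the above coordinates it is $\operatorname{diag}(kz_1^{k-1},1,\dots,1)$, invertible over $\mathcal{O}_M(*D)$ — under which $\mathcal{E}$ is identified with a submodule of $TM(*D)$ containing $TM$. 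Finally, the pullback of a meromorphic connection is computed by pulling back its connection matrices through $f_\#$, so $f^\star\nabla'$ restricts to a holomorphic connection on $\mathcal{E}$ exactly as $\nabla'$ does on $\mathcal{E}'$. Hence $(\phi_0,\mathcal{E},f^\star\nabla')$ is an object of $\mathcal{F}_{\rm conn}^0$ and, through the equivalence of Corollary~\ref{equivalencetorsion}, $\nabla$ is a holomorphic branched affine connection on $(M,D)$ whose associated submodule, unique by Lemma~\ref{lemmabranchedaffine}, is $\mathcal{E}$; thus $\nabla\in\mathcal{A}^0_\mathcal{E}$.

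For the second property I would lift transverse geodesics. Fix an irreducible component $D_\alpha$ of $D$ and let $D'_\beta=f(D_\alpha)$, an irreducible component of $D'=f_*(D)$. Since $\nabla'$ is spiral, $D'_\beta$ is not invariant by the geodesics of $\nabla'$; as in the proof of Proposition~\ref{branchedtaucondition} (via Corollary~\ref{tauconnectionstaucondition} and Lemma~\ref{invariantfoliation}(2)) there is then a Zariski-dense set of points of $D'_\beta$ through each of which passes a geodesic $\Sigma'$ of $\nabla'$ meeting $D'$ only there and transverse to $D'_\beta$. Choose such a point $y'_0$ off the other components of $D'$ and over which $f$ has the normal form above, pick $x_0\in D_\alpha$ over $y'_0$, and write a transverse parametrization $\gamma'(s)=(s,\gamma'_2(s),\dots,\gamma'_n(s))$ of $\Sigma'$ near $y'_0$. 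The naive fibrewise lift $s\mapsto(s^{1/k},\dots)$ is not holomorphic, so I reparametrize and set $\gamma(u):=(u,\gamma'_2(u^k),\dots,\gamma'_n(u^k))$, a holomorphic curve with $f\circ\gamma=\gamma'\circ\psi$, $\psi(u)=u^k$. Then $\gamma'\circ\psi$ has the same image $\Sigma'$ and is still a geodesic of $\nabla'$, and by functoriality of the pullback $\gamma^\star\nabla=\gamma^\star f^\star\nabla'=(\gamma'\circ\psi)^\star\nabla'$, which is not the null morphism since $\Sigma'\not\subset D'$. Combining the chain rule ${\rm d}(f\circ\gamma)=(\gamma^*{\rm d}f)\circ{\rm d}\gamma$ with the order $k-1$ pole of $({\rm d}f)^{-1}$ along $\{z_1=0\}$ and the vanishing order $k-1$ of $\psi'$ at $0$, one sees that ${\rm d}\gamma(\partial_u)$ equals, up to an invertible holomorphic factor along the curve, the $\gamma$-pullback of $({\rm d}f)^{-1}{\rm d}\gamma'(\partial_s)$; since $\gamma'$ is a geodesic of $\nabla'$, the characterization of Lemma~\ref{caracterisationbranchedgeodesics} then shows that $\gamma$ satisfies the generalized geodesic equation for $\nabla$, so $\Sigma:=\operatorname{Im}(\gamma)$ is a geodesic of $\nabla$. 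As $z_1(\gamma(u))=u$, this $\Sigma$ meets $D_\alpha$ only at $x_0$ and avoids the other components nearby, so $D_\alpha$ is not invariant by the geodesics of $\nabla$. Since $\alpha$ was arbitrary, $\nabla$ is spiral in the sense of Definition~\ref{tauconnection}, hence $\nabla\in\mathcal{A}^0_{\mathcal{E},\tau}$.

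The main obstacle I expect is the lifting step: one must pass through the branched reparametrization $u\mapsto u^k$ to turn the fibrewise lift of a geodesic transverse to the branch divisor into a holomorphic curve, and then show that, in spite of the pole of $({\rm d}f)^{-1}$ along the ramification divisor, this curve still solves the geodesic equation of the pulled-back connection. This is precisely the situation handled by the dichotomy of Lemma~\ref{caracterisationbranchedgeodesics} between $\gamma^\star\nabla$ being the null morphism and $\gamma$ satisfying the reparametrized geodesic equation; the relevant holomorphic factor stays invertible because, near a generic point of $D$, the pulled-back connection has constant rank along the polar divisor. By contrast the first property is essentially bookkeeping with Definitions~\ref{pullback} and~\ref{branchedaffine} and the equivalence of Corollary~\ref{equivalencetorsion}.
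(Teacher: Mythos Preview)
Your proof is correct and follows essentially the same strategy as the paper: verify $\nabla\in\mathcal{A}^0_\mathcal{E}$ via the equivalence with branched Cartan geometries, then lift a geodesic of $\nabla'$ transverse to $D'$ through the branched cover and check it is a geodesic of $\nabla$ using Lemma~\ref{caracterisationbranchedgeodesics}. The paper does the second step more tersely, simply setting $\Sigma=f^{-1}(\Sigma')$ and observing that near any preimage $x_0$ of $x'_0$ this is a geodesic of $\nabla$ meeting $D_\alpha$ only at $x_0$; your explicit parametrization $\gamma(u)=(u,\gamma'_2(u^k),\dots,\gamma'_n(u^k))$ is precisely the local description of that preimage, so the ``branched reparametrization obstacle'' you anticipate is nothing more than writing $f^{-1}(\Sigma')$ in coordinates. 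Your extra care in choosing $\Sigma'$ transverse to $D'_\beta$ at a generic point (via Lemma~\ref{invariantfoliation}) is not wasted: it guarantees that $f^{-1}(\Sigma')$ is smooth near $x_0$, a point the paper passes over silently.
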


  \begin{proof} The fact that $\nabla$ is an object of $\mathcal{A}^0_{\mathcal{E}}$ is clear from the equivalence Corollary~\ref{equivalencetorsion} and the definition of a holomorphic branched Cartan geometry. Now, let $D_\alpha$ be any irreducible component of $D$, $D'_\alpha$ its projection through $f$. Since $\nabla'$ is a spiral branched holomorphic affine connection, by Lemma~\ref{lemmabranchedaffine} there exist $x'_0 \in D'_\alpha$ and a geodesic $\Sigma'$ for $\nabla'$ such that $\Sigma'\cap D'_\alpha = \{x'_0\}$. Let $x_0$ be any point of the fiber $f^{-1}(x'_0) \subset D_\alpha$, and $\Sigma=f^{-1}(\Sigma')$. Using the characterization of Lemma~\ref{caracterisationbranchedgeodesics} and the definition of the pullback, we obtain that $\Sigma$ is a geodesic of $\nabla$. By construction, $\Sigma \cap D_\alpha$ is the finite set of points $f^{-1}(x'_0)$, so we can consider a neighborhood $U$ of~$x_0$ such that $\Sigma\cap U$ is a geodesic of $\nabla$, intersecting $D_\alpha$ exactly at $x_0$. So $\nabla \in \mathcal{A}^0_{\mathcal{E},\tau}$.
\end{proof}

\vspace{-1mm}

\subsection*{Acknowledgements}
We are very grateful to the referees for their helpful work and their constructive comments.
We would like also to thank Sorin Dumitrescu for introducing the author to the Cartan geometries. We finally thank the Laboratoire de Math\'ematiques Blaise Pascal, in Clermont-Ferrand, for its hospitality.

\pdfbookmark[1]{References}{ref}
\LastPageEnding

\end{document}